\documentclass{article}
\usepackage[utf8]{inputenc}

\usepackage{graphicx}
\usepackage{amsmath}
\usepackage{amssymb}
\usepackage{amsthm}
\usepackage{amsfonts}
\usepackage{tikz-cd}
\usepackage{mathabx}
\usepackage{tikz}
\usepackage{hyperref}
\usepackage[shortlabels]{enumitem}
\usepackage{dsfont}
\usepackage{wrapfig}
\usepackage{quiver}

\usepackage[strict]{changepage}

\DeclareMathAlphabet{\mathpzc}{OT1}{pzc}{m}{it}
\newcommand{\cat}[1]{\mathpzc{#1}}

\newcommand{\Ob}{\cat{Ob}}

\DeclareMathOperator{\Hom}{Hom}

\DeclareMathOperator{\Aut}{Aut}

\DeclareMathOperator{\Span}{Span}
\DeclareMathOperator{\CSpan}{\overline{Span}}

\DeclareMathOperator{\ev}{ev}
\DeclareMathOperator{\Rec}{Rec}

\newcommand{\isomto}{\xrightarrow{\sim}}

\newcommand{\GL}{\textrm{GL}}

\newcommand{\loc}{\textrm{loc}}

\newcommand{\Loc}{\textrm{Loc}}

\newcommand{\id}{\textrm{id}}

\DeclareMathOperator{\coim}{coim}
\DeclareMathOperator{\im}{im}
\DeclareMathOperator{\coker}{coker}

\DeclareMathOperator*{\esssup}{ess.sup}

\newcommand{\Q}{\mathbb{Q}}
\newcommand{\N}{\mathbb{N}}

\newcommand{\R}{\mathbb{R}}
\newcommand{\C}{\mathbb{C}}

\newcommand{\F}{\mathbb{F}}
\newcommand{\T}{\mathbb{T}}

\newcommand{\Ac}{\mathcal{A}}
\newcommand{\Bc}{\mathcal{B}}
\newcommand{\Cc}{\mathcal{C}}

\newcommand{\Fc}{\mathcal{F}}
\newcommand{\Gc}{\mathcal{G}}

\newcommand{\Lc}{\mathcal{L}}
\newcommand{\Mc}{\mathcal{M}}

\newcommand{\Pc}{\mathcal{P}}

\newcommand{\Sc}{\mathcal{S}}

\newcommand{\Abb}{\mathbb{A}}
\newcommand{\BB}{\mathbb{B}}

\newcommand{\EE}{\mathbb{E}}

\newcommand{\KK}{\mathbb{K}}

\title{A Categorical Framework for the Direct Integration of Banach Spaces}
\author{Daniel Funck, Giacomo Gavelli}
\date{}

\begin{document}
\newtheorem{theorem}{Theorem}[subsection]
\newtheorem{lemma}[theorem]{Lemma}
\newtheorem{claim}[theorem]{Claim}
\newtheorem{corollary}[theorem]{Corollary}
\newtheorem{prop}[theorem]{Proposition}
\newtheorem{conjecture}[theorem]{Conjecture}
\newtheorem*{theorem*}{Theorem}

\newtheorem{definition}[theorem]{Definition}
\newtheorem*{nameddefinition}{Definition of Ext}
\theoremstyle{remark}
\newtheorem{remark}{Remark}[subsection]
\newtheorem*{remarks}{Remarks}
\newtheorem*{warning}{Warning}
\newtheorem*{aside}{Aside}
\newtheorem{calc}[theorem]{Calculation}

\theoremstyle{remark}
\newtheorem{example}{Example}
\newtheorem{counterexample}[example]{Counterexample}
\newtheorem*{terminology}{Terminology}

\maketitle
\begin{abstract}
    We construct a categorical framework in which we can view the direct integral as a functor taking as input objects in a certain category of Abstract Banach bundles and outputting a Banach space. We do this by constructing a quasi-abelian category of abstract Banach bundles, and we prove certain foundational and theoretical results about this category and its objects. As a consequence, we are able to define a notion of direct integral of sheaves.
\end{abstract}
{\small\tableofcontents}

\section{Introduction}

Throughout mathematics, it is often useful to pass freely between collections of `small' objects and `large' universal objects, and representation theory is no exception. When $G$ is a finite group, the representations of $G$ correspond to the modules of the non-commutative ring $\C[G]$, and furthermore, there is a decomposition 
\[\C[G]\cong \bigoplus_{\pi}\pi^{\dim(\pi)}\]
so that all irreducible representations are found inside the regular representation $\C[G]$, considered as a left (equivalently right) module. 

When one turns one's attention to locally compact topological groups $G$ such as $\GL_n(\R)$ or $\GL_n(\Q_p)$, two important differences arise. Firstly, there are now typically uncountably many irreducible representations. Secondly, the group algebra $\C[G]$ becomes an unwieldy and unreasonable object due to its independence from the natural topology on $G$. Instead, one takes inspiration from harmonic and functional analysis, and studies representations such as that on $L^p(G)$. These representations play a similar role to the regular representation $\C[G]$ in the representation theory of finite groups. 

Introduced by von Neumann in the 1940s, the direct integral is a fundamental tool from the study of harmonic analysis and the representation theory of locally compact groups. It generalises direct sums in the same way integration generalises finite summation. Consequently, direct integrals provide a natural language for decomposing many of the analytic objects arising in representation theory.
A fundamental example is provided by the Plancherel decomposition of the left regular representation $L^2(G)$. If $G$ is a ``nice" locally compact group, then the Plancherel theorem may be interpreted as an isomorphism of unitary representations
$$L^2(G)\cong\int_{\widehat{G}}^\oplus \pi d\mu_{Pl},$$
where $\widehat G$ denotes the unitary dual of $G$, equipped with the Plancherel measure.

This direct integral is constructed as follows. Given a family of Hilbert spaces $(H_x)_{x\in M}$ over a measure space $(M,\mu)$:
\begin{enumerate}
    \item Choose a family of vectors $e_i(x)\in H_x$ indexed on $i\in I$ and $x\in M$ such that, for each $x\in M$, the closed span of $\{e_i(x)\}_{i\in I}$ is $H_x$;
    \item Declare a family $v_x\in H_x$ of vectors to be measurable if for all $i\in I$ the function $x\mapsto \langle v_x, e_i(x)\rangle$ is measurable;
    \item Define $\int_M^\oplus H_x d\mu$ as the space of all measurable families $v_x$ whose norm function $x\mapsto \|v_x\|_{H_x}$ lies in $ L^2(M,\mu)$.
\end{enumerate}
This construction of the direct integral relies heavily on the inner product structures on $H_x$ 
and therefore cannot apply to more general representations on Banach or Fréchet spaces. 
Admissible irreducible representations do not necessarily carry $G$-invariant Hilbert space structures, and sometimes the most natural topology on these spaces arises in other fashions. One nice example is the moderate growth topology defined in \cite{Cassel89}. We are therefore interested in generalising the direct integral to the more general setting of Banach spaces, to allow the study of a larger toolbox of examples such as $L^p(G)$.

The first question that this paper aims to answer: `given a family of Banach spaces $B_x$ indexed by some measure space $(M,\mu)$, can one define a sensible notion of direct integral $\int^{\oplus} B_xd\mu(x)$?'

In the above construction of Hilbert direct integrals, one can (and often does) choose the collection $e_i(x)$ so that the non-zero elements $\{e_i(x)\}_{i\in I}$ form an orthonormal basis of $H_x$. This orthonormal basis plays two roles in this construction. The first, a basis of $H_x$. The second, a functional $\langle \cdot,e_i(x)\rangle$ on $H_x$ to test $v_x$ for measurability. In the case of (Archimedean) Banach spaces, to define the construction, we replace this notion of orthonormal basis  with the notion of Markushevich bases. These consist of biorthonormal pairs $(b_i,b^*_i)_{i\in I}$ satisfying some additional properties, and work well for our purposes because every separable Banach space admits such a basis.

Changing gear for a moment, consider the following. Let $F$ be a local field, either Archimedean or non-Archimedean.
The local Langlands correspondence for $\GL_n$, proven by Langlands for Archimedean fields (see e.g. \cite{KnappLLC}) and by Harris-Taylor \cite{HT01} for $p$-adic fields, stipulates the existence of a bijective map 
\begin{equation*}
    \left\{
    \begin{array}{c}
        \textrm{admissible representations} \\ 
        \textrm{of }\GL_n/{F}
    \end{array}
    \right\}_{/\cong}\xrightarrow[]{\Rec} \left\{
    \begin{array}{c}
    \text{`semisimple' representations }\\
    \rho:W_F\to \GL_n(C)
    \end{array}
    \right\}_{/\GL_n-\textrm{conjugacy}}
\end{equation*}

Over the last 30 years, this correspondence, and its `global' analogue, have played a central role in the solutions of many problems, most famously the proof of Fermat's last theorem. There have also been efforts to give this correspondence additional structure, in the form of a functor
\[\Rec:\cat{Rep}^{\textrm{adm}}(G)\to \cat{Sh}(\Loc_{{}^LG})\] 
that assigns each admissible representations a certain kind of sheaf on some space of local parameters. 
This functorial approach gives us the opportunity to extend the Langlands correspondence to not only non-irreducible admissible representations, but also non-admissible representations such as $L^2(G)$ in the following manner:
\[\begin{tikzcd}
	{L^2(G)} && {??} \\
	{\int^{\oplus}_{\hat{G}}\pi}  && {\int^{\oplus}_{\hat{G}}\Rec(\pi)}
	\arrow[dotted, from=1-1, to=1-3]
	\arrow["{\textrm{disintegration}}"', from=1-1, to=2-1]
	\arrow["\Rec"', from=2-1, to=2-3]
	\arrow["{\textrm{integration}}"', from=2-3, to=1-3]
\end{tikzcd}\]

Of course, this schema begs the question: `What does it mean to take a direct integral of sheaves?'

The natural way to address this issue is to define the direct integral on both the objects and the restriction maps between them.
Our perspective throughout is therefore that direct integration should be regarded not merely as a construction on objects but as a functor. This requires a category whose morphisms encode measurable families of bounded operators and whose algebraic structure is rich enough for homological arguments. The principal achievements of this paper are the construction of two categories of measurable and bounded Banach families, $\cat{mBan}^M$ and $\cat{bBan}^M$,
and the proof that these are quasi-abelian.

We can then take the following intuitive approach to defining direct integrals on sheaves.
Suppose that $(\mathcal F_x)_{x\in M}$ is a measurable family of Banach-valued sheaves on a topological space $X$. For every open set $U\subseteq X$, the family of Banach spaces $\{\mathcal{F}_x(U)\}_{x\in M}$
may be integrated, producing a Banach space
$$\int_M^\oplus \mathcal{F}_x(U)d\mu$$
for each $U$ and, for each $V\subseteq U$, a restriction map $\int_M^\oplus \mathcal{F}_x(U)d\mu\to \int_M^\oplus \mathcal{F}_x(V)d\mu$, which together produce a presheaf.

This raises the question of whether the assignment
$$U\mapsto \int_M^\oplus \mathcal{F}_x(U)d \mu$$
again defines a sheaf.
A simple example illustrates why, in general, it does not. Let $X=M=\R$, and for each $x\in\R$ let $\mathcal F_x$ denote the skyscraper sheaf supported at $x$. Then
$$\int_\R^\oplus\mathcal{F}_x(U)dx\cong L^2(U),$$
so that the direct integral recovers one of the fundamental presheaves of functional analysis (gluing of $L^2$ functions may not be in $L^2$). Understanding the failure of the direct integral to produce a sheaf therefore amounts to understanding the categorical properties of bounded Banach families together with the exactness properties of direct integration. We address this question fully in Proposition \ref{prop: notsheaf} and the subsequent remark. This question is closely connected with representation theory. Locally constant sheaves play a central role in geometric representation theory and in categorical approaches to the Langlands programme. In a companion paper, the first author applies the theory developed here to establish a categorical Langlands correspondence for separable (not necessarily admissible) unitary representations.

The main goals of this paper are twofold. First, the construction of categories $\cat{mBan}^M$ and $\cat{bBan}^M$ whose objects consist of measurable families of Banach spaces, which one can dub `Abstract Banach Bundles' and whose morphisms are measurable (resp. measurable and uniformly bounded) families of morphisms between them. Second, the construction of `exact'\footnote{To be defined more precisely in Section 3} functors $\int^{\oplus,p}_M:\cat{bBan}^M\to \cat{Ban}$ which generalise the notion of direct integral. After constructing these categories, we prove some results about these categories. Among these is the very important result:

\begin{theorem*}({Theorems \ref{Thm: mBan QA} and \ref{thm: bBan QA}})
    The categories $\cat{mBan}^M$ and $\cat{bBan}^M$ are quasi-abelian.
\end{theorem*}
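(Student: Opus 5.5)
We have to guess the definitions, since the paper only gives an outline here. I take the objects of $\cat{mBan}^M$ to be families $(B_x)_{x\in M}$ of separable Banach spaces equipped with a measurable structure, for instance a measurable field of Markushevich bases or a space of measurable sections. Morphisms are families $f_x:B_x\to C_x$ that are measurable, meaning they send measurable sections to measurable sections, and are probably identified when they agree $\mu$-almost everywhere. In $\cat{bBan}^M$ we additionally require $\operatorname{ess\,sup}_x\|f_x\|<\infty$.

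The plan is to check the definition of a quasi-abelian category directly: additive, has all kernels and cokernels, strict epimorphisms are stable under pullback, and strict monomorphisms are stable under pushout. The strategy is to do everything fibrewise in $\cat{Ban}$, which is known to be quasi-abelian, and then check that measurability and boundedness survive.

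First, additivity. Fibrewise direct sums with the sum norm, equipped with the product measurable structure, give biproducts, and sums of measurable (bounded) families are again measurable (bounded).

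Second, kernels and cokernels. For $f:B\to C$, take the fibrewise kernel $\ker f_x\subseteq B_x$ with the measurable structure induced from $B$. For the cokernel, take $C_x/\overline{\im f_x}$ with the quotient norm. I must show these are objects. The difficulty is that the induced measurable structure needs a measurable generating family. I would build one from a countable dense family of measurable sections $s_n$ of $B$, and measurable selection is the key tool here. For the kernel, I would approximate elements of $\ker f_x$ using measurable sections of the form $s_n$ corrected by measurable choices. For the cokernel this is easy: the images $[t_n(x)]$ of a dense measurable family $t_n$ of $C$ are still dense in the quotient, and the quotient norms $x\mapsto \operatorname{dist}(t(x),\overline{\im f_x})$ are measurable. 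They are infima over the countable family $f_x(s_n(x))$, which is dense in the image. I then verify the universal properties. In $\cat{mBan}^M$, an induced map into the kernel or out of the cokernel is measurable because measurability is tested by the same functionals. In $\cat{bBan}^M$, the fibrewise norms are bounded by those of the original morphism, so uniform bounds persist. It also follows that strict monos and epis are exactly the fibrewise ones: closed-range embeddings and quotient maps, allowing for almost-everywhere exceptions. For $\cat{bBan}^M$ there is an extra wrinkle. A strict epimorphism should probably have uniformly bounded constants in the open mapping theorem, because the canonical map $\operatorname{coim}\to\operatorname{im}$ must be an isomorphism there, with an inverse that is essentially bounded.

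Third, the stability axioms. Given a strict epi $p:B\to C$ and $g:D\to C$, the pullback is the kernel of $B\oplus D\to C$. Fibrewise it is the $\cat{Ban}$ pullback, which is a strict epi onto $D$ in each fibre. The remaining task is to show the lifting is measurable, and in $\cat{bBan}^M$ uniformly bounded, with constants controlled by those of $p$. The pushout of a strict mono is dual: it is the cokernel of $B\to C\oplus D$, and closedness of ranges follows fibrewise with uniform constants.

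I expect the main obstacle to be the kernel step: proving that the kernel family carries a genuine measurable structure, namely a countable family of measurable sections that is dense in each fibre. In the bounded category a second difficulty is that the norms of inverses must be uniform. I would handle the kernel with a measurable selection theorem (Kuratowski–Ryll-Nardzewski) applied to the closed-valued correspondence $x\mapsto \ker f_x\cap(\text{ball around } s_n(x))$, which is measurable since $f$ is.
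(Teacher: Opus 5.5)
\textbf{Verdict: there is a genuine gap, in the notion of morphism.}

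Your overall architecture matches the paper's. Kernels and cokernels are built fibrewise, pullbacks and pushouts are written as kernels and cokernels of maps between biproducts, and the axioms are then inherited from $\cat{Ban}$. Your handling of $\cat{bBan}^M$ is correct and more explicit than the paper's: strictness there means uniform open-mapping constants (Lemma~\ref{lem: uniformly strict}), and you check that these constants survive pullback and pushout.

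In the paper a morphism is not a family that sends measurable sections to measurable sections. By Definition~\ref{Def: morphisms} it is an almost-everywhere limit, in each fibre's operator norm, of step basic families. Your justification of the universal properties, ``measurability is tested by the same functionals'', gives at best measurability of the matrix coefficients $c_j^*(x)T_xb_i(x)$. The remark following the definition of weakly measurable morphisms exhibits a family, $t\mapsto \mathds{1}_{A_t}$, with measurable coefficients that is not a morphism; one checks it even sends measurable sections to measurable sections.

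So, with the paper's definitions, your plan never shows that the following are morphisms at all:
\begin{itemize}
\item the fibrewise inclusion $\iota_x:\ker T_x\hookrightarrow B_x$;
\item the fibrewise quotient maps;
\item the induced factorisations.
\end{itemize}
Similarly, reading off strictness fibrewise requires the pointwise inverse of $\coim\to\im$ to be a morphism, which the paper handles through Proposition~\ref{Prop: pointwise iso implies iso}. The paper treats the key instance in the last step of Lemma~\ref{Lem: ker and coker}. It approximates each kernel basis vector $\tilde\kappa_i(x)$ by step families $\kappa^n_i(x)\in\Span\{b_j(x)\}$ (Lemma~\ref{Lem: Rapid convergence}). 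It then uses Egorov's theorem to get $\|\kappa^n_i(x)-\tilde\kappa_i(x)\|_x<2^{-ni}$ off a set of small measure, and shows that the step basic maps $\tilde\kappa_i(x)\mapsto\kappa^n_i(x)$ converge to $\iota_x$.

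Two further points concern the objects themselves.

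\emph{The cokernel is not ``easy''.} An object of $\cat{mBan}^M$ carries an M-basis. A pointwise dense countable family of measurable sections must therefore still go through the measurable Gram--Schmidt procedure of Lemma~\ref{Lem: Gram--Schmidt}, which also needs a measurable total family of functionals on each fibre. For the kernel the restricted $b_i^*(x)$ serve; the paper uses the pairs $(\kappa_i(x),b_i^*(x))$. On $C_x/\overline{\im f_x}$, however, you need functionals lying in $(\im f_x)^{\perp}$. The classes $[t_n(x)]$ with measurable quotient norms do not yet form an object.

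\emph{The selection step is not set up correctly.} Kuratowski--Ryll-Nardzewski needs a fixed Polish target and a weakly measurable correspondence, and neither is available as stated:
\begin{itemize}
\item the fibres $B_x$ vary, so there is no fixed Polish target;
\item $\operatorname{dist}(v,\ker f_x)$ is the coimage norm, which $\|f_xv\|$ does not control when $f_x$ is not strict, so measurability of $x\mapsto\ker f_x$ is not a formal consequence of measurability of $f$.
\end{itemize}
This is repairable. Code vectors of $B_x$ as rapidly Cauchy sequences from the countable set $S_Q$. This turns $x\mapsto\ker f_x$ into a closed-valued correspondence into Baire space with measurable graph, and completeness of $\mu$ then gives a Castaing representation.

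The paper produces the generating sections differently. After rescaling to $\|T_x\|\le1$, it passes to the separable Banach space $\int^{\oplus,1}_MB_x\,dx$, takes an M-basis of the kernel of the induced operator there, and lifts it to measurable representatives. A correctly set-up selection argument gives fibrewise density of the generating family directly. The paper's route instead has to argue that these lifts are dense in $\ker T_x$ for almost every $x$.
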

In the literature there are a number of different notions of Banach bundles, and as far as the authors are aware, none of them have this property. In Section \ref{Section: Banach bundle}, we give more details explaining why some of the alternative contenders are insufficient for our purposes.

\subsection*{Order of the paper}

In Section 2, we introduce a notion of measurable families of Banach spaces indexed on a Polish space $M$ with a Borel measure $\mu$, as well as a notion of measurable families of morphisms between such objects, forming the category $\cat{mBan}^M$. We then prove some elementary measurability results and prove that $\cat{mBan}^M$, and the closely related $\cat{bBan}^M$ are quasi-abelian (Theorems \ref{Thm: mBan QA} and \ref{thm: bBan QA}).
Finally, we end the section by mentioning some related categories. 

In Section 3, we introduce the measurable sections functor as a functor $\int^m_M:\cat{mBan}^M\to \cat{Vect}$, construct the $p$-direct integral as a functor from a certain subcategory $\int^{\oplus,p}_M:\cat{bBan}^M\to \cat{Ban}$ whose morphisms are uniformly bounded, and show that these functors are strictly and strongly exact, respectively.

The results in Section 4 apply largely only in the case where the base field $\KK$ is Archimedean. This is due to the fact that we often require stronger properties of the direct integral or the Markushevich bases involved than those afforded by non-Archimedean Banach spaces. 
Despite the loss of generality, this section contains (in the humble opinion of the first author) the most interesting theorems of the paper In particular, we show: 
\begin{theorem*}
    Let $(B_x)$, $(C_x)$ be two objects of $\cat{bBan}^M$, and $(T_x):(B_x)\to (C_x)$ a morphism.
    \begin{description}
        \item[(Theorem \ref{thm: approximate isometries} and Corollary \ref{Cor: approximate isometries})] If $U\subseteq M$ is a small neighbourhood of $x\in M$, then one can approximately-isometrically embed the Banach space $B_x$ into the direct integral $\int^{\oplus,p}_U B_tdt$. Furthermore, this approximation improves arbitrarily close to an isometry as one shrinks the neighbourhood $U$.
        In particular, one can reconstruct the constituent $B_t$ from $\int^{\oplus,p}_M B_tdt$.
        \item[(Proposition \ref{Prop: Direct integral duals})] Suppose $p\in(1,\infty)$ and $q=\frac{p}{p-1}$. Under certain additional hypotheses, the dual space of 
        $\int^{\oplus,p}_M B_tdt$ is $\int^{\oplus,q}_M B_t^{\vee}dt$.
        \item[(Theorem \ref{Thm: integral kernels})] Under technical hypotheses on $p,q\in [1,\infty)$ and on the families $(B_x),(C_x)$, any bounded operator 
        \[T:\int^{\oplus,p}_MB_mdm\to \int^{\oplus,q}_NC_ndn\]
        can be decomposed as an integral kernel
        \[T(v_m)=\int_{} k_T(m,n) v_m  d\mu_n(m)\]
        where each $k_T(m,n)$ is a bounded operator from $B_m\to C_n$. 
    \end{description}
\end{theorem*}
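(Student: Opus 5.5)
The statement collects three separate results, so I would prove them in the order listed, with each one feeding the next. Throughout I would work with the measurable structure on $(B_t)$ given by a measurable family of Markushevich bases $(b_i(t),b_i^*(t))_{i\in I}$, as set up in Section 2. I would use the exactness of $\int^{\oplus,p}_M$ from Section 3 freely.

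\textbf{Approximate isometries.} For $v=\sum_{i\in F}c_i b_i(x)$ in the dense span, I would take the measurable section $s_v(t)=\sum_{i\in F}c_i b_i(t)$. I would then define
\[
\iota_U(v)=\mu(U)^{-1/p}\,\mathbf 1_U s_v\in\int^{\oplus,p}_U B_t\,dt .
\]
The $p$-th power of its norm is the average over $U$ of $\|s_v(t)\|^p$. By the Lebesgue differentiation theorem (valid for a Borel measure on a Polish space at $\mu$-a.e.\ $x$, or after assuming a doubling-type condition), this average tends to $\|v\|^p$ as $U$ shrinks to $x$. The main obstacle is uniformity: one needs $(1-\varepsilon)\|v\|\le\|\iota_U v\|\le(1+\varepsilon)\|v\|$ for all $v$ at once, not just for each fixed $v$.

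To get uniformity I would first apply Lusin's theorem to the countably many functions $t\mapsto\|\sum_{i\in F}q_ib_i(t)\|$, with $F$ finite and $q_i$ rational. This yields a compact set of large measure on which all of these functions are continuous at $x$. I would then restrict to finite-dimensional spans and use compactness of their unit spheres to pass from a net of rational combinations to all $v$. This gives the required estimate on each finite-dimensional span, and hence, after choosing $U$ appropriately, a map that extends by density to an approximate isometry on $B_x$. Reconstruction of $B_x$ then follows as a limit, in Banach--Mazur distance, of subspaces of $\int^{\oplus,p}_U$.

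\textbf{Duals.} The pairing $\langle f,v\rangle=\int f_t(v_t)\,d\mu$, together with H\"older's inequality, gives a contractive map $\int^{\oplus,q}B_t^\vee\to(\int^{\oplus,p}B_t)^\vee$. Isometry would come from choosing measurable norming functionals, which exist by a measurable-selection argument using the dual basis $b_i^*(t)$.

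For surjectivity, I would take a functional $\varphi$ and consider, for each $i$, the set function $A\mapsto\varphi(\mathbf 1_A b_i)$. This is a measure absolutely continuous with respect to $\mu$, so Radon--Nikodym gives densities $g_i(t)$. I would define $f_t$ on $\mathrm{span}\{b_i(t)\}$ by $f_t(b_i(t))=g_i(t)$, and bound $\|f_t\|$ by testing $\varphi$ against sections $\mathbf 1_A s$ with $s$ ranging over rational combinations. The ``additional hypotheses'' enter exactly here, namely separability and reflexivity or the Radon--Nikodym property of the fibres. They ensure that $f_t$ extends boundedly and that $t\mapsto\|f_t\|$ lies in $L^q$.

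\textbf{Integral kernels.} I would define the kernel as a limit of compressions. Take the approximate embedding $\iota_{U}$ at $m$ and the dual quotient $\iota_{V}^{\vee}$ at $n$, which comes from the duality above. Set
\[
k_T(m,n)=\lim_{U\downarrow m,\,V\downarrow n}\ \mu(U)^{1/p'}\,\iota_V^{\vee}\circ T\circ\iota_U ,
\]
with normalisations chosen so that the limit is a Radon--Nikodym derivative of the operator-valued set function
\[
(A,B)\mapsto \bigl\langle \mathbf 1_B\, c_j^*,\ T(\mathbf 1_A b_i)\bigr\rangle .
\]
Existence of this derivative, for each pair $(i,j)$ of basis indices, gives the kernel entries $k_T(m,n)_{ji}$, and hence $k_T(m,n)$ on finite spans. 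Boundedness of $k_T(m,n):B_m\to C_n$ and the kernel formula on simple sections follow from the approximate-isometry estimates, and the formula extends to all sections by density.

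I expect this last step to be the hardest. A general bounded operator need not come from a genuine function-valued kernel; compare the identity on $L^2$, whose kernel is a delta. So the technical hypotheses on $p,q$ must be exactly those that make the bimeasure absolutely continuous. I would first attempt the case $p=1$, where operators out of $L^1$-type spaces are representable by a Dunford--Pettis-style argument, and then try to extend from there.
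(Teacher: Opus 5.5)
Your first two parts largely follow the paper, apart from one overreach noted in the last paragraph. The kernel step, however, has a genuine gap.

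\textbf{The kernel step.} You define $k_T$ as the density of $(A,E)\mapsto\langle\mathds{1}_E c_j^*,T(\mathds{1}_A b_i)\rangle$ with respect to the product of the base measures, and hope the hypotheses on $p,q$ force absolute continuity. They do not. Theorem \ref{Thm: integral kernels} allows $p=q=2$. For the identity of $\int^{\oplus,2}_M B_m$ (Hilbert fibres, $\mu$ non-atomic) this bimeasure is $\delta_{ij}\,\mu(A\cap E)$, which is carried by the diagonal and singular to $\mu\otimes\mu$. So no density exists, and a $p=1$ Dunford--Pettis argument cannot be extended to reach this case.

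The idea you are missing, and the reason the statement is written with $d\mu_n(m)$, is that the reference measure depends on $T$. The paper proceeds as follows:
\begin{itemize}
\item it takes the operator-valued set function $\mu_T(U\times V)=\mathds{1}_V\,T(\,\cdot\,\mathds{1}_U)$ on $M\times N$ and its total variation $\hat\mu_T$;
\item it disintegrates $\hat\mu_T$ over $N$ into $(\hat\mu_{T,n})_{n\in N}$ and sets $k_T=d\mu_T/d\hat\mu_T$;
\item it checks the formula by pairing with the dual from Proposition \ref{Prop: Direct integral duals}.
\end{itemize}
The form of the statement accommodates the identity via $\hat\mu_{T,n}=\delta_n$ and $k_T=\id$. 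It also absorbs densities that are not essentially bounded, as Theorem \ref{Thm: integral kernels} requires. For example, take $p=1$ and $Tf=(\int_M f)\,h$ with $h\in L^q\setminus L^\infty$. Your density is $h(n)$, which is not essentially bounded, whereas $d\hat\mu_{T,n}=|h(n)|\,dm$ with $k_T=h/|h|$ works. Your claim that boundedness of $k_T(m,n)$ on all of $B_m$ follows from the approximate-isometry estimates also fails, for the reason given in the last paragraph.

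\textbf{Your unease about the hypotheses is justified.} Even a $T$-dependent measure does not yield the theorem in the stated generality.
\begin{itemize}
\item Take the Hilbert transform on $L^2(\R)$, with all fibres $\C$ and $p=q=2$. Then $H\mathds{1}_{[a,b]}(n)=\frac1\pi\log\bigl|\frac{n-a}{n-b}\bigr|$ is unbounded as $a\uparrow n$. A representation of the stated form would bound it by $\int_{[a_0,b]}|k_T(m,n)|\,d\hat\mu_{T,n}(m)<\infty$ for all rational $a_0<a<n<b$ and a.e.\ $n$.
\item With infinite-dimensional fibres even $p=1$ fails. Let $K:\ell^2\to L^2[0,1]$ be a unitary onto the trigonometric basis, and consider $x\mapsto K(\int_M x_m\,dm)$. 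The kernel formula forces $k_T(m,n)$ to be a positive multiple of the unbounded functional $v\mapsto(Kv)(n)$.
\end{itemize}
So neither your plan nor the paper's variation-measure construction, including its near-isometry bound on $d\mu_T/d\hat\mu_T$, can establish Theorem \ref{Thm: integral kernels} under its stated hypotheses. A correct version needs stronger hypotheses, for instance $p=1$, finite-dimensional $B_m$, and a target with the Radon--Nikod\'ym property. There your Dunford--Pettis idea, combined with the $T$-dependent normalisation, does go through.

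\textbf{First and second parts.} The estimate you get on $B^n_x=\Span\{b_1(x),\dots,b_n(x)\}$ requires $U$ to depend on $n$; no single $U$ works on the whole span. The paper notes that for $B_t=\ell^t$ near a fixed exponent, the map $I_U$ (your $\iota_U$) is unbounded for every $U$; test it on $\sum_{i\le N}b_i$. So it does not extend by density to an approximate isometry on $B_x$. What holds is exactly Theorem \ref{thm: approximate isometries}: $m_{U,n},M_{U,n}\to1$ for each fixed $n$. Reconstruction goes through recovering $\|v\|_x=\lim_U\|I_Uv\|_{\oplus}$ on the dense span.

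Otherwise your argument, convergence of averages plus a finite net on the compact coefficient cube, is the paper's. The paper proves the continuous case at every point and obtains the a.e.\ case from Lusin. Your density-point version is the more careful route to the latter, at the cost of needing a differentiation theorem for $\mu$, which is not automatic for Borel measures on Polish spaces.

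Your duality argument is also the paper's: Radon--Nikod\'ym applied to $A\mapsto\varphi(\mathds{1}_A v)$. Two corrections:
\begin{itemize}
\item The hypothesis that matters is that $\{b_i^*(t)\}$ has norm-dense span in $B_t^\vee$, i.e.\ the basis is shrinking; reflexivity implies this. The Radon--Nikod\'ym property of $B_t$ is not enough, as the constant $\ell^1$ bundle shows.
\item The $L^q$ bound must come from testing against step sections, as you propose. No pointwise bound $\|f_t\|\le\|\varphi\|$ holds when $p>1$.
\end{itemize}
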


In Section 5, we turn our attention to measurable and bounded families of sheaves. We demonstrate some origins of measurable families of constructible sheaves and show some interesting examples of their direct integral.

This paper also contains two appendices. The first contains a technical result that can be thought of as a generalisation of the fact that a function $f:M\to B$ from a measure space to a \emph{separable} Banach space is strongly measurable if and only if it is weakly measurable.

The second appendix is a list of references for non-Archimedean analogues of certain algebraic results from classical functional analysis, such as the Hahn-Banach theorem. This just simply for completeness, so that we can reference these standard results and not worry whether they apply in the non-Archimedean case.

\subsection{Relation to Banach bundles}{\label{Section: Banach bundle}}

In the literature, there are multiple notions of `Banach bundles', to which our category $\cat{bBan}^M$ seems to have similarities to. However, it appears to be genuinely different to all of them, for technical reasons. 

Here are two examples of notions of Banach bundles we are aware of. 

\begin{description}
    \item[First Contender (See Chapter 3 of \cite{MR1335233}):] This is the notion of vector bundle found in Lang's book \cite{MR1335233}. A surjective continuous map $\pi:E\to X$ of topological spaces such that each $\pi^{-1}(x)$ is a Banach space, and such that for each point $x\in X$, there is a neighbourhood $U_i$ and a Banach space $X_i$ such that $\pi^{-1}(U_i)\cong U_i\times X_i$. 

    This is different to what we attempt to do, because we allow the Banach spaces $B_x$ lying above each $x$ to vary.
    \item[Second Contender (See \cite{MR4678838}):] Given a fixed Banach space $\BB$, one calls a weakly measurable multivalued map $\mathbb{E}: X\to \BB$ a Banach $\BB$-bundle on $X$ if $\mathbb{E}(x)$ is a closed linear subspace of $\BB$ for every $x\in X$. 

    The problem with this approach is that, even in the situation with $X=*$, the cokernel of a inclusion $S\hookrightarrow B$ of a closed subspace may well not embed into $\BB$, because a closed subspace need not be complemented. 
    Indeed, a Theorem of Lindenstrauss and Tzafriri (Theorem 2.d.9 of \cite{MR500056}) implies that any $\ell^p$-space with $p>2$ has subspaces which cannot be complemented.

    What's more, even if one were to choose $\BB=C([0,1])$ so that one can guarantee that every separable Banach space embeds isometrically, one has very little control over how each $\coker(\EE_1(x)\to \mathbb{E}_2(x))$ embeds. 
    So unless one is willing to seriously restrict one's universe, this notion is also a dead-end.  
\end{description}
Our approach is to avoid embedding the families $(B_x)$ into any Banach space at all, and to instead look at particular minimal generating sequences known as Markushevich bases. Even so, this runs into difficulties, because given a pair of Banach spaces $B,C$ with respective M-bases $\{(b_i,b_i^*)\}_{i\in \N}$ and $\{(c_i,c_i^*)\}_{i\in \N}$, and some Bounded morphism $T:B\to C$, there is no way to algorithmically construct an $M$ basis of neither $\ker(T)$ nor $\coker(T)$ from those of $B$ and $C$, due mainly to the fundamental difficulty that there is no guarantee of a projection $P$ from $B$ onto $\ker(T)$, nor an inclusion $\coker(T)\to C$.

\subsection*{Acknowledgements}

This paper was produced as part of the first author's time as a Teach@T\"ubingen Fellow and the second authors time as a PhD candidate. The authors would like to thank the relevant funding bodies, the Teach@T\"ubingen Fellowship stipend, and the University of T\"ubingen for financially supporting the authors and bringing together the expertise to make this research possible.
We would also like to offer particular thanks to Anton Deitmar for his unwavering support and encouragement throughout this project. 

\section{The categories of measurable and bounded abstract Banach bundles}

\subsection{The category of measurable abstract Banach bundles}

Suppose we are given the following data: $\KK$ a separable, locally compact, spherically-complete normed field with a Borel measure (we really think $\KK=\R,\C$, $\Q_p$, $\F_p((t))$ or similar). 
Throughout, we let $\cat{Vect}$ be the abelian category of $\KK$-vector spaces, $\cat{Ban}$ be the category of $\KK$-Banach spaces whose morphisms are precisely  and, in the case $\KK=\C$, $\cat{Hilb}$ be the category of Hilbert spaces. We also denote by $\omega$ a limit ordinal. 

We first recall the notion of a bounded Markushevich basis of a Banach space. A detailed reference can be found at \cite{Tse25}.

\begin{definition}{\label{Def: Markushevich Basis}}
    Suppose $B$ is a Banach space and $B^\vee$ its continuous dual. A \emph{Markushevich basis} (M-basis) of $B$ is a collection of pairs $\{(b_i,b_i^*)\in B\times B^\vee\}_{i\in \omega}$ such that:
    \begin{enumerate}
        \item The set $\{b_i\}_{i\in \omega} $ is \emph{complete}; i.e. the closed span $\CSpan(b_i)=B$;
        \item The set $\{b_i\}$ is \emph{minimal}, meaning for any $i\in \omega$, the closed span $\CSpan\{b_j:j\neq i\}\subsetneq B$;
        \item The collection $(b_i,b_i^*)$ is \emph{biorthonormal}, meaning that $b_k^*(b_i)=\delta_{ij}$ is the Kronecker delta function;
        \item The set $\{b_i^*\}$ is \emph{total}, or \emph{separates the points} of $B$; meaning that if $b_i^*(x)=0$ for all $i\in \omega$, then $x=0$.
    \end{enumerate}
        If, in addition, there exists some $C\geq1$ such that for every $i\in\omega$, 
        \[\|b_i\|\|b_i^*\|\leq C,\]
        then we say that the M-basis is \emph{bounded}.
\end{definition}

\begin{remark}
    It is worth pointing out that, due to minimality of the set $\{b_i\}$, the family $\{b_j^*\}$ of biorthonormal functions is uniquely determined from the family $\{b_i\}$, because each $b_j^*\in B^\vee$ is uniquely determined, via continuity, by its values on the dense subset $\Span\{b_i\}$, in turn uniquely determined by the property $b_j^*b_i=\delta_{ij}$.
\end{remark}

In this paper, we will further take the convention that an M-basis is normalised (i.e. that $\|b_i\|=1$ for all $i\in \omega$). We remark that a bounded M-basis with constant $C=1$ is called an Auerbach basis, and it is conjectural whether or not every separable Banach space has one. Despite this open problem, it is known (even so early as 1943) that every separable Banach space has an M-basis\footnote{In fact, given a constant $C>1$, Pełczyński shows in \cite{MR425587} the existence of a bounded M-basis with constant $C$.}, which one can obtain using a process akin to the Gram--Schmidt algorithm. We mention this, because the process will prove useful later in the paper.  
It is usually the convention to only give the name Markushevich/Auerbach basis for a Banach space if it is countable. We don't strictly use this convention, though the majority of the paper is only concerned with spaces that are indeed separable.

We are now ready to define the category of measurable abstract Banach bundles $\cat{mBan}^M_{\omega}$ of $\KK$-Banach spaces as follows.

\begin{definition}[\textbf{Abstract Banach Bundles as objects of $\cat{mBan}^M_\omega$}]{\label{Def: Objects}}
Let $M$ be a topological space with a measure $\mu$ on the Borel $\sigma$-algebra, which we will refer to as a Borel measure space. Let $\omega$ be an ordinal. 
    A (measurable) \emph{abstract Banach bundle} in $\cat{mBan}^M_\omega$ is a family of tuples
    \[\Big\{\big(B_x,\{b_i(x)\}_{i\in\omega}\big)\Big\}_{x\in M}\]
    such that 
    \begin{enumerate}
        \item For each $x$, $B_x$ is a $\KK$-Banach space. 
        \item For each $x\in M$, the set $\{b_i(x)\}_{i\in \omega}$ is a sequence of vectors, either zero or norm $1$, such that the non-zero elements form an M-basis.
        \item For each $i\in \omega$, the zero set
        \[Z_i:=\{x\in M:b_i(x)=0\}\]
        is a measurable set. 
        \item{\label{Condition 4}} For any $(a_i)_{i\in \omega}\in k$, such that all but finitely many $a_i$ are zero, the expression
        \[\left\|\sum_ia_ib_i(x)\right\|_{B_x}\]
        is a measurable function $M\to \R$. 
    \end{enumerate}
\end{definition}

Our strategy to defining the notion of measurable morphisms of abstract Banach bundles is standard, first defining the notion of constant and step families, and then extending to pointwise limits. Of course, we want all of these notions to be stable under obvious operations such as linear spans and composition, so for the sake of these desired properties, it is convenient to add some restriction to the notions of constant and step families to simplify these proofs. As proposition \ref{Prop: Decent=Everything} will show, this restriction is minor.

\begin{definition}
    Suppose $(B,\{b_i\}_{i\in \omega})$ and $(C,\{c_i\}_{i\in \omega})$ are two Banach spaces with M-bases. We say that a bounded operator $T:B\to C$ is \emph{basic}\footnote{This notion appears in \cite{PE73} under the guise of `finite expansion operators'.} if it maps finite span vectors to finite spans. That is, 
    \[T(\Span \{b_i\})\subseteq \Span\{c_i\}.\]
    We denote the set of basic operators as $\operatorname{Basic}(B,C)$. 
    If $T\in \overline{\operatorname{Basic}}(B,C)\subseteq \Bc(B,C)$ is in the norm closure of the set of basic operators, we say that $T$ is \emph{decent}.
\end{definition}

It is worth pointing out some elementary facts about basic operators. Firstly, the identity operator in $\Bc(B,B)$ is basic. Secondly, by Lemma 1 of \cite{PE73}, all finite rank operators are decent. This proof also works in the non-Archimedean case, and we can even generalise thus:

\begin{prop}{\label{Prop: Decent=Everything}}
    Assume both $B$ and $C$ are separable Banach spaces, with M bases $\{b_i\}_{i\in \N}$ and $\{c_i\}_{i\in \N}$.
    Then the set of basic morphisms is norm-dense in the space of all bounded operators. 
    \[\overline{\operatorname{Basic}}(B,C)=\Bc(B,C).\]
\end{prop}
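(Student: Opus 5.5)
The plan is to prove density directly by a rank-one perturbation argument that corrects $T$ on each basis vector separately. The key observation is that basicness only constrains the values of an operator on the vectors $b_i$. By biorthonormality, we can change the value on $b_i$ alone by adding a rank-one operator of the form $y\otimes b_i^*$, i.e.\ $x\mapsto b_i^*(x)\,y$, without affecting the values on the other $b_j$.

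Fix $T\in\Bc(B,C)$ and $\varepsilon>0$. Since $\{c_j\}$ is complete, $\Span\{c_j\}$ is dense in $C$. So for each $i\in\N$ we can choose $y_i\in\Span\{c_j\}$ with
\[
\|Tb_i-y_i\|_C<\varepsilon_i:=\frac{\varepsilon\,2^{-i-1}}{1+\|b_i^*\|}.
\]
Here countability of the index set is what makes such a summable choice of tolerances possible. Now define
\[
S:=T+\sum_{i\in\N}(y_i-Tb_i)\otimes b_i^*.
\]
Each summand has operator norm at most $\|y_i-Tb_i\|\,\|b_i^*\|<\varepsilon 2^{-i-1}$. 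Hence the series converges absolutely in the Banach space $\Bc(B,C)$, $S$ is bounded, and $\|S-T\|\le\sum_i\varepsilon2^{-i-1}\le\varepsilon$. In the non-Archimedean case the ultrametric inequality gives the even better bound $\|S-T\|\le\sup_i\varepsilon 2^{-i-1}$, and convergence of the series only needs the terms to tend to $0$. So the same choice works there.

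It remains to check that $S$ is basic. Evaluate the convergent series at $b_j$ termwise, using continuity of evaluation, and use $b_i^*(b_j)=\delta_{ij}$. This gives $Sb_j=Tb_j+(y_j-Tb_j)=y_j\in\Span\{c_k\}$. By linearity, $S(\Span\{b_i\})\subseteq\Span\{c_k\}$, so $S\in\operatorname{Basic}(B,C)$. Since $\varepsilon$ was arbitrary, $T\in\overline{\operatorname{Basic}}(B,C)$, which gives the claimed equality.

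I expect no serious obstacle. The only points needing care are the choice of the tolerances $\varepsilon_i$ scaled by $\|b_i^*\|$, which may be unbounded for a non-bounded M-basis, and the justification that the infinite sum of rank-one operators converges in norm. Both are handled by the geometric weights $2^{-i}$ together with completeness of $\Bc(B,C)$.
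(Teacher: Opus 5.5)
Your proof is correct and is essentially the paper's argument: the paper defines $T^\epsilon$ on $\Span\{b_i\}$ by $b_i\mapsto v_{i,N_i}$, with tolerances $\epsilon/(2^i\|b_i^*\|)$, and uses the bound $\|(T-T^\epsilon)b\|\le\sum_k|b_k^*(b)|\,\|(T-T^\epsilon)b_k\|$, which is the same estimate as yours for the series $\sum_i (y_i-Tb_i)\otimes b_i^*$. Writing the correction as a norm-convergent series of rank-one operators only spares you the step of extending a bounded map from the dense span.
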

\begin{proof}
Let $T:B\to C$ be a bounded operator.
  For each $i$, there is a sequence $v_{i,n}:=\sum_{j=1}^n\alpha^{i,n}_jc_j$
    such that $\lim_{n\to \infty}v_{i,n} =Tb_i $.
    Let $M_i:=\|b_i^*\|$, and let $\epsilon>0$. 
    One can find a sequence $\epsilon_i>0$ of positive real numbers such that $\epsilon=\sum_i M_i\epsilon_i<\infty$. 
    (For example, one can set $\epsilon_i:= \frac{\epsilon}{2^iM_i}$).

For every $i$, there is some $N_i$ large enough so that $\|v_{i,N_i}-Tb_i\|<\epsilon_i$. Let $T^\epsilon$ be the linear map (defined on $\Span\{b_i\}$) mapping each $b_i\mapsto v_{i,N_i}$ so that $c^*_jT^\epsilon b_i=0$ for all $j>N_i$.
    Let $b=\sum_k a_kb_k$ be a finite sum with $\|b\|_B\leq 1$. Then

     \begin{align*}
        \|(T-T^\epsilon)b\|
        &= \left\|\sum_k a_k(T-T^\epsilon)b_k \right\| \\
        &\leq \sum_k |a_k|\|(T-T^\epsilon)b_k\| \leq \sum_k M_k\epsilon_k=\epsilon.
    \end{align*}
    Thus, since the set of such $b$ is dense in the unit ball of $B$, it follows firstly that $\T^\epsilon$ is bounded, and hence a basic operator, and further that $\|T-T^\epsilon\|<\epsilon$, and therefore one can always find some basic operator $T^\epsilon$ arbitrarily close to $T$. The result follows. 
\end{proof}

We can now finish defining the category $\cat{mBan}^M_{\omega}$. 

\begin{definition}[\textbf{Morphism in the category $\cat{mBan}^M_\omega$}]\label{Def: morphisms}
    To define morphisms, we need some intermediate notions. Let $(B_x,\{b_i(x)\}), (C_x,\{c_i(x)\})$ be two abstract Banach bundles, and suppose $(T_x:B_x\to C_x)_{x\in M}$ is a family of bounded operators. We say that:
    \begin{itemize}
        \item $(T_x)_{x\in M}$ is a \emph{constant basic morphism}, or a \emph{constant basic family}, if each $T(x)$ is basic, and the functions $T_{ij}:x\mapsto c_j^*(x)T_x b_i(x)$ are constant functions $M\to k$;
        \item $(T_x)_{x\in M}$ is a \emph{step basic morphism}, or a \emph{step basic family}, if $M$ can be decomposed into a countable disjoint union $M=\coprod_{i=1}^{\infty}M_i$ of measurable subsets such that each restriction $T|_{M_i}$ is constant basic. Equivalently, each of the functions $T_{ij}:M\to k$ above is a measurable function with countable image. 
        \item $(T_x)_{x\in M}$ is a \emph{Bochner measurable morphism}, or a \emph{measurable family}, if there is a sequence $S_n(x)$ of step basic families such that for almost all $x\in M$ we have
        \[\lim_{n\to \infty }S_n(x) =T(x)\]
        in the $x$-norm operator topology.
    \end{itemize}
    We define the set of morphisms $(B_x,\{b_i(x)\})\to(C_x,\{c_i(x)\})$ as the set of all Bochner measurable families. 
\end{definition}

We call the functions $T_{ij}:M\to k$ the matrix coefficients of $(T_x)$. We remark that all the definitions regarding morphisms are with respect to these functions, and not with respect to their norms. Indeed, a constant basic family may not have a constant norm!

For convenience, because the notation is somewhat cumbersome, we will often suppress the Markushevich basis in the notation when we talk about abstract Banach bundles. This is reasonable, since the main purpose of the basis is to enable us to allow us to deform the norm as $x\in M$ varies. 

Our choice of using M-bases in this definition is manyfold. Firstly, because we fundamentally need a way to compare vectors across different Banach spaces, we need some frame of reference to do so - one can imagine that the bases $\{b_i(x)\}$ are like rods fixing the families of Banach spaces to each other. This, of course, is identical to the notion used to compare families of Hilbert spaces $(H_x)$ in the classical Hilbert direct integral, except here there is no notion of orthogonality. 

Secondly, we choose M-bases because one is guaranteed, via the Gram--Schmidt algorithm (See Theorem 4.59 of \cite{FHH+11}), that \emph{every} $\R/\C$-Banach space has one. As a first attempt at defining this category, we tried using Schauder bases. One might have thought this would be ok, one simply restricts ones attention to spaces that have such a basis, but this approach hits the rather unpleasant issue that given a bounded operator $T:(B,\{b_i\})\to (C,\{c_i\})$, there is no way to translate either Schauder basis (or even the combination) to a Schauder basis of the kernel, for the reason that the kernel may not even have one. Constructed examples of Banach spaces without a Schauder basis such as that in \cite{PE73} are almost always constructed as closed subspaces of nice Banach spaces with Schauder bases, and there are even examples of such Banach spaces that are closed subspaces of $l^p(\N)$ for $p>2$ (see. e.g. \cite{MR508257}).

The non-Archimedean case is simpler. Every separable $\KK$-Banach space has a Schauder basis (see 5.5, 3.16 of \cite{VanR78}), and every Schauder basis is itself an M-basis. 

Finally, in an ideal world, it would have been nice to require that the basis in the data was in fact an Auerbach basis, that is, the biorthogonal functions $\|b_i^*\|=1$ are all norm $1$, better reflecting the construction of direct integrals in Hilbert spaces, since in Hilbert spaces, Auerbach and orthonormal bases are equivalent concepts. However, as the existence of such a basis is at best conjectural, and we don't require that the M-basis is even bounded for any proof to work, we stick to the more general setting. 

If $(B_x,\{b_i(x)\}_{i\in \omega})$ is an abstract Banach bundle such that each zero set $Z_i=\emptyset$ or $M$, and such that all functions in condition 4. are continuous (resp. constant), then we can call $(B_x)$ a continuous (resp. constant) abstract Banach bundle. 

Now, of course, we need to confirm that $\cat{mBan}^M$ is indeed a category, an exercise that is a trivial corollary of the following lemma:

\begin{lemma}{\label{lem: composition}}
    The composition of two (Bochner) measurable morphisms is a measurable morphism. 
\end{lemma}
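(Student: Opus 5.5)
The plan is to reduce to step basic families, using the fact that operator-norm convergence is compatible with composition. Let $(T_x):(B_x)\to(C_x)$ and $(S_x):(C_x)\to(D_x)$ be measurable. By definition there are step basic families $T^n$ with $T^n_x\to T_x$ off a null set $N_1$. Likewise there are step basic families $S^n$ with $S^n_x\to S_x$ off a null set $N_2$.

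The first key step is to show that the composite of two step basic families is step basic. I take common refinements of the two countable measurable partitions of $M$. The intersections $M_i\cap M'_j$ again form a countable measurable partition, and on each piece both families are constant basic. On such a piece, consider the basis vector $b_i(x)$. Because $T_x$ is basic, $T_xb_i(x)=\sum_{k\in F}T_{ik}c_k(x)$ for a finite set $F$ with constant coefficients. One subtlety is that the finite support is constant on the piece; this holds because the matrix coefficients are constant. Applying $S_x$ then gives
\[S_xT_xb_i(x)=\sum_{k\in F}T_{ik}\sum_{l}S_{kl}d_l(x),\]
which is a finite sum with constant coefficients. So the composite is basic, and its matrix coefficients $(ST)_{il}=\sum_k T_{ik}S_{kl}$ are constant on the piece.

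A second subtlety concerns the zero sets $Z_i$. Where $c_k(x)=0$ the coefficient functional is not really defined. I will treat this by further intersecting the partition with the measurable sets $Z_k$ and their complements. There are countably many of these, so the refined partition is still countable. This is also why the paper requires the $Z_i$ to be measurable.

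The second step is the limit. Off the null set $N_1\cup N_2$ I would like to write
\[\|S_xT_x-S^n_xT^n_x\|\le\|S_x-S^n_x\|\,\|T_x\|+\|S^n_x\|\,\|T_x-T^n_x\|.\]
The factors $\|S^n_x\|$ are bounded in $n$ for each fixed $x$, since the sequence converges. Hence the right-hand side tends to $0$ pointwise almost everywhere. So $S^nT^n$ is a sequence of step basic families converging almost everywhere in the $x$-norm operator topology to $ST$, and $ST$ is measurable. Identities are constant basic, hence measurable, so $\cat{mBan}^M$ is a category. Associativity is inherited pointwise.

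The main obstacle is the first step, specifically the bookkeeping showing that the product of constant basic families stays constant basic. This is where the finite support of $T_xb_i(x)$ must be uniform on each piece, and where the possibly vanishing basis vectors $c_k(x)$ need care. The limit step is routine.
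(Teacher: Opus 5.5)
Your argument follows the paper's route. Constant basic composites are constant basic, step basic composites are step basic via the common refinement $M_i\cap M'_j$, and measurability passes to a.e.\ limits. Your estimate $\|S_xT_x-S^n_xT^n_x\|\le\|S_x-S^n_x\|\,\|T_x\|+\|S^n_x\|\,\|T_x-T^n_x\|$ is the explicit version of what the paper calls clear.

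Your local computation is actually the sound version of the paper's. The paper claims that basicness of $S_x$ makes $d_j^*(x)S_x$ a finite combination of the $c_s^*(x)$. That does not follow from the definition: the functional $v\mapsto\sum_s v_s/s$ on $\ell^2$ over $\C$, with target $\C$, is basic but is not such a finite combination. You instead use basicness of $T_x$ to write $T_xb_i(x)=\sum_{k\in F}T_{ik}c_k(x)$ with $F$ fixed on the piece, which is exactly what is needed.

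One step is wrong, though, and should simply be deleted. Intersecting the partition with all the $Z_k$ and their complements does \emph{not} keep it countable, because the pieces of that common refinement are indexed by $\{0,1\}^{\N}$. For instance, the $Z_k$ can be the binary-digit subsets of $[0,1]$; this can occur for a genuine object, e.g.\ using the vectors $e_k\mathds{1}_{M\setminus Z_k}$ and $e_k\mathds{1}_{Z_k}$ in $\ell^2$. The pieces are then singletons, and the refined family would not be step basic.

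No refinement is needed. Adopt the convention $c_k^*(x)=0$ whenever $c_k(x)=0$, which is required anyway for the matrix coefficients to be defined. Biorthogonality then gives $T_xb_i(x)=\sum_k T_{ik}(x)c_k(x)$ at every point, and terms with $c_k(x)=0$ contribute nothing. So your formula $(ST)_{il}=\sum_{k\in F}T_{ik}S_{kl}$ holds on all of $M_i\cap M'_j$ as it stands.

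For the same reason, your closing remark that identities are constant basic holds only when each $Z_i$ is $\emptyset$ or $M$, since the diagonal coefficients of the identity are $\mathds{1}_{M\setminus Z_i}$. That remark lies outside the lemma, however.
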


\begin{proof}
    Let $(T_x):(B_x)\to (C_x)$ and $(S_x):(C_x)\to (D_x)$ be two morphisms measurable families between abstract Banach bundles.
    We start by showing that if both are constant basic, then their composition is also constant. Since it is clear that the compostition of basic operators is basic, we just want to show that each $x\mapsto d_j^*(x)S_xT_xb_i(x)$ is a constant function. 

    Because $S_x$ is basic, the functional $d_j^*(x)S_x:C_x\to k$ is equal to the finite span 
    \[\sum_s \alpha_sc_s^*(x)=d_j^*S_x\]
    where each $\alpha_s:=d_j^*(x)S_xc_s(x)$ is constant.
    It follows that the composition 
    
    \[\sum_s\alpha_sc_s^*(x)T_xb_i(x)=d_j^*S_xT_xb_i\]
    is constant. 
    
    Now it is simple to show that the composition of two step functions is also a step function, one takes the countable collection of sets $M_n^T$ and $M_n^S$ on which the functions $T_x$ and $S_x$ are constant respectively, and reorders the countable set $\{M_n^T\cap M_m^S:m,n\in \N\times\N\}$ to obtain a step function. 

    Lastly, one needs to show that the composition of two measurable families is a measurable family. If $(T_x)$ and $(S_x)$ are measurable families with respective families of step functions $(T_x^n)$ and $(S^n_x)$ which converge to them pointwise in the $x$-norm operator topology, then it is clear that the composition $S_x^n\circ T_x^n\to S_x\circ T_x$ converges in the $x$-norm operator topology too.
\end{proof}

\begin{lemma}\label{matrix coefficients are measurable maps}
    If $(T_x):(B_x)\to (C_x)$ is a morphism in $\cat{mBan}^M_\omega$, then for any $i,j\in\omega$ the matrix coefficient 
    \begin{equation}\label{eq: weakly measurable morphism}
    \begin{split}
        T_{ij}:M&\to k \\
        x&\mapsto c_j^*(x)\Big(T_x b_i(x)\Big)
        \end{split}
    \end{equation}
    is measurable.
\end{lemma}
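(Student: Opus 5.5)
Since $(T_x)$ is a morphism, Definition~\ref{Def: morphisms} gives a sequence of step basic families $(S^n_x)$ with $S^n_x\to T_x$ in the $x$-norm operator topology for almost every $x\in M$. The plan is to show that each matrix coefficient $S^n_{ij}$ is measurable, and that $S^n_{ij}(x)\to T_{ij}(x)$ almost everywhere. Then $T_{ij}$ agrees almost everywhere with a pointwise limit of measurable functions, and is therefore measurable. As usual, this is up to modification on a null set: either one assumes $\mu$ is complete, or one reads ``measurable'' in the almost-everywhere sense consistent with the definition of morphisms.

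\textbf{Step 1: the approximants are measurable.} By Definition~\ref{Def: morphisms}, each step basic family $S^n$ comes with a countable measurable partition $M=\coprod_k M^n_k$. On each piece $M^n_k$ the coefficient $x\mapsto c_j^*(x)S^n_x b_i(x)$ is constant. So $S^n_{ij}$ is a countable sum of constants times indicator functions of measurable sets, and is measurable.

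\textbf{Step 2: pointwise convergence.} Fix $x$ in the full-measure set where $S^n_x\to T_x$ in operator norm. If $b_i(x)=0$ or $c_j(x)=0$, the convention is that the coefficient is $0$ (the biorthogonal functional is taken to be zero). Otherwise $c_j^*(x)\in C_x^\vee$ is a fixed bounded functional and $\|b_i(x)\|=1$, so
\[
\bigl|S^n_{ij}(x)-T_{ij}(x)\bigr| \leq \|c_j^*(x)\|\,\|S^n_x-T_x\|\,\|b_i(x)\| \longrightarrow 0 .
\]
This estimate only requires that $\|c_j^*(x)\|$ is finite for each fixed $x$. No uniform bound in $x$ is needed, because the convergence is pointwise.

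\textbf{Main obstacle.} The only real subtlety is bookkeeping with the zero sets $Z_i$ and the null sets. The coefficient is zero on $Z_i\cup Z'_j$, which is measurable by condition 3 of Definition~\ref{Def: Objects}, so splitting $M$ along these sets preserves measurability. For the exceptional null set $N$ where convergence fails, I would replace $T_{ij}$ by $\lim_n S^n_{ij}\cdot\mathbf 1_{M\setminus N}$, or by $\limsup_n$ of the real and imaginary parts, which is measurable. Measurability of $T_{ij}$ itself then follows, either by completeness of $\mu$ or by working with a.e.-defined equivalence classes, which is how morphisms are effectively treated.
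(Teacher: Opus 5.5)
Your proposal is correct and follows the paper's proof: approximate $(T_x)$ by step basic families, use that their matrix coefficients are measurable (constant on the pieces of a countable measurable partition), and pass to the almost-everywhere pointwise limit via the bound $|c_j^*(x)(S^n_x-T_x)b_i(x)|\leq \|c_j^*(x)\|\,\|S^n_x-T_x\|$. Your extra bookkeeping for the zero sets and the exceptional null set (which relies on completeness of $\mu$, as the paper assumes elsewhere) only makes explicit what the paper leaves implicit.
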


\begin{proof}
    By assumption, there is a sequence of step basic families $(S^n_{x}):(B_x)\to (C_x)$, such that for almost every $x\in M$
    $$\sup_{\substack{v\in B_x\\ ||v||_{B_x}\le 1}}\Big|\Big| S^n_{x}(v)-T_x(v)\Big|\Big|_{C_x}\xrightarrow[]{n\to\infty}0.$$
    In particular, for $i\in\omega$
    $$\Big|\Big|S^n_{x}(b_i(x))-T_x(b_i(x))\Big|\Big|_{C_x}\xrightarrow[]{n\to\infty}0.$$
    Moreover, for $j\in\omega$, the map $c_j^*(x)$ is continuous with respect to the operator norm, so that 
    $$c_j^*(x)\Big(S^n_{x}\big(b_i(x)\big)-T_x\big(b_i(x)\big)\Big)\xrightarrow[]{n\to\infty}0.$$
    By linearity, this implies that the matrix coefficients $S^n_{ij}(x)$ converge pointwise (almost everywhere) to $T_{ij}(x)$, making $T_{ij}(x)$ measurable.
\end{proof}

\begin{definition}
    We call a collection of bounded operators $(T_x)_{x\in M}$ such that the matrix coefficients $T_{ij}(x)$ are all measurable a \emph{weakly measurable morphism}.
\end{definition}

There is a nice way to think about weakly measurable morphisms $\Hom_{\cat{mBan}^M_{\omega}}((B_x),(C_x))$. For each $x\in M$, the M-basis $(b_i(x))$ for $B_x$ gives continuous bounded maps
\begin{align}
    \ell^1(\omega)&\to B_x, \qquad B_x \to \ell^\infty(\omega) \\
    \\
    e_i&\mapsto b_i(x), \qquad v\mapsto (b^*_i(x)v)_{i\in \omega}
\end{align}

Thus any map $T_x:B_x\to C_x$ defines a bounded map $\ell^1(\omega)\to \ell^\infty(\omega)$, and it is a well known fact that $\Bc(\ell^1,\ell^\infty)\cong L^{\infty}(\omega\times \omega)$. Thus, $T_x$ can be viewed as an infinite matrix in $L^\infty(\omega\times\omega)$ with entries equal to $T_{ij}(x):=c^*_j(x)T_xb_i(x)$, and a family $(T_x)$ is weakly measurable if all matrix entries are.

\begin{remark}
   Weak measurability is weaker than Bochner measurability. The following is a standard counterexample:

    Let $(a_1,a_2,\cdots)$ be an enumeration of the countable set $\Q\cap[0,1]$, and define $A_t=\{n\in \N:a_n\leq t\}$, an uncountable chain in $\N$. Then 
    \[t\mapsto \mathds{1}_{A_t}\in \ell^{\infty}(\N)\cong\Hom_{\cat{mBan^{[0,1]}}}((\ell^1),(k))\]
    defines a map from $[0,1]$ to the set of homomorphisms from the constant abstract Banach bundle $(\ell^1)$ to the constant abstract Banach bundle $(k)$. The image is clearly uncountable and, because $\|\mathds{1}_{A_t}-\mathds{1}_{A_s}\|=1$ whenever $s\neq t$, has no countable dense subset. In the constant case, Pettis' measurability theorem applies, and we obtain that this map cannot be Bochner measurable, although all coefficients are simply indicator functions. 
\end{remark}

\subsection{Elementary measurability results}

The point of this section is to collect a series of results concerning measurability in the case when $\omega=\N$. 

We start with some notation. Let $\KK^{<\omega}$ be the set of functions $\omega \to k$ with finite support. Let $Q\subseteq k$ be a countable dense subset (in characteristic $0$, we can always take $Q$ as an algebraic extension of $\Q$), and let $Q^{<\omega}$ be the subset of $\KK^{<\omega}$ with values in $Q$. For an abstract Banach bundle $(B_x,\{b_i(x)\}_{i\in \omega})_{x\in M}$ in $\cat{mBan}^M$, define further
\begin{align*}
    S_k&:=\{\sum_i a_ib_i(x): (a_i)\in k^{<\omega}\} \\
    S_k^\vee&:=\{\sum_i a_ib_i^*(x): (a_i)\in k^{<\omega}\} \\
    S_Q&:=\{\sum_i a_ib_i(x): (a_i)\in Q^{<\omega}\} \\
    S_Q^\vee&:=\{\sum_i a_ib_i^*(x): (a_i)\in Q^{<\omega}\}.
\end{align*}
to be the sets of constant sections of $\Hom((k),(B_x)_{x\in M})$. 
Note that for each $x$, the sets $S_k$ and $S_Q$ can be viewed as dense subsets of $B_x$, and the sets $S_k^\vee$ and $S_Q^\vee$ as weak-$^*$ dense in $B_x^\vee$.  

\begin{terminology}
    In the following lemmas, when we refer to constant morphisms, we will not impose the condition of being basic on these morphisms/families. 
    We also refer to notions of constant/step/measurable/weakly measurable families of vectors $(v_x)\in (B_x)$, by which we mean a constant/step/measurable/weakly measurable (not necessarily basic) morphism from the constant family $(k)\to (B_x)$ respectively. 
\end{terminology}

\begin{lemma}
    Let $(T_x):(B_x)\to (C_x)$ be a constant (not-basic) family. Then $\|T_x\|_x$ is a measurable function. 

    Consequently, if $(v_x)\in (B_x)$ is a constant family of vectors, or $(\phi_x)\in (B_x^\vee)$ is a constant family, then both functions of $x$ given respectively by  $\|v_x\|_{B_x}$, $\|\phi_x\|_{B_x^\vee}$ are measurable. 
\end{lemma}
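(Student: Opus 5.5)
The plan is to express $\|T_x\|_x$ as a countable supremum of functions that are measurable by condition~\ref{Condition 4} of Definition~\ref{Def: Objects}, working throughout with $\omega=\N$. A constant (not necessarily basic) family has constant matrix coefficients $T_{ij}=c_j^*(x)T_xb_i(x)$. The first step is to identify the vectors $T_x b$ for $b\in S_Q$ in terms of the constant sections of $(C_x)$. Since $T_x$ need not be basic, $T_xb_i(x)$ need not lie in $\Span\{c_j(x)\}$. Instead, I will use the dual description
\[
\|w\|_{C_x}=\sup\{|\phi(w)| : \phi\in S^\vee_Q(x),\ \|\phi\|_{C_x^\vee}\le 1\}.
\]
This uses that $S_Q^\vee$ is weak-$^*$ dense, and by Goldstine/Kaplansky-type density its intersection with the unit ball is norming. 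In the non-Archimedean case I would cite the appendix Hahn--Banach analogue and allow a factor $1+\epsilon$.

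This reduction is awkward, though, because $\|\phi\|_{C_x^\vee}$ itself depends on $x$. So I plan to argue more directly. Write
\[
\|T_x\|=\sup_{b\in S_Q,\,b\neq 0}\frac{\|T_xb\|_{C_x}}{\|b\|_{B_x}}.
\]
This holds because $S_Q(x)$ is dense in $B_x$ and $T_x$ is bounded. The denominator $x\mapsto\|\sum a_ib_i(x)\|$ is measurable by condition~\ref{Condition 4}, and the set where it vanishes is measurable. It therefore suffices to show that $x\mapsto\|T_xb(x)\|_{C_x}$ is measurable for each fixed $b\in S_Q$; the countable supremum is then measurable.

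The key step, and the main obstacle, is this last claim. The vector $T_xb(x)$ has constant coefficients $\beta_j=c_j^*(x)T_xb(x)$, but it need not be a finite span of the $c_j(x)$. My plan is to approximate it by finite spans with rational coefficients. For fixed $x$, choose $w\in S_Q$ with $\|T_xb(x)-w(x)\|<\epsilon$. The difficulty is that this choice depends on $x$. To handle it, I would try to show
\[
\|T_xb(x)\|=\inf_{w\in S_Q}\Big(\|w(x)\|+\|T_xb(x)-w(x)\|\Big),
\]
but this is circular, since it still involves the norm of $T_xb(x)-w(x)$.

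The route I would actually try first exploits constancy in a different way. Since all the data are constant in $x$, the operator $T_x$ is determined by the constant matrix $(T_{ij})$, so the family is the "same" operator under the coordinate identifications. I would realise this by passing to a constant bundle. If $(B_x)$ and $(C_x)$ are themselves constant bundles, then all norms are constant and the claim is trivial. In the general case, I would reduce to countably many measurable pieces, using the measurable sets $Z_i$ to fix which basis vectors vanish. Then, via Proposition~\ref{Prop: Decent=Everything}, I would approximate $T_x$ by basic operators $T^\epsilon_x$ that map $S_Q$ into $S_k$. For basic operators, $\|T^\epsilon_xb\|$ is the norm of a fixed finite combination $\sum\gamma_jc_j(x)$, which is measurable by condition~\ref{Condition 4}. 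I would then show that the approximating construction in Proposition~\ref{Prop: Decent=Everything} can be chosen with coefficients independent of $x$, together with error bounds $\sum_k M_k(x)\epsilon_k$ that hold pointwise. Finally, a pointwise limit of measurable functions gives measurability of $\|T_xb\|$. The uniformity in $x$ of that approximation is where I expect the real work to lie.

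The consequences follow by specialising. A constant family of vectors is a constant morphism $(\KK)\to(B_x)$, whose operator norm is $\|v_x\|$. A constant functional family is a constant morphism $(B_x)\to(\KK)$, whose operator norm is $\|\phi_x\|_{B_x^\vee}$.
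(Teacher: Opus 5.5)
Your reduction is sound. Since $S_Q(x)$ is dense in $B_x$, the formula $\|T_x\|=\sup_{b\in S_Q\setminus\{0\}}\|T_xb\|_{C_x}/\|b\|_{B_x}$ is a countable supremum, and the denominators are measurable by condition 4 of Definition~\ref{Def: Objects}. Everything therefore hinges on the measurability of $x\mapsto\|u_x\|_{C_x}$ for the constant, non-basic family $u_x=T_xb(x)$. That key step is left open, and the plan you sketch for it cannot work. The construction in Proposition~\ref{Prop: Decent=Everything} chooses, for each $i$, a truncation level $N_i$ and a finite combination $v_{i,N_i}$ with $\|v_{i,N_i}-Tb_i\|<\epsilon_i$, and both choices depend on the norm of the target. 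Making them measurably in $x$ requires knowing that $x\mapsto\|T_xb_i(x)-w\|_{C_x}$ is measurable for $w\in S_Q$, which is an instance of the very statement you are proving. An $x$-independent choice does not exist in general either. Let $C_x=c_0$ with $c_i(x)=s_{\pi_x(i)}$, where $s_n=e_1+\dots+e_n$ is the summing basis (so $s_n^*=e_n^*-e_{n+1}^*$) and $\pi_x$ is a measurably varying permutation of $\N$. Every fixed finite combination $\sum_ia_ic_i(x)$ has first $c_0$-coordinate $\sum_ia_i$, independent of $x$. By contrast, the constant family $u_x$ with $c_i^*(x)(u_x)=\beta_i$ has first coordinate $\sum_k\beta_{\pi_x^{-1}(k)}$. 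For conditionally convergent $(\beta_i)$ one can choose $\pi_x$ by Riemann rearrangement so that this series converges for every $x$ (so $u_x$ exists) but to a sum that varies with $x$. Then no fixed sequence of coefficient vectors converges to $u_x$ for almost every $x$. Splitting $M$ along the zero sets $Z_i$ does not help, since the norms still vary inside each piece.

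The paper takes the dual route you abandoned: $\|T_x\|=\sup|\phi(T_xv)|$ over $\phi\in S_Q^\vee$ and $v\in S_Q$ in the respective unit balls, where each $\phi(T_xv)$ is a fixed finite combination of the constant matrix coefficients. Your objection that $\|\phi\|_{C_x^\vee}$ depends on $x$ is harmless. For $\phi=\sum_ja_jc_j^*(x)$ with $(a_j)\in Q^{<\omega}$ one has $\|\phi\|_{C_x^\vee}=\sup_{w\in S_Q\setminus\{0\}}|\phi(w)|/\|w\|_{C_x}$, and the numerators are (piecewise) constant by biorthogonality. So this is measurable by condition 4, and you may simply divide by $\|\phi\|_{C_x^\vee}\|v\|_{B_x}$.

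The real subtlety on this route lies elsewhere, and you share it with the paper. The formula needs the unit ball of $\Span_Q\{c_j^*(x)\}$ to be weak-$^*$ dense in the unit ball of $C_x^\vee$, i.e.\ a $1$-norming M-basis, and neither Banach--Alaoglu nor Goldstine gives this. In the summing-basis example, every functional in $\Span\{s_n^*\}$ has coordinate sum $0$. Hence those of norm at most $1$ satisfy $|y(e_1)|\le 1/2$, although $\|e_1\|_\infty=1$. The dual argument therefore settles the lemma for $1$-norming bases (orthonormal, shrinking, monotone Schauder). For a general M-basis the vector case needs an additional argument. For Polish $M$, one option is to write $\{x:\|u_x\|<r\}$ and its complement as projections of Borel sets and apply Suslin's theorem. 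The first set uses fast Cauchy sequences in $S_Q$ whose coordinates tend to those of $u_x$; the second uses such a sequence together with a norm-one functional.
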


\begin{proof}
   A well known consequence of the Hahn-Banach theorem is that the norm of $T_x$ can be expressed as
\[\|T_x\|_x=\sup_{\substack{\|\phi\|\leq 1 \\ \|v\|\leq 1}}|\phi T_x v|\]
where $\phi \in C_x^\vee$ and $v\in B_x$. 

    A consequence of the Banach–Alaoglu Theorem is that the subset $\{\phi\in S_Q^\vee:\|\phi\|\leq 1\}$
    is a weak-$^*$ dense subset of the unit ball of $C_x^\vee$. It follows that the above supremum can be replaced with the countable supremum over the countable subsets $\{\phi\in S_Q^\vee:\|\phi\|\leq 1\}$ and $\{v\in S_Q:\|v\|\leq 1\}$.
    Since all functions of the form $\phi T_x v$ with $\phi\in S_Q^\vee$ and $v\in S_Q$ are measurable by Lemma \ref{matrix coefficients are measurable maps}, it follows that the supremum over countably many measurable functions is also measurable.

    The next statement comes by setting the source and target abstract Banach bundles as the constant abstract Banach bundle $(k,\{1\})_{x\in M}$ respectively. 
\end{proof}

Incidently, while the property of measurability for the functions 
\[x\mapsto \left\|\sum_k\beta_kb_i(x)\right\|_x, \qquad \beta\in k^{< \omega}\]
is part of the necessary data, the same statement for the set of finite spans of the biorthonormal functions 
$\{x\mapsto \|\sum_k\beta_kb^*_i(x)\|_x: \beta\in k^{< \omega}\}$ is a Theorem.

\begin{lemma}
    Let $(v_x)\in (B_x)$ be a measurable family. Let $n\in \N$ and $B^n_x$ be the span of $b_1(x),\cdots ,b_n(x)$. Then the function of $x$
    \[\operatorname{Dist}(v_x,B_x^n):=\inf_{w_x\in B_x^n} \|v_x-w_x\|_x\]
    is measurable.
\end{lemma}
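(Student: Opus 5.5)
We reduce the infimum over the finite-dimensional space $B^n_x$ to a countable infimum over rational coefficient vectors. Then the statement follows if each function $x\mapsto\|v_x-\sum_{i\le n}a_ib_i(x)\|_x$ is measurable.

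\emph{Step 1: countable reduction.} For fixed $x$, the map $a=(a_1,\dots,a_n)\mapsto \|v_x-\sum_{i\le n}a_ib_i(x)\|_x$ is continuous from $\KK^n$ to $\R$. It is in fact Lipschitz with constant $\max_i\|b_i(x)\|\le 1$ for the sup norm on coefficients, using the triangle inequality (ultrametric or not). Since $Q^n$ is dense in $\KK^n$, the infimum over $B^n_x$ equals
\[\operatorname{Dist}(v_x,B^n_x)=\inf_{a\in Q^n}\Big\|v_x-\sum_{i=1}^n a_ib_i(x)\Big\|_x .\]
Some $b_i(x)$ may vanish, but this does not matter: the parametrisation by coefficients is still surjective onto $B^n_x$. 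It therefore suffices to show that each $f_a(x):=\|v_x-w^a_x\|_x$, with $w^a_x:=\sum_{i\le n}a_ib_i(x)$, is measurable; a countable infimum of measurable functions is measurable.

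\emph{Step 2: measurability of $f_a$.} The family $(w^a_x)$ is a constant family of vectors. The morphism $(k)\to(B_x)$, $1\mapsto w^a_x$, is constant basic with matrix coefficients $b_j^*(x)w^a_x=a_j$ (or $0$ where $b_j(x)=0$). We must handle the zero-sets $Z_i$ carefully here: we split $M$ along the finitely many measurable sets $Z_1,\dots,Z_n$, which makes it a step basic family. Since $(v_x)$ is measurable, there are step basic families $(s^m_x)$ with $s^m_x\to v_x$ in norm for almost every $x$. Then $s^m_x-w^a_x$ is a step basic family, since differences of step families are step families by the common-refinement argument in Lemma \ref{lem: composition}. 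By the reverse triangle inequality, $\|s^m_x-w^a_x\|_x\to f_a(x)$ almost everywhere.

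On each piece $M_k$ of the countable partition, $s^m_x-w^a_x$ is a constant family of vectors. By the preceding lemma, its norm is measurable on $M_k$; concretely, for basic vectors the norm is $\|\sum_i\beta_ib_i(x)\|_x$, which is measurable by condition \ref{Condition 4} of Definition \ref{Def: Objects}. Gluing over the countable partition gives measurability of $x\mapsto\|s^m_x-w^a_x\|_x$ on $M$. An a.e.\ pointwise limit of measurable functions is measurable, assuming completeness of $\mu$ or modification on a null set, as the paper implicitly does in Lemma \ref{matrix coefficients are measurable maps}. Hence $f_a$ is measurable.

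\emph{Main obstacle.} The substantive point is Step 2: transferring measurability of norms from constant finite spans, which is given by the data, to an arbitrary measurable section $v_x$. The difficulty is keeping track of the step decomposition together with the zero sets $Z_i$, so that every function involved is literally of the form in condition \ref{Condition 4} on each measurable piece. Step 1 is routine, with one caveat. In the non-Archimedean case the infimum is still approximated on the dense countable set $Q^n$, because the Lipschitz bound in Step 1 uses only the (ultrametric) triangle inequality.
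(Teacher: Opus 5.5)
Your proof is correct and follows essentially the paper's route. You replace the infimum by a countable one over $Q$-coefficient vectors in $B^n_x$, use measurability of norms of constant finite-span families (the preceding lemma, or condition 4 directly), and pass from step families to general measurable families by an a.e.\ limit; the only difference is that you carry out explicitly, for each $\|v_x-w^a_x\|_x$, the reduction to constant families that the paper dismisses as easy. (A harmless slip: in the Archimedean case the Lipschitz constant for the sup norm on coefficients is $\sum_{i\le n}\|b_i(x)\|\le n$, not $\max_i\|b_i(x)\|$, but only continuity is needed.)
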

\begin{proof}
    Extending to (Bochner) measurable families from constant families is easy, so we consider only this restricted situation.  
    By the previous lemma, each function $\|v_x-w_x\|_x$ is a measurable function of $x$.
    Since the above infimum can be replaced with the infimum over the countable dense subset $S_Q$, it follows that the distance is measurable.
\end{proof}

\begin{lemma}{\label{Lem: Rapid convergence}}
    Let $v_x\in (B_x)$ be a measurable section. Then there a sequence of step functions $f_n(x)$ with each $f_n(x)\in B_x^n$ such that for almost all $x\in M$,  $f_n(x)\to v_x$ as $n\to \infty$. 
\end{lemma}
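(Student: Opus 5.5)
Since $(v_x)$ is measurable, there are step families $s_m(x)$ (step basic morphisms $(k)\to(B_x)$) with $s_m(x)\to v_x$ for almost all $x$. Each step family is, on each piece $M_k$ of a countable measurable partition, a constant finite span $\sum_i a_i^{(m,k)} b_i(x)$. The only obstruction to the statement is that $s_m(x)$ need not lie in $B_x^n$ for a prescribed $n$. I therefore plan to reindex so that the $n$-th approximant uses only $b_1,\dots,b_n$ while still converging.

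For each $m$ and $x$, let $N_m(x)$ be the largest index $i$ with $b_i^*$-coefficient of $s_m(x)$ nonzero (with $N_m(x)=0$ if $s_m(x)=0$). Because $s_m$ is constant on each $M_k$, the function $N_m$ is a measurable step function with values in $\N$, and $s_m(x)\in B_x^{N_m(x)}$. Replacing $N_m$ by $\max(N_1,\dots,N_m)$, I may assume $N_m(x)$ is nondecreasing in $m$ for each $x$.

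Define
\[
m_n(x):=\max\{m\le n: N_m(x)\le n\},
\]
with $m_n(x)=0$ and $s_0:=0$ if the set is empty. Set $f_n(x):=s_{m_n(x)}(x)$. Then $f_n(x)\in B_x^n$ by construction. Each $f_n$ is a step function: $m_n$ takes finitely many values on measurable sets $\{x: m_n(x)=m\}$, which are built from level sets of the measurable step functions $N_1,\dots,N_n$. Intersecting these with the partitions of $s_1,\dots,s_n$ gives a countable measurable partition on which $f_n$ is constant.

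For convergence, fix $x$ in the full-measure set where $s_m(x)\to v_x$. For each fixed $m$, once $n\ge \max(m,N_m(x))$ we have $m_n(x)\ge m$, by the monotonicity of $N$ in $m$. Hence $m_n(x)\to\infty$ as $n\to\infty$. Since $m_n(x)$ is nondecreasing in $n$, $f_n(x)=s_{m_n(x)}(x)$ is a subsequence-with-repetitions of a convergent sequence, so $f_n(x)\to v_x$.

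The step I expect to need the most care is the measurability and step-ness of $N_m$ and the rearranged $f_n$. This reduces to the countable partitions, together with the fact that the coefficient functions $x\mapsto b_i^*(x)s_m(x)$ are constant on pieces by the definition of step basic families. The zero sets $Z_i$ are measurable, so if one prefers to count only nonzero $b_i(x)$ in $N_m$, the pieces can be further refined by $Z_i$ without harm.
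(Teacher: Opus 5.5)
Your re-indexing argument is correct only if a measurable section is, by hypothesis, an a.e.\ limit of step \emph{basic} families, i.e.\ step functions whose values on each piece are finite combinations $\sum_i a_i b_i(x)$ with constant coefficients. That is not the hypothesis the lemma works under. The Terminology paragraph opening this subsection says that constant/step/measurable families of vectors are \emph{not necessarily basic}. On each piece the coefficients $b_j^*(x)s_m(x)$ are constant, but they need not be finitely supported. So $N_m(x)=\infty$ is possible on a whole piece (for instance with coefficients $a_j=2^{-j}$ for all $j$), $s_m(x)\notin\Span\{b_i(x)\}$, and the re-indexing cannot place anything in $B_x^n$.

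Moving from such families into the algebraic span is exactly what the lemma is for. The paper uses it in the kernel/cokernel construction (``we may even assume that each $\kappa_i^n(x)$ has image inside the algebraic span'') and in the norm-convergence lemma for elements of $\int^{\oplus,p}_M B_x$, which are only weakly measurable sections. Your first step therefore assumes the substance of the conclusion, and only the easy re-indexing remains. Under the strictly basic reading your proof is valid, but the lemma then says almost nothing.

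The missing idea is the paper's distance argument. You cannot simply truncate coefficient expansions, because an M-basis is not a Schauder basis and $\sum_{i\le n} b_i^*(x)(v_x)\,b_i(x)$ need not converge to $v_x$. Instead, use the preceding lemmas in four steps:
\begin{enumerate}
\item The function $x\mapsto\|v_x-w(x)\|_x$ is measurable for every constant $w\in S_Q$.
\item Hence $d_n(x)=\operatorname{Dist}(v_x,B_x^n)$ is measurable, and $d_n(x)\to 0$ for every $x$ because $\{b_i(x)\}$ has dense span.
\item Enumerate the constant sections in $S_Q\cap B^n$ as $q_1,q_2,\dots$; their values are dense in each $B_x^n$. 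Fix $\epsilon_n\to 0$ and set $f_n(x)=q_k(x)$ for the least $k$ with $\|v_x-q_k(x)\|_x<d_n(x)+\epsilon_n$.
\item The sets on which a given $k$ is selected are measurable, so $f_n$ is a step family with values in $B_x^n$, and $\|v_x-f_n(x)\|_x<d_n(x)+\epsilon_n\to 0$ for every $x$.
\end{enumerate}
This uses only measurability of norm functions, not a basic approximating sequence, so it covers the non-basic families your argument cannot handle.
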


\begin{proof}
    Let $d_n(x):=\operatorname{Dist}(v_x,B_x^n)$, measurable by the previous lemma. Because the sequence $\{b_i(x)\}$ has a dense span in $B_x$, it is clear that $\lim_{n\to \infty}d_n(x)=0$ for every $x\in M$. 

    Let $(\epsilon_n)$ be a sequence of positive real numbers that converge to $0$.
    As $S_Q\cap B_x^n$ is countable and dense in $B_x^n$, we can find some function $f_n:M\to S_Q$ with countable image such that $\|v_x-f_n(x)\|_x<d_n(x)+\epsilon_n$. 
    We therefore see that $f_n(x)$ satisfies the desired conditions. 
\end{proof}

\begin{lemma}[Gram--Schmidt]{\label{Lem: Gram--Schmidt}}
    Let $(B_x)$ be a collection of Banach spaces. Let $\{v_j(x)\}_{j\in \omega}\subseteq  B_x$, and $\{\phi_j(x)\}_{j\in \omega}\subseteq B_x^\vee$ be sequences such that:
    \begin{enumerate}
        \item For all finite sequences $(a_i)\subseteq k$, the function
        \[x\mapsto\left\|\sum_ia_iv_i(x)\right\|_x\]
        is measurable;
        \item For all finite sequences $(a_i)\subseteq k$, the function
        \[x\mapsto\left\|\sum_ia_i\phi_i(x)\right\|_x\]
        is measurable;
        \item For all $i,j\in\omega$, the function $x\mapsto \phi_i(x)\big(v_jclackton
        (x)\big)$ is measurable.
    \end{enumerate}
    Then the Gram--Schmidt procedure produces an M-basis $(b_i(x),b_i^*(x))$ on $B_x$ such that the pair 
    \[(B_x,\{b_i(x)\}_{i\in \omega})\]
    is an abstract Banach bundle in $\cat{mBan}^M$.
\end{lemma}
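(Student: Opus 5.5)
We will run the classical Gram--Schmidt biorthogonalisation (as in Theorem 4.59 of \cite{FHH+11}) pointwise in $x$, and check at each stage that every quantity it produces depends measurably on $x$. Implicitly we assume, as in that theorem, that for each $x$ the $v_j(x)$ span a dense subspace of $B_x$ and the $\phi_j(x)$ separate points; otherwise no M-basis can be obtained. We also assume $\omega=\N$, so that the recursion is a countable induction.

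\textbf{Recursion.} At step $n$, given biorthogonal families $b_1(x),\dots,b_{n-1}(x)$ and $b_1^*(x),\dots,b_{n-1}^*(x)$, we take the first index $j$ for which
\[
v_j(x)-\sum_{k<n} b_k^*(x)(v_j(x))\,b_k(x)
\]
is nonzero. We normalise it to obtain $b_n(x)$. The dual element $b_n^*(x)$ is obtained in the same way from the first $\phi_l(x)$ not annihilated by the correction
\[
\phi_l-\sum_{k\le n}\phi_l(b_k)\,b_k^*.
\]
The two sides alternate, so that both $\{v_j\}$ and $\{\phi_l\}$ are exhausted. Where a side runs out, we set $b_n(x)=0$. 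This produces the zero entries that the definition of abstract Banach bundle allows. The argument at each fixed $x$ is the standard one: the output is biorthonormal, complete because every $v_j(x)$ lies in the span of the $b_k(x)$, and total because every $\phi_l(x)$ lies in the span of the $b_k^*(x)$.

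\textbf{Measurability by induction.} Every $b_n(x)$ and $b_n^*(x)$ has the form
\[
\sum_j \alpha_{nj}(x)\,v_j(x), \qquad \sum_l \beta_{nl}(x)\,\phi_l(x),
\]
a finite combination whose coefficients are built from the pairings $\phi_i(v_j)$ by finitely many algebraic operations, divisions by nonzero quantities, and normalisation by norms. We prove by induction on $n$ that these coefficient functions are measurable, and that the index choices are made on measurable pieces of $M$. The selection ``first $j$ with nonzero residual'' partitions $M$ into countably many sets. Each piece is defined by the vanishing or nonvanishing of norms $\|\sum_i a_i(x)v_i(x)\|_x$ with measurable coefficients, and these norms are measurable by hypotheses 1 and 2.

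\textbf{Main obstacle.} Hypotheses 1 and 2 give measurability of norms only for \emph{constant} coefficient vectors $(a_i)$, while the recursion produces measurable, non-constant coefficients. The key lemma is therefore that $x\mapsto \|\sum_{i\le N} a_i(x)v_i(x)\|_x$ is measurable whenever the $a_i$ are measurable. To prove it, we approximate $(a_i(x))$ by $Q^N$-valued step functions. On each step piece the function is a constant-coefficient norm, hence measurable. These approximations converge pointwise because, for fixed $x$, the norm is continuous in the coefficients, being bounded by $\sum_i|a_i|\,\|v_i(x)\|$. The same argument applies on the dual side. It also handles the normalising factors $\|b_n(x)\|$ and $\|b_n^*(x)\|$, and the zero sets $Z_i$, which are preimages of $0$ under these measurable norms.

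\textbf{Conclusion.} Condition \ref{Condition 4} of Definition \ref{Def: Objects} follows by applying the same lemma: each finite combination $\sum_i a_ib_i(x)$ is a combination of the $v_j(x)$ with measurable coefficients, so its norm is measurable. Hence $(B_x,\{b_i(x)\})$ is an object of $\cat{mBan}^M$.
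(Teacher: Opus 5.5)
Your proposal is correct and follows essentially the same route as the paper: Markushevich's alternating biorthogonalisation from Theorem 4.59 of \cite{FHH+11} is run pointwise in $x$, the coefficients (built from the pairings $\phi_i(v_j)$) are checked inductively to be measurable on countably many measurable pieces, and Axiom 4 is deduced by approximating measurable coefficients with countably-valued step functions. Your ``key lemma'' is precisely the step the paper compresses into ``one can always express such an expression as a limit of step functions.'' The density and totality hypotheses you flag are indeed needed, and they are left implicit in the paper's statement.
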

\begin{proof}
    The proof is essentially Markushevich's proof of the existence of M-bases using his modified version of the Gram--Schmidt algorithm found in Theorem 4.59 of\cite{FHH+11}. This calculates the M-basis $(b_i(x),b_i^*(x))_{i\in \omega}$ step by step by defining each pair $(b_i(x),b_i^*(x))$ inductively as below, on odd steps defining $b_i(x)$ first, and on even steps defining $b^*_i(x)$ first. 
    \begin{align*}
        h_i(x)&:=\min\big\{h\in \N:  v_{h}(x)\notin\Span\{b_j(x):j<i\}\big\} \\
    b_i(x)&:=v_{h_i}(x)-\sum_{j<i}\Big[b^*_j(x)\big(v_{h_i}(x)\big)\Big]b_j(x)
\end{align*}
\begin{align*}
    k_i(x)&=\min\{k\in \N: \phi_i(x)\big(v_{k_i}(x)\big)\neq 0\}\\
    b^*_i(x)&:=\frac{1}{\phi_{k_i}(x)\big(b_i(x)\big)}\left(\phi_{k_i}(x)-\sum_{j<i}\Big[\phi_{k_i}(x)\big(b_{j}(x)\big)\Big]b^*_j(x)\right)
    \end{align*}
    The key thing to note is that each of these functions of $x$ above is measurable in $x$.

    It is clear that the family $(B_x,\{b_i(x)\})$ satisfies the first 3 axioms of Definition \ref{Def: Objects}, and the fourth arises because each $b_i(x)$ can be rewritten as a finite span of the $v_i(x)$ with finitely many measurable coefficients. Since one can always express such an expression as a limit of step functions, Axiom 4 arises from the hypothesis that $\left\|\sum_ia_iv_i(x)\right\|_x$ is measurable.
    
\end{proof}

\begin{prop}{\label{Prop: pointwise iso implies iso}}
    Let $(B_x,\{b_i(x)\}_{i\in \omega})$ and $(C_x,\{c_i(x)\}_{i\in \omega})$ be two abstract Banach bundles. Suppose further that the M basis of $C_x$ is almost everywhere a Schauder basis. If $(T_x):(B_x)\to (C_x)$ is a morphism in $\cat{mBan}^M_\omega$ such that each $T_{x}:B_x\to C_x$ is an isomorphism, then $(T_x)$ is an isomorphism.
\end{prop}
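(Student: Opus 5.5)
The plan is to show that the pointwise inverse family $(T_x^{-1})_{x\in M}$, which consists of bounded operators by the open mapping theorem, is itself a Bochner measurable morphism $(C_x)\to(B_x)$. Once that holds, $(T_x^{-1})$ is a morphism in $\cat{mBan}^M_\omega$. Its compositions with $(T_x)$ are the identity families, which are constant basic and therefore morphisms by Lemma \ref{lem: composition}. So $(T_x)$ is an isomorphism.

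The first step is weak measurability of $(T_x^{-1})$, meaning measurability of the matrix coefficients $x\mapsto b_j^*(x)T_x^{-1}c_i(x)$. Here the Schauder hypothesis on $(c_i(x))$ is used. For almost every $x$ the partial-sum projections $P^n_x=\sum_{k\le n}c_k^*(x)(\cdot)c_k(x)$ converge strongly to the identity on $C_x$, and they are uniformly bounded in $n$ by the basis constant $K_x$.

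I will approximate $T_x^{-1}c_i(x)$ by elements of the finite spans $B^m_x$. Concretely, I will choose (using $S_Q$ and countable-image selections, as in the proof of Lemma \ref{Lem: Rapid convergence}) step families $w_m(x)\in \Span_Q\{b_k(x)\}$ that nearly minimise $\|T_xw_m(x)-c_i(x)\|_x$ among vectors of norm at most $m$. This function is measurable, because $T_x w$ for constant $w$ is a measurable vector family (Lemma \ref{matrix coefficients are measurable maps} together with the norm lemma). The infimum tends to $0$ since $T_x$ is surjective. Because $\|T_x^{-1}\|$ is finite pointwise, $w_m(x)\to T_x^{-1}c_i(x)$. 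Hence $(T_x^{-1}c_i(x))_x$ is a measurable vector family and its coefficients are measurable.

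The second step upgrades this to Bochner measurability in operator norm. I will build step basic approximants of the form
\[R^{n}_x = \sum_{i\le n} c_i^*(x)(\cdot)\, w_{m(n)}^{(i)}(x).\]
These are basic and have step-function matrix coefficients, with $w^{(i)}$ the step approximants from the first step. I will then estimate $\|R^n_x-T_x^{-1}\|$ by splitting it as
\[\|R^n_x - T_x^{-1}P^n_x\| + \|T_x^{-1}(P^n_x-\id)\|.\]
The first term is controlled by choosing the $w^{(i)}$ close enough relative to $\|c_i^*(x)\|$, as in the proof of Proposition \ref{Prop: Decent=Everything}. The Schauder property and $\|T_x^{-1}\|<\infty$ are what must handle the second term.

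The main obstacle is that second term. $P^n_x\to\id$ only strongly, not in operator norm, so pointwise operator-norm convergence cannot come from truncation alone. I would first try to avoid truncation entirely. The idea is to write $T_x^{-1}=\lim_n S^n_x$ differently: take step basic $S^n_x\to T_x$ in norm (which exists by hypothesis), so that $S^n_x$ is eventually invertible with $(S^n_x)^{-1}\to T_x^{-1}$ in norm by a Neumann series argument. Then it would suffice to show that the inverse of a step basic operator is a Bochner measurable family. On each piece where it is constant, this reduces to the inverse of a constant basic operator, using the Schauder basis to express $(S^n_x)^{-1}$ through finite sections. Measurability of the sets where $\|S^n_x-T_x\|<\|T_x^{-1}\|^{-1}$ follows from measurability of $x\mapsto\|T_x^{-1}\|$, which is a countable supremum by the first step.
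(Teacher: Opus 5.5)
There is a genuine gap in your second step. Your first step is sound: the near-minimiser selection from countably many candidates in $\Span_Q\{b_k(x)\}$ shows that every column $(T_x^{-1}c_i(x))_x$ is a Bochner measurable vector family, and that $x\mapsto\|T_x^{-1}\|$ is measurable. The operators $R^n_x$ you then build have finite rank, while $T_x^{-1}$ is an isomorphism between infinite-dimensional spaces. Since $\id-T_xR^n_x=T_x(T_x^{-1}-R^n_x)$, and $\|\id-K\|\geq 1$ for every compact $K$ on an infinite-dimensional space, you get $\|R^n_x-T_x^{-1}\|\geq\|T_x\|^{-1}$ for every $n$. So the tail term is bounded below, not merely hard to control, and the Schauder property cannot rescue any truncation scheme.

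Your fallback is the route the paper takes: approximate $T_x$ by step basic $S^n_x$, invert, and use the Neumann series. The Neumann-series part is fine. But the step that carries all the weight, that $(S^n_x)^{-1}$ is an operator-norm limit of step basic families, is only asserted. The mechanism you suggest, finite sections via the Schauder basis, again produces finite-rank operators and fails for the same reason. The reduction also gains nothing. On a piece where $S^n$ has constant coefficients, $(S^n_x)^{-1}$ still acts between completions with $x$-dependent norms, so you face exactly the problem you had for $T_x^{-1}$.

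The missing idea is the \emph{untruncated} approximation of Proposition \ref{Prop: Decent=Everything}, which is what the paper invokes at this point. Its approximants are basic but of infinite rank, and together with your first step it applies to $T_x^{-1}$ directly. Since $x\mapsto\|c_i^*(x)\|$ is measurable, for every $i$ you can choose measurably a countably-valued $w^{(i)}(x)\in\Span_Q\{b_k(x)\}$ with $\|w^{(i)}(x)-T_x^{-1}c_i(x)\|_x<2^{-i}\epsilon/\|c_i^*(x)\|$. Take the least index in your approximating sequence that falls below this measurable threshold. Then set $R^\epsilon_x c_i(x):=w^{(i)}(x)$ for \emph{all} $i$. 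For a finite sum $c=\sum_i a_ic_i(x)$ you have $a_i=c_i^*(x)c$, so
\[\|(R^\epsilon_x-T_x^{-1})c\|_x\leq\sum_i\|c_i^*(x)\|\,\|c\|_x\,\|w^{(i)}(x)-T_x^{-1}c_i(x)\|_x\leq\epsilon\|c\|_x.\]
Hence $R^\epsilon_x$ extends to a bounded basic operator with countable-image measurable coefficients, so it is step basic in the sense of Definition \ref{Def: morphisms}, and $\|R^\epsilon_x-T_x^{-1}\|\leq\epsilon$ almost everywhere. Taking $\epsilon=1/k$ shows $(T_x^{-1})$ is a morphism. This needs no Neumann series and no use of the Schauder hypothesis, which the paper's proof does not visibly use either.
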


\begin{proof}
    Suppose that $(T_x):(B_x)\to (C_x)$ is a measurable map which is a topological isomorphism at each $x\in M$. 
    Let $(T^n_x)$ be a sequence of step basic families that converges to $(T_x)$. Set $(S_x^n)$ to be the family defined by 
    \[S_x^n:=\begin{cases}
        (T^n_x)^{-1} & \textrm{if $T_x^n$ is invertible} \\
        0 & \textrm{otherwise.}
    \end{cases}\]
    While each $S_x^n$ is not necessarily itself basic, it can be approximated arbitrarily well by step basic families by Proposition \ref{Prop: Decent=Everything}.
    It is clear that $(S_x^n)$ converges pointwise to the inverse of $(T_x)$, since for any $x\in M$, some tail of $T_x^n$ consists only of invertible operators. 
\end{proof}

\subsection{$\cat{mBan}_{\N}^M$ is an elementary Quasi-abelian category}

It is routine to show that $\cat{mBan}^M_{\omega}$ is a $\KK$-linear additive category: $\Hom$ sets are enriched in $\cat{Vect}$, the zero family is the zero object, and because $\omega$ is infinite, it admits finite biproducts through some (in fact, any) choice of bijection $\coprod_{i=1}^n\omega\cong \omega$. 

In general however, one cannot expect that $\cat{mBan}^M_\omega$ is abelian, even when $M=\{*\}$, for the same reason that the category of Banach spaces is not abelian; while one does have a notion of kernels and cokernels, the natural norms obtained on the image and coimage of a map $f$ need not coincide, or even be equivalent. These categories are instead quasi-abelian.

We restrict our attention to the case $\omega=\N$ and consider the category $\cat{mBan}^M_\N$ of separable abstract Banach bundles, whose objects are pointwise separable Banach spaces. The rest of the section is dedicated to showing that $\cat{mBan}^M_\N$ is an (countably cocomplete) elementary\footnote{Formally, it is incorrect to say that the category is elementary, since part of this definition requires cocompleteness. Due to the obvious bound on the `size' of objects, it cannot include uncountable coproducts. This is minor, and we discuss this in due time.} quasi-abelian category\footnote{For a full definition of the adjective `elementary' and `quasi-abelian', we point the reader to Definitions 2.1.10 and 1.1.3 of \cite{Schneiders1999} respectively.}.

\begin{lemma}\label{Lem: ker and coker}
    The category $\cat{mBan}^M_\N$ contains kernels and cokernels.
\end{lemma}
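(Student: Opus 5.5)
Given a morphism $(T_x):(B_x,\{b_i(x)\})\to(C_x,\{c_i(x)\})$ in $\cat{mBan}^M_\N$, I will build the kernel and cokernel fibrewise and then equip them with measurable M-bases via Lemma \ref{Lem: Gram--Schmidt}. Set $K_x:=\ker T_x\subseteq B_x$ and $Q_x:=C_x/\overline{T_x(B_x)}$ with the quotient norm. These are separable Banach spaces, and pointwise they are the kernel and cokernel in $\cat{Ban}$. The work is to produce generating data satisfying the hypotheses of Lemma \ref{Lem: Gram--Schmidt}, to check that the inclusion $K_x\hookrightarrow B_x$ and the projection $C_x\to Q_x$ are measurable morphisms, and to verify the universal properties.

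\textbf{Cokernel.} This is the easier case. I take $v_j(x):=\pi_x(c_j(x))\in Q_x$. For the functionals I take an enumeration $\phi_j(x)$ of the elements $\psi\in S_Q^\vee(C)$ restricted to $\overline{T_xB_x}^{\perp}$. To get measurable functionals annihilating the image, I first try a Gram--Schmidt-type correction: enumerate the measurable sections $T_xb_i(x)$ and correct each $c_j^*(x)$ by subtracting measurable combinations so that it vanishes on them. Failing that, I would use the weak$^*$ description: $Q_x^\vee\cong(\operatorname{im}T_x)^\perp$. The norm measurability in hypothesis 1 follows from
\[\Big\|\sum a_jv_j(x)\Big\|_{Q_x}=\operatorname{Dist}\Big(\sum a_jc_j(x),\,\overline{T_xB_x}\Big)=\inf_{w\in S_Q(B)}\Big\|\sum a_jc_j(x)-T_xw\Big\|,\]
which is a countable infimum of functions that are measurable by the lemma on norms of constant families (extended to measurable families as in the distance lemma). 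The universal property holds pointwise. Measurability of the induced map $\bar S_x:Q_x\to D_x$, for $S$ with $ST=0$, follows by approximating $S$ with step basic families and composing with the quotient. Since the quotient map has norm $\le1$, I would use Proposition \ref{Prop: Decent=Everything} to re-approximate with basic operators relative to the new M-basis.

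\textbf{Kernel.} This is the main obstacle, because there is no projection onto $K_x$, so the vectors $b_i(x)$ do not give vectors in $K_x$. My plan is to produce a countable family of measurable sections $v_j(x)\in K_x$ that is pointwise dense, using a measurable selection argument. For each $w\in S_Q(B)$ and each $n$, the function $x\mapsto \operatorname{Dist}(w,K_x)$ should be measurable. Via Lemma \ref{Lem: Rapid convergence}-style approximations, I would then choose step-function vectors $u\in S_Q$ with $\|T_xu\|$ small and $\|u-w\|<\operatorname{Dist}(w,K_x)+\epsilon$. A rapidly convergent correction scheme, applying the open mapping theorem on $\overline{\operatorname{im}}$ fibrewise with measurable constants, then passes to a limit in $K_x$. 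This step is where I expect the real difficulty: the open-mapping constants are not uniform in $x$. I would handle this by working on the measurable pieces $M_m=\{x:\text{constant}\le m\}$.

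For the functionals on $K_x$, I take the restrictions $\phi_j(x)=b_j^*(x)|_{K_x}$. These are total on $K_x$, their norms are measurable as countable suprema over the $v_j$, and the pairings $\phi_i(v_j)$ are measurable. Lemma \ref{Lem: Gram--Schmidt} then yields an abstract Banach bundle $(K_x)$. The inclusion is measurable because its matrix coefficients are measurable and its columns are measurable sections. The factorization of any $S$ with $TS=0$ through $K_x$ is pointwise unique and is measurable by the same approximation argument.
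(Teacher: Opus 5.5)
\textbf{There is a genuine gap in the kernel construction, exactly at the step you flagged.} Your cokernel norm formula is correct: the distance to $\overline{T_xB_x}$ is a countable infimum over $T_x(S_Q)$.

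Your correction scheme applies the open mapping theorem to $T_x\colon B_x\to\overline{\operatorname{im}T_x}$. That is only available when $\operatorname{im}T_x$ is closed, i.e.\ when $T_x$ is strict. The lemma must handle non-strict morphisms, which are precisely the ones that make the category only quasi-abelian. For those, no constant exists, so your pieces $M_m$ can all be empty. Worse, a vector with $\|T_xu\|$ small need not be near $K_x$. For example, take $T_x(a,v)=(0,Dv)$ on $\KK\oplus\ell^2$ with $D=\operatorname{diag}(1/n)$. Then $\|T_x(0,e_n)\|=1/n$ but $\operatorname{Dist}((0,e_n),K_x)=1$.

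Two further points are not supplied:
\begin{enumerate}
\item You assert, but do not prove, that $x\mapsto\operatorname{Dist}(w,K_x)$ is measurable. The countable-supremum trick used elsewhere in the paper does not give it. By Hahn--Banach it is a supremum over the unit ball of $(\ker T_x)^\perp$, the weak$^*$ closure of $\operatorname{im}T_x^\vee$, and the range of an adjoint need not be norming.
\item The cokernel half has the dual problem. A total sequence of functionals on $Q_x$ must be chosen measurably inside $(\operatorname{im}T_x)^\perp=\ker T_x^\vee$. Subtracting finitely many measurable multiples from $c_j^*(x)$ cannot make it vanish on the infinitely many vectors $T_xb_i(x)$. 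Saying ``use $Q_x^\vee\cong(\operatorname{im}T_x)^\perp$'' does not say how to make that choice measurable.
\end{enumerate}

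\textbf{How the paper proceeds.} It avoids fibrewise correction entirely. After rescaling to $\|T_x\|\le 1$, $(T_x)$ induces a bounded operator $\int^{\oplus,1}_MB_x\to\int^{\oplus,1}_MC_x$ between separable Banach spaces. Its kernel and cokernel exist in $\cat{Ban}$ whether or not the range is closed. Measurable representatives $\kappa_i(x)$ of an M-basis of this global kernel are automatically sections of $\ker T_x$ almost everywhere. Gram--Schmidt is then run on $(\kappa_i(x),b_i^*(x))$; your restricted $b_j^*$ match this. Finally, Egorov's theorem with errors $2^{-ni}$ shows the inclusion is a limit of step basic families.

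\textbf{The missing ingredient.} In the paper's route, one point still needs an argument: that the $\kappa_i(x)$ span a dense subspace of $\ker T_x$ off a null set. The paper asserts this. It is itself a measurable-selection statement, and can be proved as follows:
\begin{enumerate}
\item Code vectors of $B_x$ as rapidly $\|\cdot\|_x$-Cauchy sequences of $Q$-rational combinations of the $b_i(x)$.
\item Then the set $\{(x,v): v\in\ker T_x,\ \|v\|_x\le 1,\ v\notin\overline{\Span}\{\kappa_i(x)\}\}$ becomes a measurable subset of $M$ times a Polish space.
\item Apply the measurable projection and von Neumann selection theorems, using completeness of $\mu$, to obtain a section that contradicts density.
\end{enumerate}
That selection tool, or the global $L^1$ kernel, is what must replace your open-mapping correction. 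The same tool also supplies the cokernel functionals, for instance by measurably selecting norming functionals from the weak$^*$-compact metrizable dual ball of $Q_x$. The paper also leaves the cokernel functionals implicit.
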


\begin{proof}

Let $(T_x):(B_x,\{b_i(x)\}_{i\in \N})\to (C_x,\{c_i(x)\}_{i\in \N})$ be a measurable morphism in $\cat{mBan}^M$. In the Archimedean case, consider the set 
\[\int^{\oplus,1}_MB_x dx:=\left\{(v_x)\in \Hom_{\cat{mBan}^M}(\mathds{1},(B_x)):\int_M\|v_x\|_{B_x}dx<\infty\right\}_{/\sim}\]
where two families $(v_x)\sim (w_x)$ are considered to be equivalent if they are almost everywhere equal. It is routine to show that this is in fact a Banach space with norm $\|\cdot\|_{\oplus,1}:=\int_M\|\cdot\|_{B_x}dx$.
Let $Q$ be a countable dense subfield of $\KK$ as before. Because each $B_x$ is separable, and because there is a countable base $\Bc$ on the topology on $M$, it can be easily seen that the set 
\[\Sc(Q,M):=\left\{\sum_{i=1}^\infty \mathds{1}_{M_i} v_i: v_i\in \Span_Q\{b_i(x)\}, M_i\in \Bc \right\}\]
consisting of step functions taking values $v_i\in\Span_{Q}\{b_i\}$ and with preimages in the countable base $\Bc$, is both countable and dense, so that this Banach space is separable. 

In the non-Archimedean case, $\int^{\oplus,1}_MB_x dx$, as defined is not a Banach space, so we instead simply use this notation to consider the closure of the countable set $\Sc(Q,M)$ under the non-Archimedean $\sup$-norm. After that, the rest of the proof works just the same as in the Archimedean case.

We can, without loss of generality, assume that $\|T_x\|\leq 1$ for all $x$. If not, one can replace $T_x$ with the scalar multiple 
\[\tilde{T}_x:=\begin{cases}
    T_x & \textrm{if } \|T_x\|\leq 1\\
    \frac{T_x}{\lambda_x} &\textrm{with $\lambda_x$ a fixed scalar of norm $\|T_x\|$ otherwise}
\end{cases}\footnote{Of course, in the Archimedean case, one can just take $\lambda_x:=\|T_x\|$}\]
where it is easy to see that $\tilde{T}_x$ is the pointwise limit of some sequence $\tilde{S}_x^n$ of step basic families modified in the same way from some sequence of families that approach $T_x$ in the limit, and since the association $T_x\mapsto \tilde{T}_x$ is a linear isomorphism in $\cat{mBan}^M$, and consequently $(\operatorname{co-})\ker T_x=(\operatorname{co-})\ker\tilde{T}_x$ for each $x\in M$, we can make this swap without any loss of generality.

Because $\|T_x\|\leq 1$, we find that the family $T_x$ gives rise to a bounded map between separable Banach spaces
\[T:\int^{\oplus,1}_MB_xdx\to \int^{\oplus,1}_MC_xdx\]
and we see that the kernel and cokernel (arising from the quasi-abelian structure of $\cat{Ban}$), $\ker T\subseteq \int^{\oplus,1}_MB_xdx$ (respectively. $\coker T=\int^{\oplus,1}_MC_xdx/\overline{\im T}$) is therefore separable and equipped with a natural inclusion $\iota: \ker T \hookrightarrow \int^{\oplus,1}_MB_xdx$ (resp. surjection $\pi: \int^{\oplus,1}_MC_xdx\twoheadrightarrow \coker T$).

As with all separable Banach spaces, the Markushevich basis existence theorem guarantees that $\ker T$ (resp. $\coker T$) has an $M$-basis $(\kappa_i,\kappa_i^*)$ (resp. $(\gamma_i,\gamma_i^*)$).

In the case of kernels, we can view each $\kappa_i$ as an element of $\int^{\oplus,1}_MB_xdx$, and thus see there is a Bochner measurable representative $(\kappa_i(x))\in (B_x)$ in the equivalence class of $\kappa_i$, and in the cokernel case, each $\gamma_i$ can be lifted to a Bochner measurable representative $(\gamma_i(x))\in (C_x)$.

In both situations, since the span of $(\kappa_i)_{i\in \omega}$ (resp. $(\gamma_i)_{i\in \omega}$) is dense in $\ker T$ (resp. $\coker T$), we can choose each $\kappa_i(x)$ (resp. $\gamma_i(x)$) so that, outside a measure zero set $Z\subseteq M$, the span of $(\kappa_i(x))_{i\in \N}$  (resp. $(\gamma_i(x))_{i\in \N}$ is dense in $\ker T_x\subseteq B_x$ (resp. $\coker T_x\twoheadleftarrow C_x$).

For those $z\in Z$ where $(\kappa_i(z))_{i\in \omega}$ does not span $\ker T_z$, we can take any $M$ basis $b_i(z)$ we like, and because the measure $\mu$ on $M$ is complete, the combined family is measurable. The cokernel works similiarly. 
We thus find $\{\kappa_i(x)\}\subseteq B_x$ (resp. $\{\gamma_i(x)\}\subseteq C_x$) is a Bochner- measurable family, whose closed span is $\ker T_x$ (resp. surjects onto $\coker T_x$).

The next step is to construct families $\tilde{\kappa}_i(x)$ (resp. $\tilde{\gamma}_i(x)$) so that, for each $x\in M$, they form a minimal set whose closed span is $\ker T_x$ (resp. $\coker T_x$). 
This is simply an application of Gram--Schmidt (See Lemma \ref{Lem: Gram--Schmidt}) on the pairs $(\kappa_i(x),b^*_i(x))$ and $(\gamma_i(x),\gamma^*_i(x))$.

Finally, we want to show that the natural inclusion 
\[\iota=(\iota_x):(\ker T_x,\{\tilde{\kappa}_i(x)\})\to (B_x,\{b_i(x)\})\]
given by $\iota_x:\ker T_x\subseteq B_x$ for each $x\in M$
defines a measurable morphism $\ker(T_x)\to (B_x)$. 

Let $\kappa_i^n(x)$ be a sequence of step families in $(B_x)$ that converges pointwise to $\tilde{\kappa}_i(x)$. By Lemma \ref{Lem: Rapid convergence}, we may even assume that each $\kappa_i^n(x)$ has image inside the algebraic span of $\{b_i(x)\}$. 

By Egorov's Theorem, we can throw away an arbitrarily small subset $S$ of $M$, and assume that $\kappa^n_i(x)$ converges uniformly to $\tilde{\kappa}_i(x)$. By considering subsequences, we may also assume the stronger condition
\[\|\kappa_i^n(x)- \tilde{\kappa}_i(x)\|_x<2^{-ni}.\]
Let $\iota_x^n$ be the map that sends $\tilde{\kappa}_i(x)\in \ker T_x$ to $\kappa_i^n(x)\in B_x$.  Firstly, one finds that each $\iota_x^n$
is a step basic family. Secondly, for all $x\in M\setminus S$, and any $(a_i)\in k^{<\omega}$ bounded above by $A$, 

\begin{align*}
    \left\|\iota^n\left(\sum_{i}a_ib_i(x)\right)-\iota\left(\sum_{i}a_ib_i(x)\right)\right\|_x &=\left\|\sum_ia_i(\iota^n-\iota)\big(b_i(x)\big)\right\|_x \\
    &\leq A\sum_i\|\kappa^n_i(x)-\kappa_i(x)\|_x \\
    &< A\sum_i 2^{-in}=A2^{1-n}\xrightarrow{n\to \infty}0.
\end{align*}
from which one can see that
$\iota_x^n\to \iota_x$ in the $\|\cdot\|_x$-norm topology. Since the set $S$ is arbitrarily small, we can extend this pointwise convergence to almost all $x\in M$. This completes the proof. 
\end{proof}

An important property of this construction is that kernels and cokernels are not only determined locally - but they are determined \emph{pointwise}. That is, the Banach spaces $\ker(T_x)$ and $\coker(T_x)$ are determined by the Banach spaces $B_x,C_x$ and the operator $T_x$. When determining $\ker T_x$, both the properties of other operators $T_{x'}$ for $x'\neq x$ and their behaviour in small neighbourhoods around $x$ are completely irrelevant. 
This property is maintained of course by any construction (such as images and coimages) involving kernels and cokernels, and any property of a morphism that can be expressed in terms of kernels and cokernels (such as strictness) - which can thus be checked pointwise.
This is an important fact that will rear its head later on.

\begin{theorem}{\label{Thm: mBan QA}}
     In the category $\cat{mBan}^M_\N$, kernels are stable under pushout and cokernels are stable under pullbacks. Thus, it forms a quasi-abelian category.
\end{theorem}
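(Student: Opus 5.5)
We will reduce everything to the pointwise statement in $\cat{Ban}$, using the fact stressed after Lemma \ref{Lem: ker and coker}: kernels and cokernels in $\cat{mBan}^M_\N$ are computed fibrewise. Recall that $\cat{mBan}^M_\N$ is additive and has kernels and cokernels, by Lemma \ref{Lem: ker and coker}. It therefore has all finite limits and colimits. The pullback of $(f_x):(A_x)\to(C_x)$ and $(g_x):(B_x)\to(C_x)$ is the kernel of $(f_x,-g_x):(A_x\oplus B_x)\to(C_x)$, and pushouts are defined dually as cokernels.

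The first step is to check that these constructions are fibrewise. Concretely, the pullback object at $x$ is the Banach pullback $A_x\times_{C_x}B_x$, equipped with the measurable M-basis produced in Lemma \ref{Lem: ker and coker}, and the pushout is the Banach pushout $(A_x\oplus B_x)/\overline{\{(f_xv,-g_xv)\}}$. Two points need care here. The direct sum $(A_x\oplus B_x)$ must be given an interleaved M-basis, and measurability of the norms in condition~\ref{Condition 4} then follows from that of the summands. We also need the universal property: a cone $(D_x)\to(A_x),(B_x)$ with measurable legs must factor through the kernel by a \emph{measurable} family. For this I would show the general fact that if $(h_x):(D_x)\to(B_x)$ is measurable and factors pointwise through $\iota_x:\ker T_x\hookrightarrow B_x$, then the factorisation is measurable. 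The plan is to approximate $h_x$ by step basic families landing in $\Span\{b_i(x)\}$, project these approximations into $\ker T_x$, and re-approximate them by step families in $\Span\{\tilde\kappa_i(x)\}$. Egorov's theorem and the rapid-convergence trick of Lemma \ref{Lem: Rapid convergence}, exactly as at the end of the proof of Lemma \ref{Lem: ker and coker}, should handle this. The dual statement for cokernels is easier, because a map out of $C_x$ killing $\im T_x$ descends to the quotient, and the approximations descend with it.

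The second step is to identify strict morphisms. In a category with kernels and cokernels, a kernel is by definition a morphism isomorphic to the kernel of its cokernel. Since both constructions are fibrewise, $(u_x)$ is a kernel in $\cat{mBan}^M_\N$ if and only if, for almost every $x$, $u_x$ is injective with closed image, i.e.\ a kernel in $\cat{Ban}$. The direction from the categorical condition to the pointwise one is just evaluation at $x$. For the converse, I would compare $(u_x)$ with the canonical kernel $\ker(\coker u_x)$: the comparison map is a pointwise isomorphism, so it is a categorical isomorphism by the open mapping theorem combined with Proposition \ref{Prop: pointwise iso implies iso}. Proposition \ref{Prop: pointwise iso implies iso} carries a Schauder-basis hypothesis on the target. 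If that hypothesis fails, I would instead prove directly that the pointwise inverse of a measurable family of isomorphisms is measurable. This would go through the factorisation lemma above, applied to the identity of the target viewed as factoring through the image. The same argument applies dually to cokernels, which are pointwise surjections.

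Finally, the stability statements follow. Take a kernel $(u_x):(A_x)\to(B_x)$ and any $(g_x):(A_x)\to(D_x)$, and form the pushout $(D_x)\to(P_x)$. By the first step $P_x$ is the Banach pushout, and in $\cat{Ban}$, which is quasi-abelian, the pushout of a closed embedding is again a closed embedding. The second step then says that $(D_x)\to(P_x)$ is a kernel in $\cat{mBan}^M_\N$. Dually, cokernels are stable under pullback. I expect the main obstacle to be the measurability of induced maps (the factorisation lemma and the pointwise inverse), because everything else is formal once fibrewise computation is established.
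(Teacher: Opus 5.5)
Your argument is essentially the paper's. The paper's proof only observes that the statement holds pointwise in the quasi-abelian category $\cat{Ban}$, and that pullbacks and pushouts, being built from biproducts, kernels and cokernels, acquire M-bases from Lemma \ref{Lem: ker and coker}. The two points you add are silently assumed there, and you are right that they are where the real work lies. These are the measurability of maps induced by the universal properties, and the identification of kernels/cokernels in $\cat{mBan}^M_\N$ with pointwise closed embeddings/surjections, including measurability of pointwise inverses without the Schauder hypothesis of Proposition \ref{Prop: pointwise iso implies iso}.

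However, two of your mechanisms for these points do not work as stated. You cannot ``project'' approximants of $h_x$ into $\ker T_x$, since $\ker T_x$ need not be complemented in $B_x$. Likewise, an operator-norm approximant of a map $C_x\to D_x$ killing $\im T_x$ need not itself kill $\im T_x$, so it does not descend to $\coker T_x$.

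Instead, approximate the sought factorisation one source basis vector at a time. Its value on the $i$-th source basis vector is $h_xd_i(x)\in\ker T_x$ (resp.\ $h_x$ applied to a measurable lift of $\tilde\gamma_i(x)$), and this is a measurable section of the target bundle by Lemma \ref{Lem: morphisms and Meas. families}. The relevant norm functions are measurable, so you can choose, measurably in $x$, a step $Q$-combination of the target M-basis within $2^{-i-n}/\|e_i^*(x)\|$ of that value, where $e_i^*(x)$ is the $i$-th source dual functional. This gives step basic families converging pointwise in operator norm, exactly as in Proposition \ref{Prop: Decent=Everything}.

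For a factorisation $h_x=u_xg_x$ through an arbitrary measurable family $u$ of closed embeddings, do the same with errors measured in the target of $u$, and divide by the pointwise lower bound of $u_x$. Applied to $h=\id$, this gives measurability of the inverse of any pointwise isomorphism, which covers both of your comparison maps.

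Finally, in $\cat{mBan}^M_\N$ itself (as opposed to the quotient category $\widetilde{\cat{mBan}}^{(M,\mu)}_\N$), your characterisation of kernels must hold at \emph{every} $x$, not almost every $x$. Morphisms there are not identified modulo null sets, so a family that fails to be injective at a single point is not a monomorphism.
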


\begin{proof}
    Forgetting the coherent M-basis, this is true pointwise, because $\cat{Ban}$ is a quasi-abelian category, and the only thing left to do is check that there is a suitable choice of M-basis on the pushout/pullback, respectively. But since such things can be constructed through a combination of biproducts, cokernels, and kernels, this follows from Lemma \ref{Lem: ker and coker}.
\end{proof}

\begin{remark}
As an aside, it is worth pointing out that it is possible that the restriction to separable Banach spaces is unnecessary, though likely further thought is required. 
There are unfortunately (or perhaps interestingly) non-separable Banach spaces with no Markushevich basis, even if they are subspaces of Banach spaces with one. It seems plausible to the authors that if one restricts attention to families of operators $(T_x)$ which are the limit of step \emph{basic} families, then one could perform some transfinite version of the Gram--Schmidt algorithm of Lemma \ref{Lem: Gram--Schmidt} on the M-basis of $(B_x)$ (resp. $(C_x)$) to obtain a suitable M-basis on the (co)kernel. However, this is a complication beyond the scope of our current research, and it is possible that there are technical issues obstructing this strategy. 
\end{remark}

\begin{description}
    \item[Characterisation of $\im$, $\coim$ and strictness:] Since kernels and cokernels are both determined pointwise in $\cat{mBan}^M_{\N}$, it follows that images and coimages can also be determined pointwise, with M-bases determined in the same manner as Lemma \ref{Lem: ker and coker}. Hence, given a morphism 
    $(T_x):(B_x)\to (C_x)$, we have
    \begin{align}
        \coim(T_x)&=(\coim T_x)=(B_x/\ker T_x)\\
        \im(T_x)&=(\im T_x)=\Big(\overline{\{T_xv: v\in B_x\}}\Big)
    \end{align}
    with the pointwise coimage inheriting the norm from $B_x$ and the pointwise image inheriting from $C_x$. 

    Recall that in a quasi-abelian category, a morphism $f:A\to B$ is called \emph{strict} if the natural morphism induced 
    \[\coim(f)\to im(f)\]
    is an isomorphism. It now follows from the above discussion, and from Proposition \ref{Prop: pointwise iso implies iso} that a measurable morphism $(T_x)$ is strict if each $T_x$ is strict. 
        
    We have suppressed the M-bases involved in the notion because they play a secondary role. 
    
\end{description}

\begin{remark}
    We warn that even if $(V_m), (W_m)$ are continuous abstract Banach bundles and $(\phi_m)$ a continuous family, the kernel $\ker(\phi_m)$ need not be continuous. If $M=\R$, and $V_m=W_m=\R$ for all $m$, then the association $m\mapsto m\id_{\R}$ is continuous, but the kernel gives an abstract Banach bundle that has a discontinuity at $m=0$ (which has a $1$ dimensional kernel).
\end{remark}

When one is studying sheaf theory valued in an abelian category $\Ac$, one usually needs the category $\Ac$ to have more structure than that necessitated by abelian-ness. One asks for properties such as cocompleteness, so that one can take (for example) stalks of a sheaf, and to gain the richness of sheaf cohomology, one would also like to have enough injectives. One suitable environment where one guarantees these properties, is when $\Ac$ is a \emph{Grothendieck category} (also known, in Schneiders' terminology \cite{Schneiders1999} as an \emph{elementary abelian category}).

In the quasi-abelian context, the story is similar, where we need to add extra conditions to make it useful for sheaf theory, and as in the abelian case, a good environment is that of elementary quasi-abelian categories (See Definition 2.1.10 of \cite{Schneiders1999}).
These are quasi-abelian categories which are cocomplete and have a set of tiny\footnote{$A$ is tiny if $\Hom(A,\_)$ preserves filtered colimits.} projective objects that generate the category.
The rest of the section is dedicated to proving that, up to a very small caveat regarding uncountable coproducts, $\cat{mBan}^M_{\N}$ satisfies these conditions. 

The first thing to do is note the following result:
\begin{prop}
    The category $\cat{mBan}^M_\N$ is countably cocomplete; that is, it has all countable colimits. 
\end{prop}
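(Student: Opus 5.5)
The plan is to use the standard fact that a category with cokernels and countable coproducts has all countable colimits. A countable colimit of a diagram $D:I\to \cat{mBan}^M_\N$ (with $I$ having countably many objects and morphisms) is the cokernel of a map $\bigoplus_{f:i\to j} D(i)\to \bigoplus_{i} D(i)$. Cokernels exist by Lemma \ref{Lem: ker and coker}, and finite biproducts have already been noted. So it remains to construct countable coproducts $\coprod_{n\in\N}(B^n_x)$.

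For countable coproducts, I would work pointwise, mirroring how cokernels were treated. Given objects $(B^n_x,\{b^n_i(x)\})$, set
\[C_x:=\ell^1\text{-}\bigoplus_n B^n_x\]
in the Archimedean case, and the $c_0$/sup-norm sum in the non-Archimedean case. This is separable, and it is the coproduct in the category of Banach spaces with contractive-type structure maps. For the M-basis, I would fix a bijection $\N\times\N\cong\N$ and take $c_{(n,i)}(x):=b^n_i(x)$, placed in the $n$-th summand, with dual functionals $b^{n*}_i(x)\circ \mathrm{pr}_n$. Completeness, minimality, biorthonormality and totality are inherited summand-wise. The zero sets are unions of measurable sets. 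Condition 4 of Definition \ref{Def: Objects} holds because a finite combination has norm equal to a finite sum (or max) of measurable norms. The inclusions $\iota^n_x$ are constant basic families, so they are morphisms.

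The main obstacle is the universal property in $\cat{mBan}^M_\N$, because morphisms out of the summands need not be uniformly bounded. Given morphisms $(T^n_x):(B^n_x)\to(D_x)$, the induced map $\bigoplus_n T^n_x$ is unbounded on the $\ell^1$-sum when $\sup_n\|T^n_x\|=\infty$. My first fix would be to rescale the norm on the coproduct. For each $x$, give the $n$-th summand the weight $\lambda_n(x):=2^{-n}\|b\|$-style factor, so that $\|(v_n)\|_x:=\sum_n \max(1,\|T^n_x\|)\|v_n\|$. However, this depends on the $T$'s, and the coproduct must be independent of them.

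Instead, I would use that isomorphism classes in this category ignore equivalent renormings. A measurable family of scalars rescales objects isomorphically, exactly as in the $\tilde T_x$ trick in Lemma \ref{Lem: ker and coker}. So I would try to show that the $\ell^1$-sum with weights $1$ nonetheless works: the induced map is defined on the dense span of the $c_{(n,i)}$, and boundedness is needed only pointwise. If pointwise boundedness fails, I would fall back to taking the colimit as the cokernel construction applied to suitably normalized maps. I expect the honest resolution is to check whether the category's coproduct genuinely exists. If it does not, the statement should be read with the countable coproduct formed so that each structure map $\iota^n_x$ has norm at most $1$, and uniqueness of the induced map follows from density of the span and pointwise agreement.

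Measurability of the induced map $(S_x)$ comes from approximating each $T^n_x$ by step basic families $T^{n,k}_x$. Then $S^k_x:=\sum_{n\le k}T^{n,k}_x\mathrm{pr}_n$ is step basic and converges to $S_x$ a.e. This uses an Egorov/diagonal argument as in the proof of Lemma \ref{Lem: ker and coker}, choosing tolerances $2^{-nk}$.
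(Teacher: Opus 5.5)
\textbf{There is a genuine gap, and it cannot be closed, because the statement is false.}

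Your reduction (countable colimits as cokernels of maps between countable coproducts) is the same as the paper's. Your object $\ell^1$-$\bigoplus_n B^n_x$ with the reindexed M-basis is a legitimate object of $\cat{mBan}^M_\N$. The gap is the universal property. You correctly flag that $\sum_n T^n_x\,\mathrm{pr}_n$ may be unbounded, but you never resolve it, and your final measurability paragraph silently assumes it is a bounded operator.

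This step cannot be repaired: countable coproducts of nonzero objects do not exist in $\cat{mBan}^M_\N$. Suppose $(C_x)$ with structure maps $\iota^n$ were a coproduct of countably many copies of $\mathds{1}=(\KK,\{1\})$. Fix $x_0\in M$ and choose nonzero scalars $\lambda_n$ with $|\lambda_n|\geq n\,\|\iota^n_{x_0}(1)\|_{x_0}$. The constant families $\lambda_n:\mathds{1}\to\mathds{1}$ are morphisms, so there would be a morphism $F$ with $F_{x_0}(\iota^n_{x_0}(1))=\lambda_n$ for every $n$. Since $\lambda_n\neq 0$, this forces $\iota^n_{x_0}(1)\neq 0$ and
\[
\|F_{x_0}\|\;\geq\;\frac{|\lambda_n|}{\|\iota^n_{x_0}(1)\|_{x_0}}\;\geq\; n \qquad\text{for all } n,
\]
contradicting that $F_{x_0}$ is a bounded operator. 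With $M$ a point, this is the classical fact that Banach spaces with all bounded maps have no infinite coproducts. Passing to the a.e.-quotient category does not help: take measurable $\lambda_n(x)$ with $|\lambda_n(x)|\geq n\|\iota^n_x(1)\|_x$ instead.

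The paper's one-line argument (``reorder the basis $\omega\times\omega\cong\omega$'') has exactly the omission you spotted. It builds an object and an M-basis but never checks the universal property, so your diagnosis is sharper than the printed proof.

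Your proposed fallbacks do not rescue the statement. Reading it with structure maps of norm at most $1$ does not give a coproduct in $\cat{mBan}^M_\N$, because the universal property must hold for every cocone, not only contractive ones. The $\tilde T_x$ rescaling from Lemma~\ref{Lem: ker and coker} does not transfer either: rescaling the legs $T^n$ individually changes the cocone, so the induced map no longer factors the original one.

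What does survive is finite cocompleteness, from finite biproducts plus cokernels. Genuine countable coproducts require leaving the category, for instance by formally adjoining coproducts, which is the fix the paper itself proposes right after this proposition for the uncountable case.
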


\begin{proof}
    Countable coproducts are simple. One simply needs to reorder the basis $\omega \times \omega \cong \omega$. Since cokernels also exist, it follows that $\cat{mBan}^M_{\N}$ has all countable colimits.
\end{proof}

Here is the afore-mentioned caveat. Strictly speaking, because we demand that $(B_x)$ has a countable M-basis, it is not true that $\cat{mBan}^M$ has uncountable coproducts. The most natural attempt to solve this, by enlarging $\cat{mBan}_{\N}^M$ to a union $\bigcup_\lambda\cat{mBan}_{\lambda}^M$ indexed over limit ordinals $\lambda$, unfortunately doesn't seem to work, because of an earlier remark that non-separable Banach spaces may not have an M-basis. A different approach that is more fruitful, is to simply adjoin formal coproducts to $\cat{mBan}_{\N}^M$, and work in this cocompleted category. 

We end this section by demonstrating the existence of a collection of abstract Banach bundles $\Pc\subseteq \Ob(\cat{mBan}^M_{\N})$ which are a tiny projective generating set of the category. That is, $\Pc$ satisfies:
\begin{enumerate}
    \item (Tiny) For all $P\in \Pc$, and all (countable) filtering systems\footnote{For readers not familiar with this terminology, this is just the categorical analogue of a directed set} $\{E_i\}_{i\in I}$, one has
    \[\Hom(P,\varinjlim_{i\in I}E_i)\cong\varinjlim_{i\in I}\Hom(P,E_i)\]
    \item (Projective) For all $P\in \Pc$, and all strict epimorphisms $E\twoheadrightarrow F$, the morphism
    \[\Hom(P,E)\to\Hom(P,F)\]
    is surjective. 
    \item(Generating) For any non-isomorphic monomorphism $m:E\to F$, there is some $P\in \Pc$ and some map $f:P\to F$ which does not factor through $E$. 
\end{enumerate}

\begin{prop}
    Let $\Pc=\sigma(M)=\{A\subseteq M: A\textrm{ is measurable}\}$ be the set of measurable subsets of $M$.
    For each $A\in \Pc$, let $\mathds{1}_A$ be the family 
    \[(V_x,\{b_i(x)\}_{i\in \N})\]
    where $V_x=\KK$ if $x\in A$ and $V_x=0$ otherwise,
    with the natural M-basis $b_1(x)=1\in V_x$.
    Then $\{\mathds{1}_A\}_{A\in \Pc}$ is a strictly generating set of projective tiny objects.
\end{prop}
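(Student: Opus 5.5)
The whole plan rests on identifying $\Hom(\mathds{1}_A,(E_x))$ with the space of measurable sections of $(E_x)$ over $A$, up to the usual almost-everywhere conventions. A morphism $\mathds{1}_A\to(E_x)$ is a family $T_x:V_x\to E_x$ that vanishes off $A$. For $x\in A$ it is determined by $v_x:=T_x(1)$, and the operator norm $\|T_x\|$ equals $\|v_x\|$. So Bochner measurability of $(T_x)$ is the same as $(v_x)$ being a limit of step families of vectors supported on $A$, where the basic condition is harmless by Lemma \ref{Lem: Rapid convergence}. I would record this as a preliminary lemma:
\[\Hom(\mathds{1}_A,(E_x))\cong\{(v_x)\in\Hom(\mathds{1},(E_x)) : v_x=0 \text{ for } x\notin A\}.\]
The three properties are then checked in turn.

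\emph{Projectivity.} Let $(p_x):(E_x)\to(F_x)$ be a strict epimorphism. By the pointwise characterisation of cokernels and strictness following Theorem \ref{Thm: mBan QA}, each $p_x$ is a surjective strict map, i.e.\ a quotient map up to isomorphism. Given a measurable section $(f_x)$ of $(F_x)$ over $A$, we need a measurable lift $(e_x)$ with $p_xe_x=f_x$. The approach is to approximate $f$ by step sections $f^n$ from Lemma \ref{Lem: Rapid convergence}, arranged so that the increments $f^{n+1}-f^n$ are rapidly summable. Each value of a step section can be lifted with norm controlled by the open mapping constant at $x$; this only needs countably many choices, since step families take countably many values on countably many measurable pieces. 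Summing the lifted increments gives a Cauchy series and hence a measurable lift.

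The main obstacle is that the open-mapping constant $c_x$ of $p_x$ must be measurable in $x$, or at least locally bounded after discarding small sets. I would first try to obtain a measurable bound via the pointwise inverse of the induced isomorphism $\coim(p)\to(F_x)$, which is an isomorphism in $\cat{mBan}^M$ by Proposition \ref{Prop: pointwise iso implies iso} (here one may need to pass to an M-basis that is almost everywhere Schauder, or adapt the proof). The norm of this inverse is then measurable, so rescaling the increments by it controls the lifts. If that fails, the fallback is to partition $A$ into the countably many measurable sets $\{c_x\in[n,n+1)\}$ and lift on each piece separately.

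\emph{Tininess.} For a countable filtered system, colimits in $\cat{mBan}^M_\N$ are built from coproducts and cokernels, both of which are computed pointwise. A section over $A$ of the colimit is then an a.e.\ limit of step sections, and each step section factors through some stage $E_i$ on each of its countably many pieces. The gap is that countably many pieces need not land in a common stage. I expect the argument to have to restrict to the filtered colimits the paper actually uses, or to use countable cofinality.

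\emph{Generation.} Let $m:(E_x)\to(F_x)$ be a monomorphism that is not an isomorphism. Then pointwise $\overline{m_x(E_x)}\ne F_x$ on a set of positive measure, or $m_x$ fails to be strict there. In the first case, the Gram--Schmidt Lemma \ref{Lem: Gram--Schmidt} yields a basis vector $c_i(x)$ of $F_x$ outside the image on a measurable set $A$ of positive measure, and the section $\mathds{1}_A c_i$ does not factor through $m$. In the non-strict case, I would instead use unboundedness of $m_x^{-1}$ on the image to build a section over $A$ with no measurable bounded preimage, by choosing $A$ where the preimage norms are large.
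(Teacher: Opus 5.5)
The generation step fails as written. You assert that a non-isomorphic monomorphism $m:(E_x)\to(F_x)$ fails to have dense image, or fails to be strict, on a set of positive measure. In $\cat{mBan}^M_\N$, however, morphisms are genuine families $(m_x)_{x\in M}$, not classes modulo null sets; that quotient only appears later, as $\widetilde{\cat{mBan}}^{(M,\mu)}$. An inverse must therefore exist at every point. For example, on $M=[0,1]$ with Lebesgue measure take $E_x=\KK$, take $F_x=\KK$ for $x\neq 0$ and $F_0=\KK^2$, and let $m_x$ be inclusion as the first coordinate. This is a monomorphism and not an isomorphism, but it is surjective and strict away from the null set $\{0\}$, so no positive-measure $A$ of the kind you want exists.

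The paper's proof avoids this and makes your case split unnecessary. First, $m$ is pointwise injective, since kernels are computed pointwise. Second, if some $m_x$ is not invertible (the same reduction you implicitly use), the open mapping theorem says $m_x$ is not surjective. Then for $v\in F_x\setminus m_x(E_x)$ the morphism $\mathds{1}_{\{x\}}\to(F_x)$, $1\mapsto v$, cannot factor through $m$. In particular the non-strict case needs no unboundedness argument. Also, morphisms of $\cat{mBan}^M$ need not be bounded, so ``no bounded preimage'' is not the relevant obstruction anyway.

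The other two items are not yet proofs. For projectivity, the claim that lifting step sections ``only needs countably many choices'' is false for varying bundles. On a piece $M_k$ a step section of $(F_x)$ is $x\mapsto\sum_i a_ic_i(x)$, a different vector in a different fibre at each $x$, and $p_x$ varies too. So lifting even one constant section is a measurable-selection problem over uncountably many points.

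The missing ingredient is a selection mechanism. One option:
\begin{enumerate}
\item Enumerate the countable set $S_Q$ of rational combinations of the basis of $(E_x)$.
\item At each $x$, take the first $s$ with $\|p_xs-y_x\|_x<\delta$ and $\|s\|_x$ within $\delta$ of the infimum of the norms of such $s$. That infimum is countable, hence measurable, so this choice is a step section.
\item Iterate on the error $y_x-p_xs$ with summable $\delta_n$ and sum the chosen sections to get a measurable lift.
\end{enumerate}
This uses only that the open-mapping constant is finite at each $x$. Its measurability, the obstacle you single out, is therefore not the real issue.

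The paper does not lift sections at all. It identifies $\Hom(\mathds{1}_A,-)$ with restriction of a bundle to $A$, which is pointwise the identity or zero, and reads off strong exactness and commutation with pointwise-computed colimits from that. For tininess you stop at the factorisation problem you raise without resolving it. The paper's functor-level identification does not engage with that problem either, but that item remains unproved in your proposal.
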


\begin{proof}
    \begin{description}
        \item[1. (Tiny)] One can describe the functor $\Hom(\mathds{1}_A,\_)$ as that associating an abstract Banach bundle $(B_x)$ to the abstract Banach bundle $(B'_x)$ where $B'_x=\begin{cases}
            B_x & \textrm{if } x\in A \\
            0 & \textrm{otherwise}
        \end{cases}$
        Such a restriction functor is either the zero functor or the identity functor pointwise, and therefore preserves any colimit, which can always be calculated pointwise, since that is true for coproducts and cokernels.
        
        \item[2. (Projective)] Since $\Hom(\mathds{1}_A,\_)$ is pointwise either the identity or the zero functor, it is clearly strongly exact. Therefore $\mathds{1}_A$ is projective. 
        \item[3. (Generating)] Let $(A_x)\to (B_x)$ be some non-isomorphic monomorphism. The open mapping Theorem guarantees that this monomorphism is not surjective, so there is some $x\in M$, and some $v\in B_x$ such that $v\notin A_x$. Let $A=\{x\}$. 
        Then the morphism $\mathds{1}_A\to (B_x)$ given by $1\mapsto v$ does not factor through $(A_x)$.
    \end{description}
\end{proof}

\subsection{Some related categories}

Henceforth, we fix $\omega=\N$ and we suppress the index in the notation, e.g. we write $\cat{mBan}^M$ in place of $\cat{mBan}^M_\N$.

This section, is dedicated to a discussion of some categories that are closely related to $\cat{mBan}^M$. 

\begin{description}
    \item[Bounded abstract Banach bundles:] A category of great importance in this paper is $\cat{bBan}^M=\cat{bBan}^M_\N$. This is the subcategory of $\cat{mBan}^M$ whose morphisms are essentially uniformly bounded. 
    Note that the kernels $(K_x,\iota_x)$ and cokernels $(C_x,\pi_x)$ constructed in Lemma \ref{Lem: ker and coker} naturally lie in $\cat{bBan}^M$ so that $\cat{bBan}^M$ automatically inherits the quasi-abelian structure of $\cat{mBan}^M$, obtaining:
    \begin{theorem}{\label{thm: bBan QA}}
        The category $\cat{bBan}^M_{\N}$ is quasi-abelian.
    \end{theorem}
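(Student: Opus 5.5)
The plan is to transfer the quasi-abelian structure of $\cat{mBan}^M$ (Theorem \ref{Thm: mBan QA}) to the subcategory $\cat{bBan}^M$. The strategy has four steps: show that $\cat{bBan}^M$ is additive, show that the kernels and cokernels of Lemma \ref{Lem: ker and coker} remain kernels and cokernels inside $\cat{bBan}^M$, show that pullbacks and pushouts formed in $\cat{mBan}^M$ stay in $\cat{bBan}^M$, and finally check the stability axioms.

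\textbf{Additivity.} The objects of $\cat{bBan}^M$ are those of $\cat{mBan}^M$, and its morphisms are the essentially uniformly bounded measurable families. Such families are closed under addition and scalar multiplication. They are also closed under composition, since $\|S_xT_x\|\le\|S_x\|\|T_x\|$. Identities and zero maps are bounded. The biproduct structure maps (inclusions and projections built from a bijection $\N\sqcup\N\cong\N$) have norm at most $1$ pointwise once we put the max-norm on $B_x\oplus C_x$. So $\cat{bBan}^M$ is an additive subcategory.

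\textbf{Kernels and cokernels.} Take a morphism $(T_x)$ in $\cat{bBan}^M$. Let $\iota:\ker T\to B$ and $\pi:C\to\coker T$ be the maps constructed in Lemma \ref{Lem: ker and coker}. Pointwise these are an isometric inclusion and a quotient map, so $\|\iota_x\|,\|\pi_x\|\le1$, and both lie in $\cat{bBan}^M$. For the universal property, suppose $(f_x):(D_x)\to(B_x)$ is bounded with $Tf=0$. The factorisation $g$ through $\ker T$ exists in $\cat{mBan}^M$ and is unique there. Since $\iota_x$ is isometric, $\|g_x\|=\|f_x\|$, so $g$ is bounded. Dually, if $h:C\to E$ kills $T$, the factorisation $k$ through $\coker T_x=C_x/\overline{\im T_x}$ satisfies $\|k_x\|=\|h_x\|$ by the definition of the quotient norm. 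Hence $\cat{bBan}^M$ has kernels and cokernels, and they agree with those of $\cat{mBan}^M$.

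\textbf{Pullbacks, pushouts, and the stability axioms.} A pullback of $E\to F\leftarrow G$ is the kernel of the difference map $E\oplus G\to F$, and a pushout is dually a cokernel of a map out of a biproduct. By the previous two steps, both constructions in $\cat{bBan}^M$ coincide with those in $\cat{mBan}^M$. Now take a kernel (equivalently, a strict monomorphism) in $\cat{bBan}^M$, say $\iota=\ker T$. It is also a kernel in $\mathcal{mBan}^M$, namely of the same $T$. Its pushout along a bounded map is computed identically in both categories, so Theorem \ref{Thm: mBan QA} says the pushout is a kernel in $\cat{mBan}^M$, i.e.\ pointwise a closed embedding. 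To see that it is a kernel in $\cat{bBan}^M$, it suffices to exhibit it as the kernel of its own cokernel, which is a bounded map; by the kernel step above, that kernel is the same in both categories. The cokernel/pullback case is dual.

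\textbf{Main obstacle.} The delicate point is the step just described: a strict morphism of $\cat{mBan}^M$ is a kernel of \emph{some} measurable family, which might be unbounded. My plan is to always test strictness against the bounded map $\coker(\iota)\to$ target rather than the original family. I would also confirm that passing to essential boundedness (modifying on null sets) does not disturb the pointwise identifications, since the constructions in Lemma \ref{Lem: ker and coker} are only determined up to null sets.
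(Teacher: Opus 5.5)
Your first three steps are correct and amount to a more careful version of the paper's proof. The paper's proof consists only of the remark that the kernels and cokernels of Lemma~\ref{Lem: ker and coker} have pointwise norm at most $1$, so that $\cat{bBan}^M$ ``inherits'' the quasi-abelian structure.

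\textbf{The gap.} The problem is in your stability step, at ``it suffices to exhibit it as the kernel of its own cokernel \dots\ that kernel is the same in both categories''. Knowing that the pushout $j\colon C\to P$ is a kernel in $\cat{mBan}^M$ only means the comparison map $\phi\colon C\to\ker(\coker j)$ is an isomorphism in $\cat{mBan}^M$. Pointwise, $\phi$ is $C_x\to\overline{j_x(C_x)}$ with the norm of $P_x$, and it is invertible with measurable inverse. For $j$ to be a kernel in $\cat{bBan}^M$, you need $\phi$ to be an isomorphism in $\cat{bBan}^M$, i.e.\ $\esssup_x\|\phi_x^{-1}\|<\infty$. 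This is the uniform bound of Lemma~\ref{lem: uniformly strict}, and a pointwise result such as Theorem~\ref{Thm: mBan QA} cannot supply it.

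The same inference would ``prove'' that the bounded family $(x\cdot\id_{\KK})_{x\in(0,1]}$ is a kernel in $\cat{bBan}^{(0,1]}$. It is an isomorphism in $\cat{mBan}^{(0,1]}$, its cokernel is $0$, and $\ker 0=\id$ in both categories. Yet it is not a kernel there: a strict monomorphism with zero cokernel is an isomorphism, and the inverse $x^{-1}$ is unbounded. The paper makes exactly this point in the $\cat{bVect}$ discussion.

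So your ``main obstacle'' is misidentified. The issue is not that $T$ might be unbounded. It is that strictness in $\cat{bBan}^M$ is a uniform condition while in $\cat{mBan}^M$ it is pointwise, and your argument never uses the uniform bounds carried by the original strict monomorphism.

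\textbf{The missing estimates.} Suppose $\iota\colon A\to B$ is a kernel in $\cat{bBan}^M$. Then $\|\iota_xa\|\geq\alpha\|a\|$ almost everywhere, with $\alpha>0$ independent of $x$. Suppose also $\|f_x\|\leq K$ almost everywhere. In $P_x=(B_x\oplus C_x)/\overline{\{(\iota_xa,-f_xa)\}}$ with the sum norm, for every $a\in A_x$,
\[\|c\|\leq\|c+f_xa\|+K\|a\|\leq\max(1,K/\alpha)\bigl(\|\iota_xa\|+\|c+f_xa\|\bigr).\]
Hence $\|j_xc\|\geq\|c\|/\max(1,K/\alpha)$ uniformly in $x$. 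Combined with the measurability of $\phi^{-1}$, which does come from $\cat{mBan}^M$, this makes $\phi$ an isomorphism in $\cat{bBan}^M$.

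Dually, let $\pi\colon B\to C$ be a strict epimorphism of $\cat{bBan}^M$. Every $c\in C_x$ has a preimage $b$ with $\|b\|\leq\gamma\|c\|$, where $\gamma$ is independent of $x$. For $\|g_x\|\leq K$, every $d\in D_x$ then lifts to $(b,d)$ in the pullback with $\|(b,d)\|\leq(1+\gamma K)\|d\|$. This gives uniform strictness of the pulled-back epimorphism.

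With these two estimates your argument goes through. The paper's one-line proof leaves this point implicit as well.
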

    \item[Hilbert spaces and finite dimensional vector spaces:] We are also interested in $\cat{mHilb}^M$ (resp. $\cat{bHilb}^M$) - the full subcategories whose objects $(B_x,\{b_i(x)\})$ have norms $\|\cdot\|_x$ which arise from inner-product structures $\langle\cdot,\cdot\rangle_x$; and $\cat{mVect}^M$ (resp. $\cat{bVect}^M$, - the full subcategory whose objects $(B_x,\{b_i(x)\})_{x\in M}$ are all finite dimensional vector spaces. 

All of these categories are closed under finite direct sums and kernels and cokernels, and strict morphisms remain strict in the subcategory, so they are all quasi-abelian. In fact, $\cat{mVect}^M$ is even abelian, since all morphisms in it are strict, although counter-intuitively $\cat{bVect}^M$ is again only quasi-abelian; the morphism 
$$(x\mathds{1}_\C):(\C,\{1\in \C\})_{x\in (0,1]}\to (\C,\{1\in \C\})_{x\in (0,1]} $$ 
in $\cat{bVect}^{(0,1]}$ is not strict, else it would be an isomorphism. 

In both of these categories, life is somewhat easier than in the Banach space case. The norm $\|\cdot\|_x$ on $H_x$ for an abstract Banach bundle $(H_x)\in\cat{mHilb}^M$ is determined completely by Pythagoras' Theorem, and is therefore constant as $x$ varies. 

Similarly, because all norms on $\KK^n$ are equivalent to the $\ell_p$-norm, it turns out that $\cat{mVect}^M$ is equivalent to its full subcategory consisting of the abstract Banach bundles $(B_x)$ whose norms $\|\cdot\|_{x}$ are equal to the $\ell_p$-norm.  

\item[Representations] Let $G$ be a group. A natural category to study is 
\[\cat{mRep}^M(G):=\cat{Fun}(*/G,\cat{mBan}^M).\]

If $G$ has additional structure, such as being a topological group, then one should take the full subcategory of abstract Banach bundles such that each composition $\ev_x\circ R:*/G\to \cat{Ban}$ is a continuous Banach representation. 

If $G$ is the real/complex points of a reductive algebraic group $G=\Gc(\KK)$, and $K$ is a choice of maximal compact subgroup, then one is often interested in decomposing a representation $(\pi,V)$ of $G$ into its $K$-types. In this situation, it is really rather natural to look at abstract Banach bundles whose underlying M-basis gives the set of $K$-finite vectors. One can then define a category 
\[\cat{mRep}^M(G,K):=\left\{\begin{array}{c} (\pi_x,B_x,\{b_i(x)\}_{i\in \omega})\in \cat{mRep}^M(G) \\ : \Span \{b_i(x)\}=\{v\in B_x: v\text{ is $K$-finite}\} \\ \text{ and }\pi_x:G\to \Aut(B_x) \text{ is continuous}\end{array}\right\}.\]

\end{description}

\section{Functorial Direct Integrals}

In this section, we introduce the measurable sections functor as a functor from $\cat{mBan}^M$ to vector spaces. We then introduce the direct integral as a functorial notion. We also prove that both these functors satisfy precise exactness properties, and reflect exactness.

\subsection{The measurable sections functor}

Let $(A_x),(B_x)$ be two abstract Banach bundles in $\cat{mBan}^M_w$. Inside the Hom-set, one has the subspace of almost everywhere zero families:
\[Z_{\mu}\big((A_x),(B_x)\big)=\Big\{(T_x):\mu\big(\{x\in M: T_x\neq 0\}\big)=0\Big\}\subseteq \Hom\big((A_x),(B_x)\big) \]
which satisfies nice properties such as
\[Z_{\mu}\big((A_x),(B_x)\big)\circ Z_{\mu}\big((B_x),(C_x)\big)\subseteq Z_{\mu}\big((A_x),(C_x)\big)\]
which allows us to form the quotient category $\widetilde{\cat{mBan}}^{(M,\mu)}_w$ whose objects are those of $\cat{mBan}^M_w$ and whose hom sets are 
\[\Hom_{\widetilde{\cat{mBan}}^{(M,\mu)}_w} \big((A_x),(B_x)\big):=\Hom_{\cat{mBan}^{M}_w}\big((A_x),(B_x)\big)/Z_{\mu}\big((A_x),(B_x)\big)\]
This category is also quasi-abelian when $w=\N$. 

For clarity, given two abstract Banach bundles $(A_x),(B_x)\in \cat{mBan}^M_w$, we will use $\Hom\big((A_x),(B_x)\big)$ to mean $\Hom_{\cat{mBan}^{M}_w} \big((A_x),(B_x)\big)$ and 
$\widetilde{\Hom}\big((A_x),(B_x)\big)$ to mean $\Hom_{\widetilde{\cat{mBan}}^{(M,\mu)}_w} \big((A_x),(B_x)\big)$.

\begin{definition}{\label{Def: measurable sections}}
    Let $\big\{(V_x,\{b_i(x)\}_{i\in\omega}\big\}\in \cat{mBan}^M_{w}$. We call a family of vectors $\{v_m\}$ a \emph{(weakly) measurable section} if, for each $i\in \omega$, the function $M\rightarrow k: m\mapsto b_i^*(m)(v_m)$ is measurable.
    
    For an abstract Banach bundle $(V_x,\{b_i(x)\})$, we define the \emph{measurable sections functor} as the functor from $\cat{mBan}^M_w$, factoring through the quotient category $\widetilde{\cat{mBan}}^{(M,\mu)}_w$, to the category of vector spaces $\cat{Vect}$ defined on objects by
    \[\int^mV_mdm=\{\{v_m\}: \textrm{the family is measurable}\}/\thicksim\]
    where we consider two families $\{v_m\}\sim \{v'_m\}$ to be equivalent whenever $v_m=v'_m$ for almost all $m\in M$.
    and which sends measurable morphisms $(T_x)$ to the underlying linear map $(v_x)\mapsto (T_xv_x)$. 
\end{definition}

\begin{remark}
    Please note that weak measurability is a priori different from what we will call Bochner measurable sections; that is, sections that are pointwise limits of families $(v_x^n)_{x\in M}$ that are step functions, in the sense that $M$ admits a countable decomposition $\{M_k\}_{k\in \N}$ such that each $(v^n_x)$ is constant on each $M_k$. 

    Bochner measurability implies weak measurability and, because the Banach spaces $(B_x)$ are all separable, it turns out that the inverse also holds true. 
    This is most easily seen in the case where each $B_x=B$ a constant abstract Banach bundle and $\KK$ is Archimedean, when it is a clear consequence of Pettis' measurability theorem. We postpone the more general proof to Appendix \ref{App: A}, so as not to interrupt the flow.
\end{remark}

It remains to prove (see Lemma \ref{Lem: morphisms and Meas. families}) that measurable sections are indeed sent to measurable sections.

\begin{lemma}{\label{Lem: morphisms and Meas. families}}
    Let $(T_x):(B_x)\to (C_x)$ be a morphism in $\cat{mBan}^M_\omega$. If $(v_x)\in(B_x)$ is a measurable family, then $(T_x(v_x))\in(C_x)$ is a measurable family.
\end{lemma}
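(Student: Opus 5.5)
The plan is to treat families of vectors as morphisms out of the constant bundle $(\KK,\{1\})_{x\in M}$, as in the Terminology paragraph, and reduce everything to Lemma \ref{lem: composition}. A measurable family $(v_x)\in(B_x)$ is, by the Terminology convention, a Bochner measurable morphism $(k)\to(B_x)$: a pointwise a.e.\ limit of step families $v^n_x$. The family $(T_xv_x)$ is then the composite $(k)\to(B_x)\to(C_x)$. If we are working with weakly measurable sections in the sense of Definition \ref{Def: measurable sections}, we first pass to Bochner measurable sections using the separability result postponed to Appendix \ref{App: A}. Weak measurability of the result then follows from Lemma \ref{matrix coefficients are measurable maps} applied to the composite.

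The composition argument needs care, because the Terminology explicitly drops the \emph{basic} requirement for families of vectors, while Lemma \ref{lem: composition} was phrased for step basic families. So I will redo the three-stage argument of that lemma directly.

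First I handle a constant family of vectors $v$ composed with a constant basic $T$. Choose the approximating step families of $(v_x)$ to take values in the algebraic span $S_\KK$, using Lemma \ref{Lem: Rapid convergence} (values in $B^n_x$). Then $v=\sum_i a_ib_i(x)$ is a finite sum with constant coefficients. Hence $T_xv=\sum_i a_iT_xb_i(x)$ lies in the span of the $c_j(x)$, and its coordinates $\sum_i a_iT_{ij}$ are constant. So $(T_xv)$ is a constant family.

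Next, for step families, I refine the two countable measurable partitions to their common refinement, exactly as in Lemma \ref{lem: composition}; this gives a step family. Finally, for the limit, let $T^n\to T$ a.e.\ in the $x$-operator norm and $v^n\to v$ a.e.\ in $\|\cdot\|_x$. Then
\[\|T^n_xv^n_x-T_xv_x\|_x\le \|T^n_x\|_x\|v^n_x-v_x\|_x+\|T^n_x-T_x\|_x\|v_x\|_x .\]
Since $\|T^n_x\|_x$ is eventually bounded by $\|T_x\|_x+1$ at each good $x$, the right-hand side tends to $0$ off a null set. The union of the two exceptional null sets is null, so $(T^n_xv^n_x)$ is a sequence of step families converging a.e.\ to $(T_xv_x)$.

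The main obstacle I expect is the first stage. A general step approximant $v^n$ of $v$ need not take values in the finite span, and then $T_xv$ for a constant basic $T$ has coordinates $\sum_i c_j^*(x)T_x b_i(x)\,b_i^*(x)(v)$, which need not be constant or even countably valued. I would resolve this by invoking Lemma \ref{Lem: Rapid convergence} to choose approximants valued in $S_Q$. If that lemma's step functions are not literally constant-coefficient on pieces, I will rebuild them from its proof: there $f_n$ takes values in the countable set $S_Q\cap B^n_x$, viewed as coefficient vectors. Its level sets are measurable because the distance functions are measurable, so $f_n$ is a genuine step family with constant finite coefficients on each piece.
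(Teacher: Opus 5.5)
For $\omega=\N$, and granting Theorem \ref{Thm: Separable measurablity}, your argument is sound. It is essentially the ``much more succinct proof'' the paper records in the remark right after this lemma: upgrade $v$ to a Bochner limit, view it as a morphism out of the constant bundle, and compose as in Lemma \ref{lem: composition}. Your insistence on finite-span approximants of $v$ is well placed. The proof of Lemma \ref{lem: composition} asserts that $d_j^*(x)S_x$ is a finite combination of the $c_s^*(x)$, and this does not follow from $S$ being basic: on $\ell^2$, the operator $e_i\mapsto 2^{-i}e_1$ is basic but $e_1^*S=\sum_i 2^{-i}e_i^*$. By contrast, your identity $c_j^*(x)T_x\big(\sum_i a_ib_i(x)\big)=\sum_i a_iT_{ij}$ really is a finite sum of constants.

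The gap is one of generality. The lemma is stated in $\cat{mBan}^M_\omega$ for arbitrary $\omega$, but your first step (finite-span step approximants of $v$) rests on Theorem \ref{Thm: Separable measurablity} or Lemma \ref{Lem: Rapid convergence}. Both are stated only for $\omega=\N$, and the latter needs $\bigcup_{n\in\N}B^n_x$ to be dense. For uncountable $\omega$ the upgrade genuinely fails. Take the constant bundle $\ell^2(\omega)$ over $[0,1]$ with its unit vector basis, where $|\omega|\geq 2^{\aleph_0}$, and set $v_x=e_{\iota(x)}$ for an injection $\iota:[0,1]\to\omega$. Every coordinate $x\mapsto e_\alpha^*(v_x)$ is the indicator function of a set with at most one point, so $v$ is a measurable section. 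Yet on any conull set its range is an uncountable $\sqrt{2}$-separated set, so $v$ is not an a.e.\ limit of countably-valued step families.

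What is missing for general $\omega$ is an argument that uses only the measurability of $x\mapsto b_i^*(x)(v_x)$, working on the functional side rather than the vector side; this is the paper's main proof. It approximates $T$ by step basic families $S^n$ and restricts to a piece where $S$ is constant basic. There $\psi_x=c_j^*(x)S_x$ has $x$-independent values on the $b_i(x)$. When $\psi_x=\sum_i\alpha_ib_i^*(x)$ is a finite combination, $\psi_x(v_x)$ is measurable directly from the hypothesis on $v$; in general the paper passes to a weak$^*$ limit of such combinations. That limit is only sketched in the paper: weak$^*$ density at each fixed $x$ does not by itself give one sequence of $x$-independent combinations converging at $v_x$ for almost every $x$, so it needs care too. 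As written, your proof establishes the lemma for $\omega=\N$ only. You should either say so explicitly or supply such a functional-side argument.
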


\begin{proof}
    It suffices to show that for every $j\in\omega$ the function $c_j^*(x)\big(T_x(v_x)\big)$ is measurable as a function of $x$. As $T_x$ is the limit of families of step functions, it follows that $c_j^*(x)\big(T_x(v_x)\big)$ is measurable if $c_j^*(x)\big(S_x(v_x)\big)$ is measurable for any step family $S_x$. Furthermore, by restricting to a smaller subset $M'\subseteq M$ if necessary, it is enough to prove this for constant families $(S_x)$. 

    In this situation, the functional $c^*_j(x)S_x$ is constant, in the sense that for each $i\in \omega$, the quantity $c^*_j(x)S_xb_i(x)$ is independent of $x$. 

    When $c^*_j(x)S_x$ is a finite linear combination of the $b_i^*(x)$, it is clear that this is measurable. Otherwise, it is in the weak${}^*$-closure of a set of functionals that are, so it follows that $c^*_j(x)S_xb_i(x)$ is the limit of measurable functions, and is thus measurable. 
\end{proof}

\begin{remark}
        In the case $\omega=\N$, there is a much more succinct proof as follows. 
        Let $\mathds{1}$ be the abstract Banach bundle $(k,\{b_i(x)\})$ in $\cat{mBan}^M_\N$ where $b_i(x)=1\in \KK$ if $i=0$  and $b_i(x)=0$ otherwise. 
        Let $Z_\mu((B_x),(C_x))\subset\Hom((B_x),(C_x))$ be the subspace of homomorphisms that are almost everywhere trivial. 
        By Appendix \ref{App: A}, measurable families of vectors $(v_m)\in (V_m)$ are in a one-to-one correspondence with the elements of $\Hom(\mathds{1},(V_m))/Z(\mathds{1},(V_m))$, so that the Lemma follows from Lemma \ref{lem: composition}.
\end{remark}

One can think of the measurable sections functor $\int^m:\cat{mBan}^M_w\to \widetilde{\cat{mBan}}^{(M,\mu)}_w\to \cat{Vect}$ as a process with two steps: first quotient the morphisms of the category by the almost trivial morphisms, then compose with $\widetilde{\Hom}(\mathds{1},\_)$. 

\begin{example}{\label{Ex: measurable sections sim product}}
    Let $(B_x)_{x\in M}$ be a abstract Banach bundle, indexed by a measure space $M$ with the counting measure, and suppose $|\omega|\geq\big(\sup_x\dim(B_x)\big)\times{|M|}$. Suppose that the functions $b_i(x)$ are such that $b_i(x)=0$ for all but a single $x$ (which is possible, because of our assumption on the cardinality). Then 
    \[\int^mB_x=\prod_{x\in M} B_x\]
\end{example}
\begin{example}
    If $B_x=\C$ for all $x$, and $b_0(x)=1$ for all $x\in M$, then 
    \[\int^mB_x=\{\textrm{measurable functions $f:M\to \C$}\}/\thicksim\]
\end{example}

We recall the differing notions of exactness in quasi-abelian categories (see \cite{Schneiders1999} \S1.1). 

\begin{definition}
    A null sequence 
    \[A\xrightarrow{f} B\xrightarrow{g} C\]
    is strictly (co)exact if the natural map $\im(f)\to\ker(g)$ is an isomorphism and if $f$ (resp. $g$) is strict.
\end{definition}

\begin{definition}
    \begin{enumerate}
        \item A functor $F:\cat{C}\to \cat{D}$ is exact if it sends strictly short exact sequences 
        \[0\to A\to B\to C\to 0\] 
        to strictly short exact sequences. 
        \item A functor $F:\cat{C}\to \cat{D}$ is strictly (co)exact if it sends strictly (co)exact sequences 
        \[A\to B\to C\] 
        to strictly short (co)exact sequences.
        \item A functor $F:\cat{C}\to \cat{D}$ is strongly exact if it is both strictly exact and strictly coexact.
    \end{enumerate}
\end{definition}

\begin{example}
    The forgetful functor $\cat{Ban}\to \cat{Vect}$ is strictly exact, but not strictly coexact because one requires strictness of $f:A\to B$ to guarantee that $A/\ker(f)\cong \im(f)\cong \ker g $.  
    On the other hand, the forgetful functor $\cat{Ban}\to \cat{Norm}$ is strongly exact, because the notions of image and coimage are not changed by this functor (more precisely, they are always Banach spaces, whether regarded as Banach or normed spaces). 
\end{example}

\begin{prop}{\label{Prop: Measurable exactness}}
The association $\cat{mBan}^M_\N\to \cat{Vect}:\{V_m\}\mapsto \int^mV_m $ is a strictly exact functor, which we call the \emph{measurable sections functor}.
That is, $\int^m$ transforms exact sequences
\[(A_x)\xrightarrow{f}(B_x)\to (C_x)\] 
with $f$ strict to exact sequences. 
The functors $\ev_m:\cat{mBan}^M_\N\rightarrow \cat{Ban}$ are all strongly exact.

\end{prop}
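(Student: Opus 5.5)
The plan is to reduce everything to the fact, stressed after Lemma \ref{Lem: ker and coker}, that kernels, cokernels, images, coimages and strictness in $\cat{mBan}^M_\N$ are all computed pointwise. Under this principle the claim about $\ev_m$ is almost tautological, and the claim about $\int^m$ reduces to a measurable selection problem.

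\textbf{Step 1: the functors $\ev_m$.} Take a strictly exact sequence $(A_x)\xrightarrow{f}(B_x)\xrightarrow{g}(C_x)$ in $\cat{mBan}^M_\N$. By the characterisation of $\im$, $\coim$ and strictness given after Theorem \ref{Thm: mBan QA}, the following hold:
\[\im(f)_x=\overline{f_x(A_x)},\qquad \ker(g)_x=\ker g_x,\qquad \coim(f)_x=A_x/\ker f_x.\]
So the conditions ``$\im f\to\ker g$ is an isomorphism'' and ``$f$ is strict'' imply the same statements for $f_m$ and $g_m$ in $\cat{Ban}$ at the point $m$. The same argument handles coexactness and strictness of $g$. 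Hence each $\ev_m$ is strongly exact.

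One caveat: if strictness or exactness in $\cat{mBan}^M$ only yields the pointwise statements almost everywhere, I will either pass through the quotient category $\widetilde{\cat{mBan}}^{(M,\mu)}$, or note that the kernels and cokernels of Lemma \ref{Lem: ker and coker} are literally pointwise (the modifications on the null set $Z$ only change the chosen M-basis, not the spaces).

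\textbf{Step 2: $\int^m$ is strictly exact.} Let $f$ be strict and $\im f\cong\ker g$. Then for (almost) every $x$ the map $f_x$ has closed range, and $f_x(A_x)=\ker g_x$.

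Exactness at $\int^m B_x$ has two inclusions. The inclusion $\operatorname{im}\subseteq\ker$ is clear, since $g\circ f=0$. For the reverse, take a measurable section $(v_x)$ with $g_xv_x=0$ almost everywhere. Then $v_x\in\ker g_x=f_x(A_x)$ for almost all $x$, and the task is to find a \emph{measurable} section $(a_x)$ of $(A_x)$ with $f_xa_x=v_x$.

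To build $(a_x)$, factor $f$ as
\[(A_x)\twoheadrightarrow\coim(f)\isomto\im(f)\hookrightarrow(B_x).\]
\begin{itemize}
\item Since $v_x\in\ker g_x$, the section $(v_x)$ lies in $\im(f)$. The inclusion of $\im f$ is a morphism with M-basis built in Lemma \ref{Lem: ker and coker}, and measurability of $(v_x)$ as a section of $\im(f)$ follows by testing against the coordinate functionals of that M-basis, using Appendix \ref{App: A} to pass between weak and Bochner measurability.
\item Since $f$ is strict, $\coim f\to\im f$ is an isomorphism in $\cat{mBan}^M$. So we can apply its inverse, using Lemma \ref{Lem: morphisms and Meas. families}, to get a measurable section of $\coim f=(A_x/\ker f_x)$.
\item It remains to lift a measurable section of the quotient $(A_x/\ker f_x)$ to a measurable section of $(A_x)$.
\end{itemize}

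\textbf{Step 3: the lifting, which I expect to be the main obstacle.} Here I would use Lemma \ref{Lem: Rapid convergence} to write the quotient section as a limit of step sections with values in spans of the M-basis images $\pi_x(a_i(x))$. I then lift each step term by the same finite combination of the $a_i(x)$, making corrections so the lifts form a Cauchy sequence.

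Concretely, I choose the successive differences of step sections to have quotient norm below $2^{-n}$. Each difference is lifted with norm below $2\cdot2^{-n}$, by choosing among countably many $Q$-combinations the first one that works, as in the proof of Lemma \ref{Lem: Rapid convergence}; measurability of $\operatorname{Dist}$ ensures this choice is measurable. The sum of these lifts converges pointwise to a Bochner measurable lift.

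Since the whole argument is a quotient-lifting argument and does not use that $g$ is strict, strict coexactness should not be expected. This matches the example of the forgetful functor $\cat{Ban}\to\cat{Vect}$.
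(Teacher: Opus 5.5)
Your proposal is correct. It shares the paper's skeleton: kernels, cokernels and strictness are computed pointwise, which gives strong exactness of $\ev_m$ at once, and then one exploits $\coim f\cong\im f\cong\ker g$. For $\int^m$, however, you argue differently and more completely.

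The paper only observes that $A_x/\ker f_x\to\ker g_x$ is an isomorphism for every $x$. It upgrades this to an isomorphism in $\cat{mBan}^M_\N$ via Proposition \ref{Prop: pointwise iso implies iso}, and then concludes exactness of $\int^mA_x\to\int^mB_x\to\int^mC_x$ in one line. That conclusion tacitly needs two facts:
\begin{enumerate}
\item $\int^m$ preserves kernels, i.e.\ a measurable section killed by $g$ is a measurable section of $\ker g$;
\item $\int^mA_x\to\int^m(A_x/\ker f_x)$ is surjective, i.e.\ measurable sections of a quotient bundle admit measurable lifts.
\end{enumerate}
The second is exactly projectivity of $\mathds{1}$ against strict epimorphisms, which the paper only addresses later and briefly. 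Your Steps 2 and 3 supply both facts, and your Cauchy-series lift with measurable first-choice selections is the right argument for the second. You also take the inverse of $\coim f\to\im f$ directly from the hypothesis that $f$ is strict in $\cat{mBan}^M_\N$. So you never need Proposition \ref{Prop: pointwise iso implies iso}, whose Schauder-basis hypothesis on the target is not checked in the paper's proof.

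Two points of execution need tightening.
\begin{enumerate}
\item The M-basis of $\coim f$ produced by Lemma \ref{Lem: ker and coker} is not literally $\{\pi_x(a_i(x))\}$. It consists of images of measurable sections of $(A_x)$, which is all your lifting of step terms actually uses.
\item Lemma \ref{Lem: Rapid convergence} gives only pointwise convergence, so you cannot extract a single subsequence whose successive differences are below $2^{-n}$ for all $x$. Instead, for each $n$, pick the first $Q$-combination of quotient basis sections lying within $2^{-n}$ of $u_x$; this is a measurable choice, since norms of measurable sections are measurable. Then correct each lifted difference by the first $Q$-combination of the kernel's basis sections that brings its norm below twice its quotient norm plus $2^{-n}$. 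Those kernel sections are measurable sections of $(A_x)$ spanning $\ker f_x$ densely at every $x$.
\end{enumerate}
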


\begin{proof}
    The strong exactness of $\ev_m$ follows directly from the pointwise construction of the kernel and cokernel of Lemma \ref{Lem: ker and coker}, and the fact that strictness is a pointwise condition. 

    To show that $\int^m$ is strictly exact, consider the exact sequence 
    \[(A_x)\xrightarrow{f}(B_x)\xrightarrow{g}(C_x)\]
    with $f$ strict. 
    Then, pointwise, the morphisms $A_x/\ker(f_x)\to \ker(g_x)$ are always isomorphisms, so in particular they isomorphisms of $\cat{mBan}^M$ by Proposition \ref{Prop: pointwise iso implies iso}, making the sequence
    \[\int^mA_x\xrightarrow{\int^mf}\int^mB_x\xrightarrow{\int^mg}\int^mC_x\]
    strictly exact (strictness arising because all morphisms in $\cat{Vect}$ are strict).   
\end{proof}

\begin{prop}{\label{Prop: Reflecting Exactness}}
    The functors $\ev_m$ and $\int^{m}$ `reflect' exactness in the following precise way. Let 
    \begin{equation}{\label{Eq: Sequence}}
        (A_x)\xrightarrow{f_x} (B_x)\xrightarrow{g_x} (C_x)
    \end{equation}
    be a sequence with $g_xf_x=0$. Then:
    \begin{itemize}
        \item If all sequences \footnote{i.e. - applying the functors $\{\ev_x\}_{x\in M}$}
        \[A_x\xrightarrow{f_x} B_x\xrightarrow{g_x} C_x\]
        are strictly (co)exact then so is Sequence (\ref{Eq: Sequence}).
        \item If the sequence 
     \[\int^mA_x\xrightarrow{\int f_x} \int^mB_x\xrightarrow{\int g_x} \int^mC_x\]
     is exact, then there is a measure zero set $Z\subseteq M$ for which Sequence (\ref{Eq: Sequence}) is strictly exact in $\cat{mBan}^{M\setminus Z}_\N$.
    \end{itemize}
\end{prop}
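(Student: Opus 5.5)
The plan is to exploit the fact, stressed after Lemma \ref{Lem: ker and coker}, that kernels, cokernels, images, coimages and strictness in $\cat{mBan}^M_\N$ are all computed pointwise.

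\textbf{First bullet.} I would argue formally from this pointwise computation. The canonical morphism $\im(f_x)\to\ker(g_x)$ in $\cat{mBan}^M_\N$ is, at each $x$, the canonical map $\im f_x\to\ker g_x$ of Banach spaces. By hypothesis that map is an isomorphism for every $x$, so Proposition \ref{Prop: pointwise iso implies iso} upgrades it to an isomorphism in $\cat{mBan}^M_\N$. The required strictness of $f$ (resp.\ $g$) follows the same way: the map $\coim\to\im$ is an isomorphism pointwise, hence globally, as in the characterisation of strictness given after Theorem \ref{Thm: mBan QA}. One caveat is that Proposition \ref{Prop: pointwise iso implies iso} carries a Schauder-basis hypothesis on the target. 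I would either note that the M-bases chosen in Lemma \ref{Lem: ker and coker} can be taken to satisfy it, or invoke the strictness characterisation already stated in the paper, which uses the same proposition.

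\textbf{Second bullet.} The strategy is to show that pointwise strict exactness holds off a null set $Z$, and then apply the first bullet over $M\setminus Z$.

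Suppose the sequence of measurable sections is exact. Consider the kernel object $(K_x)=(\ker g_x)$, which comes with its M-basis $\{\tilde\kappa_i(x)\}$ from Lemma \ref{Lem: ker and coker}. Each $\tilde\kappa_i$ is a measurable section of $(B_x)$ killed by $g$. By exactness there is a measurable section $a_i\in\int^mA_x$ with $f_x a_i(x)=\tilde\kappa_i(x)$ for $x$ outside a null set $Z_i$. Set $Z=\bigcup_i Z_i$, which is still null because the union is countable. Then for $x\notin Z$ the image $f_x(A_x)$ contains every $\tilde\kappa_i(x)$. This gives density of $\im f_x$ in $\ker g_x$, so $\overline{\im f_x}=\ker g_x$.

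\textbf{Main obstacle: strictness of $f_x$.} Density is not enough; we also need $f_x(A_x)$ to be closed for almost all $x$, i.e.\ $f_x$ strict. I would attempt this by contradiction. Suppose strictness fails on a set $E$ of positive measure. On $E$, choose measurably a sequence in $\ker g_x$ whose norms are approximately prescribed but whose preimage distances grow. This uses the countable dense set $S_Q$ together with the measurable distance functions from the elementary measurability lemmas. Sum these into a single measurable section $v\in\ker(\int^m g)$ that has no measurable preimage, because any pointwise preimage would need norm exceeding every measurable bound. Since $\int^m$ imposes no integrability condition, I would use a Baire-category type argument: pick $x$-dependent vectors $w_x\in\ker g_x\setminus f_x(A_x)$ measurably, via a measurable selection argument. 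This contradicts exactness of the measurable-sections sequence. The measurable choice of $w_x$ outside the algebraic image is the delicate point. I would handle it using the fact that $f_x(A_x)$ is a Borel (analytic) subset of the separable space $\ker g_x$, transported to a fixed Polish model via the M-bases, so that a measurable selection theorem (Jankov--von Neumann) applies.
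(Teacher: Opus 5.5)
Your first bullet is the paper's argument: kernels, images and strictness are computed fibrewise. Of your two ways around the Schauder hypothesis in Proposition~\ref{Prop: pointwise iso implies iso}, only the second is available. A kernel need not have a Schauder basis at all, as the paper itself notes. So you are relying on the paper's pointwise characterisation of strictness, exactly as the paper does.

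For the second bullet your route is genuinely different from the paper's. The paper argues categorically. It quotes Schneiders' Proposition 2.1.8, taking $\mathds{1}$ as a tiny projective strict generator of $\widetilde{\cat{mBan}}^{(M,\mu)}_\N$, to get strict exactness in the quotient category. It then reads off $Z$ from the fact that the canonical map $\im f\to\ker g$ and its inverse compose to identities away from a null set. You work fibrewise instead. You get density of $f_x(A_x)$ in $\ker g_x$ almost everywhere from the countably many basis sections $\tilde\kappa_i$. You get closedness almost everywhere by contradiction: if it failed on a set of positive measure, you exhibit a measurable section of $\ker g$ lying outside the range there.

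The paper's version is shorter, but all the analytic content is hidden in one assertion. That content is that surjectivity on measurable sections forces $f_x(A_x)=\ker g_x$ almost everywhere. The assertion hiding it is that $\mathds{1}$ is projective and strictly generating in the quotient category, and this is not checked there. The earlier generation argument used $\mathds{1}_{\{x\}}$, which becomes a zero object once $\mu(\{x\})=0$. Projectivity of $\mathds{1}$ is itself a measurable lifting statement. Your version is self-contained and puts the difficulty exactly where it lies.

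What you must tighten is the selection step. Discard the first idea about preimages whose norms exceed every measurable bound. $\int^m$ imposes no size condition, and at a fixed $x$ any preimage has finite norm, so you need a vector outside $f_x(A_x)$ altogether.

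Jankov--von Neumann uniformizes analytic sets, whereas $\{(x,w): w\in\ker g_x\setminus f_x(A_x)\}$ is a priori only co-analytic. Lusin--Souslin gives Borelness of $f_x(A_x)$ one fibre at a time. Joint Borelness would need a standard Borel model of the bundle plus a global injectivity argument, and you have supplied neither.

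It is easier to run the Baire category argument measurably. Let $E=\{x: f_x(A_x)\neq\ker g_x\}$.
\begin{enumerate}[(i)]
\item For $x\in E$, each $\overline{f_x(k\,\mathrm{Ball}_{A_x})}$ is nowhere dense in $\ker g_x$ by the open mapping lemma, and $f_x(A_x)$ lies in their union over $k$.
\item For $y$ a $Q$-rational combination of the $\tilde\kappa_i(x)$, the function $x\mapsto\operatorname{dist}\bigl(y,f_x(k\,\mathrm{Ball}_{A_x})\bigr)$ is measurable. It is a countable infimum over the elements of $S_Q$ for $(A_x)$ of norm less than $k$.
\item Choosing nested balls with least indices therefore gives a measurable section $w$ with $w_x\notin f_x(A_x)$ for $x\in E$. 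The same distance functions show $E$ is measurable.
\item Set $w_x=0$ off $E$. Exactness of the $\int^m$-sequence then forces $\mu(E)=0$.
\end{enumerate}
Your reduction to the first bullet on $M\setminus Z$ then completes the proof.
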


\begin{proof}
    The first part follows directly from the pointwise description of strict morphisms and kernels/cokernels. 

    By Proposition 2.1.8 (also 1.3.23) of \cite{Schneiders1999}, 
    because $\mathds{1}$ is a tiny projective object that generates $\widetilde{\cat{mBan}}^{(M,\mu)}_\N$ in the sense of Defintion 2.1.5 of \cite{Schneiders1999}, the functor $\widetilde{\Hom}(\mathds{1},\_)$ reflects exact sequences so that 
    $$(A_x)\to (B_x)\to (C_x)$$
    is strictly exact in $\widetilde{\cat{mBan}}^{(M,\mu)}_\N$. 

    Therefore, the natural map $\iota_x:\im f_x\to \ker g_x$ is an isomorphism in $\widetilde{\cat{mBan}}^{(M,\mu)}_\N$ with inverse $p_x$. The set $Z=\{x\in M:p_x\circ \iota_x\neq \id\}\cup \{x\in M:\iota_x\circ p_x\neq \id\}$ is therefore measure zero, and so we obtain that $A_x\to B_x\to C_x$ is exact for all $x\in M\setminus Z$.
    To check that $f_x$ is almost everywhere strict is similar, obtaining the result after the application of Proposition \ref{Prop: Measurable exactness}.    
\end{proof}

\subsection{Integrability}\label{the functorial direct integral}

We now turn our attention to direct integrals. As with the Hilbert direct integral, we expect a few different properties:
\begin{itemize}
    \item  $\int_M^\oplus B_x dx$ should be a complete Banach space. 
    \item The integral needs to be functorial in $(B_x)$.
    \item For any open neighbourhood $U$ of $x$, 
    $\int_U^\oplus B_x$ should approximate $B_x$, and the approximation should improve as $\mu(U)\to 0$. 
    \item Our notion should generalise the Von-Neumann's construction in the Hilbert space case.
\end{itemize}

For the property of functoriality, the category $\cat{mBan}^M_w$ is not quite good enough, as the following example demonstrates.

    \begin{remark}
        Let $M=(0,1)$, and let $V_m=\C$ for each $m\in M$. The endomorphism $(\phi_m)$ of $(V_m)\in\cat{mBan}^M_w$ given by $\phi_m(v)=m^{-691}v$ is a measurable morphism (indeed, it is even continuous), but it sends the family $(1)_{m\in M}$ with $L^2$-norm $||1||_{p}=1$ to a family $(m^{-691})_{m\in M}$ with infinite $L^2$-norm.
    \end{remark}
    To rectify this example, we need to restrict the types of morphisms to those that are uniformly bounded. 

\begin{definition}
      We say a morphism $(T_m)_{m\in M}$ is \emph{uniformly bounded} if 
    
    \[\esssup_{m\in M}\{||T_m||\}<\infty\]
    where the norm $\|T_m\|$ is the operator norm.

    We set the category $\cat{bBan}^M_w\subseteq\cat{mBan}^M_w$ (resp. $\cat{bHilb}^M_w\subseteq\cat{mHilb}^M_w$) as the category whose objects are simply all abstract Banach bundles, and whose morphisms are those morphisms $(T_m)$ whose norms are uniformly bounded. When $w=\N$, just like $\cat{mBan}^M_\N$, these are all an elementary quasi-abelian categories. Furthermore, the $\Hom$-spaces all exhibit a natural seminorm, given by the essential supremum above, and are complete with respect to this seminorm. 
    Of course, like with the space $L^\infty$, the seminorm of a morphism $(T_x)$ is zero if and only if $T_x=0$ for almost all $x\in M$. 
\end{definition}

\begin{lemma}{\label{lem: uniformly strict}}
    In the category $\cat{bBan}^M_\N$, a morphism $(T_x):(A_x)\to (B_x)$ is strict if and only if there are constants $\alpha,\beta\in (0,\infty)$ such that the induced norms on the pointwise coimage and images $\|\cdot\|_{\coim T_x}$ and $\|\cdot\|_{\im T_x}$ are \emph{uniformly} bounded
    \[0<\alpha\leq \frac{\|\cdot\|_{\im T_x}}{\|\cdot\|_{\coim T_x}}\leq \beta\]    
\end{lemma}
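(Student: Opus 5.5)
The plan is to reduce strictness in $\cat{bBan}^M_\N$ to the invertibility, \emph{within} $\cat{bBan}^M_\N$, of the canonical morphism $\theta=(\theta_x):\coim(T_x)\to\im(T_x)$. First I check that kernels, cokernels, images and coimages in $\cat{bBan}^M_\N$ are still computed pointwise. By the remark preceding Theorem \ref{thm: bBan QA}, the kernel and cokernel of Lemma \ref{Lem: ker and coker} lie in $\cat{bBan}^M$: the inclusions and quotient maps have norm at most $1$ pointwise. Hence $\coim(T_x)=(A_x/\ker T_x)$ with the quotient norm, $\im(T_x)=(\overline{T_xA_x})$ with the norm from $B_x$, and $\theta_x$ is the map induced by $T_x$. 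By definition, $(T_x)$ is strict exactly when $\theta$ is an isomorphism in $\cat{bBan}^M_\N$, i.e. when there is a morphism $(\psi_x)$ that is measurable, essentially uniformly bounded, and inverse to $\theta$ (almost everywhere, or everywhere after modifying on a null set).

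For ($\Rightarrow$): if $\psi$ is such an inverse, then $\theta_x$ is a bijection onto $\im T_x$ for (almost) all $x$. Writing $w=T_xv$ with $[v]\in\coim T_x$, we get
\[\|[v]\|_{\coim T_x}=\|\psi_x\theta_x[v]\|\le \|\psi\|_\infty\|T_xv\|_{\im T_x}, \qquad \|T_xv\|_{\im T_x}\le \|T\|_\infty\|[v]\|_{\coim T_x}.\]
So one may take $\alpha=\|\psi\|_\infty^{-1}$ and $\beta=\|T\|_\infty$; note that $\|\theta_x\|\le\|T_x\|$. I will read the ratio in the statement as comparing the two norms on the common underlying vector space, identified via $\theta_x$.

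For ($\Leftarrow$): the lower bound $\alpha$ gives that $\theta_x$ is injective with closed range. Its range is dense in $\im T_x$, so $\theta_x$ is a bijection, and $\psi_x:=\theta_x^{-1}$ satisfies $\|\psi_x\|\le\alpha^{-1}$. Measurability of $(\psi_x)$ is the main obstacle, since Proposition \ref{Prop: pointwise iso implies iso} requires a Schauder-basis hypothesis. I would avoid that hypothesis using the uniform bound, as follows.
\begin{enumerate}
\item Take step basic families $\theta^n\to\theta$ in operator norm almost everywhere.
\item By Egorov, on sets of measure arbitrarily close to full measure the convergence is uniform. On such a set, for large $n$ we have $\|\theta^n_x-\theta_x\|<\alpha/2$, so $\theta^n_x$ is invertible, with $(\theta^n_x)^{-1}$ given by a Neumann series and bounded by $2\alpha^{-1}$.
\item Each $(\theta^n_x)^{-1}$ is a step family. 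By Proposition \ref{Prop: Decent=Everything}, it is a norm-limit of step basic families.
\item Pointwise we have $\|(\theta^n_x)^{-1}-\psi_x\|\le 2\alpha^{-2}\|\theta^n_x-\theta_x\|\to0$. A diagonal argument over an exhausting sequence of Egorov sets, as in the end of the proof of Lemma \ref{Lem: ker and coker}, then shows that $\psi$ is Bochner measurable and essentially bounded by $\alpha^{-1}$.
\end{enumerate}
This gives the inverse in $\cat{bBan}^M_\N$, and hence $(T_x)$ is strict.
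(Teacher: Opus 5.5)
Your reduction matches the paper's proof: strictness of $(T_x)$ is equivalent to the canonical $\theta\colon\coim(T_x)\to\im(T_x)$ being invertible in $\cat{bBan}^M_\N$, with both objects computed pointwise. Your ($\Rightarrow$) direction also matches. In fact the paper's proof consists only of this identification. It reads an isomorphism in $\cat{bBan}^M_\N$ as ``the identity on the common dense subspace is uniformly bounded with uniformly bounded inverse'', and never says why the pointwise inverse $\psi_x=\theta_x^{-1}$ is a \emph{morphism}. You are right that this is the real issue in ($\Leftarrow$), but the argument you give for it does not work.

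The step that fails is step 3, the claim that each $(\theta^n_x)^{-1}$ is a step family. A step basic family only has piecewise constant matrix coefficients. Its pointwise inverse, however, is computed in the completions $\coim T_x$, $\im T_x$, whose norms still vary inside each piece. When $\theta^n_x$ does not map the algebraic span of the source basis onto that of the target basis, $(\theta^n_x)^{-1}c_j(x)$ is a limit taken in the $x$-dependent norm, and its coefficients can change with $x$.

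Here is a concrete example. On finite sequences, for $t\in[0,1]$, let $\Gamma_t(v)=\max\bigl(\sup_m|v_m|,\ \sup_k\bigl|\sum_{m\ge k}\lambda_m(t)v_m\bigr|\bigr)$, where $\lambda_m(t)=1-t$ for $m$ even and $\lambda_m(t)=-t$ for $m$ odd. For every $t$, both the unit vectors $\{c_m\}$ and the set $\{c_2-c_1\}\cup\{c_{m+2}-c_m\}_{m\ge1}$ are normalised M-bases of the completion $C_t$. So the identity of $C_t$, read from the second basis to the first, is a constant basic family of isometries. Yet the functional biorthogonal to $c_2-c_1$ is the continuous extension of $v\mapsto\sum_m\lambda_m(t)v_m$. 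Hence the inverse has a matrix coefficient equal to $-t$, which is not a step function of $t$.

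Two further points block the rest of your argument. Proposition \ref{Prop: Decent=Everything} concerns a single operator, so it gives no measurable choice of basic approximants that is uniform on pieces. The Neumann series for $(\theta^n_x)^{-1}$ contains $\psi_x$ itself, so it cannot transfer measurability either. (As a minor aside, Egorov is unnecessary here, and in the form you use it requires finite measure.)

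What is missing is a genuine argument that $\psi$ is an almost-everywhere operator-norm limit of step basic families. The proof of Proposition \ref{Prop: pointwise iso implies iso} glosses over the same step, so you cannot simply borrow it from there.
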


\begin{proof}
    We can decomplete so that the coimage and image live on the same dense vector space, so that the difference is only on the norm, inherited from that on $A_x$ and $B_x$ respectively. That $(\coim T_x)\isomto (\im T_x)$ in $\cat{bBan}^M_\N$ means that the identity map on the underlying vector space is uniformly bounded with uniformly bounded inverse with respect to these different norms, and we can take these bounds to be $\beta$ and $\alpha^{-1}$ respectively. 
\end{proof}

We are now ready to define $L^p$-direct integrals. Recall that $M$ is a topological space equipped with a Radon measure $\mu$ on its Borel $\sigma$-algebra.

\begin{definition}
    Let $p\in [1,\infty]$ and $\Big\{(B_x,\{b_i(x)\}_{i\in w})\Big\}_{x\in M}\in\cat{bBan}^M_w$. If $\KK=\R$ or $\C$ is Archimedean and $p<\infty$, the $L^p$-direct integral is the normed vector space
    \[\int^{\oplus,p}_MB_xdx:=\left\{f(x)\in \int^mB_xdx:  \int_M\|f(x)\|_x^p dx<\infty\right\}.\]
    Whether $\KK$ is Archimedean or non-Archimedean and $p=\infty$,  one can similarly define
    \[\int^{\oplus,\infty}_MB_xdx:=\left\{f(x)\in \int^mB_xdx:  \esssup_{m\in M}\|f(x)\|_{B_x} <\infty\right\}\]
    and its sibling 
    \[\int^{\oplus,0}_MB_xdx:=\left\{(v_x)\in \int^{\oplus,\infty}_MB_xdx: \forall \epsilon>0, \mu\{x\in M: \|v_x\|_{B_x}>\epsilon\}<\infty \right\}\]
    whose sections are those that vanish at $\infty$.
    
    All the above are complete normed spaces (even an inner product space, in the case $p=2$ and all $B_x$ are Hilbert spaces) with norm $\|\cdot\|_p$ given by
    \[\|(v_x)\|_p=\left(\int_M\|v_x\|_x^pdx\right)^{1/p}\quad \text{for }p<\infty;\]
    \[\|(v_x)\|_\infty=\|(v_x)\|_0=\esssup_x\left\{\|v_x\|_x\right\}.\]
    When unclear, we may also denote the norm $\|\cdot\|_p$ by $\|\cdot\|_{\oplus,p}$, or $\|\cdot\|_{\oplus}$.   
\end{definition}
\begin{description}[font=\normalfont\itshape\space]
    \item[Remark 1:]  In the case $B_x=B$ is a constant abstract Banach bundle, then 
    \[\int^{\oplus,p}_MB_xdx=L^p(M;B)\]
    is the Bochner $L^p$ space with $B$-coefficients. 

    When each $B_x$ is a Hilbert space and $p=2$, we can call this the Hilbert-direct integral and $\int^{\oplus}_MB_xdx$ coincides with the usual direct integral as defined in texts such as \cite{Nielsen80} or Chapter 8 of \cite{DeitmarPHA}. 
    \item[Remark 2:] In the non-Archimedean case, because non-Archimedean Banach spaces are required to satisfy an ultrametric inequality, only the direct integrals $\int^{\infty_M}$ and $\int^{\oplus,0}_M$ make sense, and are therefore defined. 
    \item[Remark 3:] There is a big difference between the $p<\infty$ and $p=\infty,0$ cases. It is a fact that if $p<\infty$, and $M$ is a Polish space, then $\int^{\oplus,p}_MB_xdx$ is separable. 

    However, in the $p=\infty$ case, $\int^{\oplus,\infty}_MB_xdx$ is rarely separable, because whenever there is an uncountable collection of measurable sets $\{U_i\subseteq M\}_{i\in\aleph_1}$, any pair of the uncountably many functions $\mathds{1}_{U_i}b_1(x)$ are distance $1$ from each other, and therefore there can be no countable dense subset. 
    A similar story holds for $\int^{\oplus,0}$, except one requires the subsets $U_i$ to have finite measure for the counterexample to work.

    In fact, this counterexample shows that $\int^{\oplus,\infty}B_xdx$ is separable only if $M$ has countably many measurable sets, and $\int^{\oplus,0}B_xdx$ is separable only if $M$ has countably many subsets of finite measure.
    These further imply that $M$ is equivalent to a finite(resp. countable) measure space with counting measure.    
\end{description}
 
    \begin{lemma}
        If $(T_x):(V_x)\rightarrow (W_x)$ is a bounded morphism, then it induces a bounded morphism of normed vector spaces 
        \[\int^{\oplus,p} V_xdx\xrightarrow{\int T}\int^{\oplus,p} W_xdx.\]
        If $T_x$ is an isometry for almost all $x\in M$, then $\int T$ is an isometry. 
    \end{lemma}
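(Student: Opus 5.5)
The plan is to define $\int T$ on representatives by $(v_x)\mapsto (T_xv_x)$ and then check four things in order: that this is well defined, that it is linear, that it is bounded with $\|\int T\|\leq \esssup_x\|T_x\|$, and that it preserves norms when $T_x$ is almost everywhere an isometry.

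\emph{Well-definedness.} By Lemma \ref{Lem: morphisms and Meas. families}, if $(v_x)$ is a measurable section of $(V_x)$, then $(T_xv_x)$ is a measurable section of $(W_x)$. If $(v_x)\sim(v'_x)$, they differ only on a null set, so $(T_xv_x)$ and $(T_xv'_x)$ also differ only on a null set. The map therefore descends to the quotient, which is exactly the measurable sections functor $\int^m$ applied to $(T_x)$. Linearity is immediate, since it holds pointwise.

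\emph{Norm estimate.} Let $C:=\esssup_x\|T_x\|<\infty$, which is finite because $(T_x)$ is a morphism of $\cat{bBan}^M$. Pick a null set $N$ off which $\|T_x\|\leq C$. For $x\notin N$ we have $\|T_xv_x\|_{W_x}\leq C\|v_x\|_{V_x}$.
\begin{itemize}
\item For $p<\infty$, raise this to the $p$-th power and integrate to get $\|(T_xv_x)\|_p\leq C\|(v_x)\|_p$. The integrand $x\mapsto\|T_xv_x\|_{W_x}$ is measurable by the norm-measurability lemma extended to measurable families: approximate $(T_xv_x)$ by step families and use that a pointwise limit of measurable functions is measurable.
\item For $p=\infty$, take the essential supremum of the pointwise inequality.
\item For $p=0$, note that $\{x:\|T_xv_x\|>\epsilon\}\subseteq\{x:\|v_x\|>\epsilon/C\}\cup N$. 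The right-hand side has finite measure, so the vanishing-at-infinity condition is preserved.
\end{itemize}
In all cases $\int T$ maps $\int^{\oplus,p}_M V_x\,dx$ into $\int^{\oplus,p}_M W_x\,dx$ and is bounded with norm at most $C$.

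\emph{Isometry.} If $T_x$ is an isometry off a null set, then $\|T_xv_x\|=\|v_x\|$ almost everywhere. Integrals and essential suprema ignore null sets, so $\|\int T(v)\|_p=\|v\|_p$ for every $p$.

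The only point needing care is the measurability of $x\mapsto\|T_xv_x\|_{W_x}$, which is required for the integrals to make sense. I would handle it with Appendix \ref{App: A}, which says weakly measurable sections are Bochner measurable, together with the earlier lemma that norms of constant families are measurable, passing to almost-everywhere limits of step families. Everything else is a pointwise inequality.
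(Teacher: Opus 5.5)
Your proposal is correct and takes the same route as the paper. Measurability of $(T_xv_x)$ comes from Lemma \ref{Lem: morphisms and Meas. families}; the bound follows by integrating (or taking the essential supremum of) the pointwise estimate $\|T_xv_x\|_{W_x}\leq C\|v_x\|_{V_x}$; and the isometry claim follows from $\|T_xv_x\|=\|v_x\|$ almost everywhere. You are in fact more thorough than the paper, which writes out only the Archimedean $p<\infty$ case and leaves implicit the well-definedness on almost-everywhere classes, the $p=0,\infty$ cases, and the measurability of $x\mapsto\|T_xv_x\|_{W_x}$.
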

    \begin{proof}
        Since the question of measurability is sorted in Lemma \ref{Lem: morphisms and Meas. families}, the only thing to check is that whenever $(v_x)$ is an $
        L^p$ family, then so is $(T_xv_x)$, which we only prove in the $p<\infty$, Archimedean case, all other cases are just as simple.         
        If $||T_x||\leq M$ for almost all $x$, then
        \[\int_M||T_xv_x||_p^pdx\leq \int_M||T_x||^p \ ||v_x||_p^{p} dx \leq M^p\int ||v_x||^{p}
        _pdm<\infty\]
        so $(T_xv_x)$ is an $L^p$ family. Similarly, whenever $T_x$ is an isometry, $||T_xv_x||_p=||v_x||_p$, so the lemma easily follows. 
    \end{proof}

    \begin{remark}
        It follows that $\int^{\oplus,p}_M$ is a functor from the subcategory $\cat{bBan}^M_w$ to $\cat{Ban}$.
    \end{remark}

\begin{prop}{\label{Prop: DI strong exact}}
     The functor $\int^{\oplus,p}_M:\cat{bBan}^M_\N\to \cat{Ban}$ is strongly exact.
\end{prop}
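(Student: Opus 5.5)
We have to show that $\int^{\oplus,p}_M$ is both strictly exact and strictly coexact. Since kernels, cokernels, images, coimages and strictness in $\cat{bBan}^M_\N$ are all computed pointwise (Lemma \ref{Lem: ker and coker} together with the characterisation of strictness), it suffices to prove three things: that $\int^{\oplus,p}_M$ commutes with kernels, that it commutes with cokernels of strict morphisms, and that it carries strict morphisms to strict morphisms. Strong exactness then follows from the quasi-abelian definitions.

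\textbf{Step 1: kernels.} Take $(T_x):(B_x)\to(C_x)$ with $\|T_x\|\le K$ almost everywhere. We claim that $\ker\int T=\int^{\oplus,p}_M\ker T_x\,dx$ isometrically. A section $(v_x)$ with $T_xv_x=0$ for almost all $x$ can be modified on a null set so that $v_x\in\ker T_x$ everywhere. Its measurability with respect to the M-basis of $(\ker T_x)$ follows because the inclusion $\iota$ is a morphism, and the pointwise norm is unchanged. So this step is routine.

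\textbf{Step 2: strict morphisms go to strict morphisms.} By Lemma \ref{lem: uniformly strict}, if $(T_x)$ is strict there are uniform constants with $\alpha\|w\|_{\coim T_x}\le\|T_xw\|_{C_x}$. Hence $T_x$ has closed range, and for every $y\in\im T_x$ and every $\epsilon>0$ there is a preimage $u$ with $\|u\|\le(\alpha^{-1}+\epsilon)\|y\|$. For a section $(y_x)$ in the image integral we must choose such preimages \emph{measurably}. This is the main obstacle. I would handle it as follows. Use the pointwise isomorphism $\coim T_x\to\im T_x$, which is an isomorphism in $\cat{bBan}^M$ by Proposition \ref{Prop: pointwise iso implies iso} (with uniform bounds by Lemma \ref{lem: uniformly strict}). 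Its inverse gives a measurable section of $(B_x/\ker T_x)$. To lift through the quotient $B_x\to B_x/\ker T_x$ with norm control, approximate by step functions via Lemma \ref{Lem: Rapid convergence} and pick, on each step set, a lift from the countable dense set $S_Q$ with norm at most $(1+\epsilon)$ times the quotient norm. A measurable selection from countably many candidates is measurable. Passing to a rapidly convergent telescoping series then yields a measurable lift $(u_x)$ with $\|u_x\|\le C\|y_x\|$. Integrating gives $\|u\|_p\le C\|y\|_p$. Therefore $\int T$ has closed image equal to $\int^{\oplus,p}\im T_x$, and the map $\coim\int T\to\im\int T$ is a topological isomorphism.

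\textbf{Step 3: cokernels.} For any morphism, the image $\int^{\oplus,p}\overline{\im T_x}$ is closed, and it contains $\overline{\im\int T}$. Equality holds because step sections with values in $T_x(S_Q)$ are dense. Thus $\coker\int T$ maps to $\int^{\oplus,p}\coker T_x$. Surjectivity and isometry of this map again use the measurable near-optimal lifting from Step 2, applied now to the quotient $C_x\to\coker T_x$. This shows that $\int^{\oplus,p}(C_x)\to\int^{\oplus,p}(\coker T_x)$ is a strict epimorphism with kernel $\int^{\oplus,p}\overline{\im T_x}$.

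Combining the three steps: a strictly exact sequence $(A_x)\xrightarrow{f}(B_x)\xrightarrow{g}(C_x)$ with $f$ strict satisfies $\im f_x=\ker g_x$ pointwise with uniform constants. By Steps 1 and 2, $\im\int f=\int\im f_x=\int\ker g_x=\ker\int g$, and $\int f$ is strict, so the image sequence is strictly exact. The coexact case is dual, using Step 3 for $g$. The same arguments apply verbatim to $p=\infty,0$ and to the non-Archimedean case, with esssup replacing the integrals.
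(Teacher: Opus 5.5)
Your architecture is the paper's: identify $\ker$, $\im$ and $\coim$ of $\int T$ with the integrals of the pointwise ones, and integrate the uniform constants of Lemma \ref{lem: uniformly strict}. The paper simply asserts that $\im\int T$ and $\coim\int T$ carry the integrated pointwise norms. You rightly see that this needs measurable near-optimal lifts, and Step 2 supplies them, so for strict morphisms your argument is more complete than the paper's.

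Two small repairs. First, Proposition \ref{Prop: pointwise iso implies iso} is not needed, and its Schauder hypothesis is unavailable anyway: strictness in $\cat{bBan}^M_\N$ already means the inverse of $\coim(T_x)\to\im(T_x)$ is a $\cat{bBan}^M_\N$-morphism. Second, $\iota$ being a morphism transports measurability from $(\ker T_x)$ to $(B_x)$, not back. You need the converse in Steps 1 and 2 whenever you regard a section lying pointwise in $\ker T_x$ or $\im T_x$ as a section of that sub-bundle. It follows from your countable-selection trick, since $x\mapsto\|v_x-\sum_i a_ie_i(x)\|_x$ is measurable for $(a_i)\in Q^{<\omega}$, where $(e_i(x))$ is the sub-bundle's M-basis.

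The genuine gap is Step 3. Its density claim $\overline{\im\int T}=\int^{\oplus,p}_M\overline{\im T_x}$ for arbitrary $T$ is exactly what strict coexactness needs when $f$ is not strict.

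For $p<\infty$ the claim is true, but not for the reason you give. Pointwise choices $q(x)\in S_Q$ with $T_xq(x)\approx y_x$ carry no norm control, so $(q(x))$ need not lie in $\int^{\oplus,p}_MB_x$. You must cut down to $\{\|q(x)\|_x\le R\}$ inside a finite-measure set and let $R\to\infty$, using dominated convergence against $\|y_x\|^p$.

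For $p=\infty$ no truncation can work, and the claim is false. Take $M=(0,1]$ with Lebesgue measure, $A=B=(\KK,\{1\})$ constant, $f_x=x\cdot\id$ and $g=0\colon B\to 0$.
\begin{itemize}
\item Pointwise $\overline{\im f_x}=\KK=\ker g_x$ and $g$ is strict, so $A\xrightarrow{f}B\xrightarrow{g}0$ is strictly coexact in $\cat{bBan}^M_\N$ (indeed $f$ is an epimorphism there).
\item For $u\in L^\infty$ one has $|xu(x)-1|\ge 1-x\|u\|_\infty$ a.e., so $\|xu-1\|_\infty\ge 1$.
\item Hence $1\notin\overline{xL^\infty}=\im\int f$, while $\ker\int g=L^\infty$.
\end{itemize}
Since $\mu(M)<\infty$, the same example works for $p=0$. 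Multiplication by $x$ on $M=\mathbb{Z}_p\setminus\{0\}$ does the same in the non-Archimedean case.

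So your closing sentence, that ``the same arguments apply verbatim to $p=\infty,0$'', fails. For $p\in\{0,\infty\}$ the functor is strictly exact, since your Steps 1 and 2 survive, but it is not strictly coexact. The paper's proof has the same blind spot: it treats $p=0,\infty$ by the same implicit identification of $\im\int T$ with $\int\im T_x$. For $p\in[1,\infty)$, your proof is correct once the truncation is added.
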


\begin{proof}
    Consider a null sequence 
    \[(A_x)\xrightarrow{(T_x)}(B_x)\xrightarrow{(S_x)}(C_x)\]
    in $\cat{bBan}^M_\N$. 
    The topologies on $\im (\int T_x) $ and $\ker(\int S_x)$ are given by the respective norms 
    \[\left[\int_M\|\cdot \|_{\im T_x}^pdx\right]^\frac{1}{p} \qquad \text{and} \qquad \left[\int_M\|\cdot \|_{\ker S_x}^pdx\right]^\frac{1}{p}\]
    when $p$ is finite and 
    \[\sup_{x\in M}\|\cdot \|_{\im T_x} \qquad \text{and} \qquad \sup_{x\in M}\|\cdot \|_{\ker S_x}\]
    when $p=0,\infty$.
    If the sequence $A_x\xrightarrow{T_x}B_x\xrightarrow{S_x}C_x$ is exact for each $x\in M$, then the image $\im T_x$ and kernel $\ker S_x$ correspond to the same subspace of $B_x$, and their norms $\|\cdot\|_{\im T_x}$ and $\|\cdot\|_{\ker S_x}$ are identical, both being expressible as the restriction of the norm on $B_x$. Thus, the norms on $\im(\int T_m)$ and $\ker(\int S_x)$ are identical, and the sequence is exact. 

    To show that $(T_x)$ strict implies $\int T_x$ strict, we note by Lemma \ref{lem: uniformly strict} that there are $\alpha,\beta\in (0,\infty)$ such that the two norms $\|\cdot\|_{\coim T_x}$ and $\|\cdot\|_{\im T_x}$ satisfy 
    \[0<\alpha\leq \frac{\|\cdot\|_{\im T_x}}{\|\cdot\|_{\coim T_x}}\leq \beta.\]
    independent of $x\in M$. 
    
    Thus, we obtain the inequalities:
    \[0<\int_M\alpha^p\|f(m)\|^p_{\coim T_m}dm\leq \int_M\|f(m)\|^p_{\im T_m}\leq \int_M\beta^p\|f(m)\|^p_{\coim T_m} \]
    from which it is clear that 
    \[0<\alpha\|f(m)\|_{\coim \int T_m}\leq\|f(m)\|_{\im \int T_m}\leq \beta\|f(m)\|_{\coim \int T_m}. \]
\end{proof}

\subsection{Coefficients-Continuity and the continuous sections functor}

In functional analysis, the non-separability of $L^{\infty}(M)$ is often a barrier to usefulness. It therefore makes sense to study the related closed subset of continuous functions. This is especially true in the non-Archimedean situation, where essentially the only norm available is the $L^\infty$ norm. This reintroduces separability. 
We therefore want to briefly talk about the notion of continuous sections to produce a better tool for the non-Archimedean situation. 

\begin{definition}{\label{Def: continuous family}}
    Let $\{B_x,\{b_i(x)\}_{i\in \omega}\}\in \cat{bBan}_{\omega}^M$. We say that an abstract Banach bundle $\{B_x\}$ is a \emph{continuous abstract Banach bundle} if every function of the form

\[x\mapsto \left\|\sum_{i}a_ib_i(x) \right\|_{B_x}\]
is continuous whenever $(a_i)\in k^{<\omega}$. 
\end{definition}

\begin{remark}\label{Rem: joint continuity}
    Notice that an abstract Banach bundles $\{B_x\}$ is continuous if and only if the function 
    \begin{align*}
        N:k^{<\omega}\times M&\to \R^{\geq 0}\\
        ((a_i),x)&\mapsto \left\|\sum_i a_ib_i(x)\right\|_x
    \end{align*}
    is continuous, when $\KK^{<\omega}$ is endowed with the $\ell^1$-norm $\|(a_i)-(a_i')\|_{k^{<\omega}}:=\sum_i |a_i-a_i'|$. In fact, the family of functions 
    \begin{align*}
        N_x:k^{<\omega}&\to k \\
         (a_i)&\mapsto N((a_i),x)
    \end{align*}
    is equicontinuous as $x$ varies in $M$. To see this, notice that if $(a_i),(a_i')\in k^{<\omega}$ and $x\in M$, then
    $$\Big|N_x((a_i))-N_x((a_i'))\Big|\le\Big|\sum_i(a_i-a_i')b_x(i)\Big|\le \Big|\Big| (a_i)-(a_i')\Big|\Big|_{k^{<\omega}}$$
    and recall that if $X,Y,Z$ are metric spaces and $f:X\times Y\to Z$ is such that 
    \begin{itemize}
        \item for every $x\in X$ the function $f(x,-)$ is continuous;
        \item the family of functions $f(-,y)$ is equicontinuous as $y$ varies in $Y$,
    \end{itemize}
    then $f$ is jointly continuous. 
\end{remark}

\begin{definition}
    Let $(B_x)$, $(C_x)$ be two bounded abstract Banach bundles in $\cat{bBan}^M$. We say a uniformly bounded measurable morphism $(T_x):(B_x)\to (C_x)$ is \emph{coefficients-continuous basic} if for all $i,j\in \omega$, the matrix coefficient
    \[x\mapsto c_j^*(x)T_xb_i(x)\]
    is continuous, and if for all $i$, there is some $N_i\gg0$ such that for all $j\geq N_i$, the matrix coefficient $c_j^*(x)T_xb_i(x)\equiv 0$.

    We say a morphism $T_x$ is \emph{coefficients-continuous} if there is a sequence $T_x^n$ of coefficients-continuous basic morphisms such that
    \[\|T_x-T_x^n\|_x\to 0 \]
    uniformly in $x\in M$. 

    A measurable section $(v_x)\in (B_x)$ is of course coefficients-continuous if the corresponding morphism $(k)\to(B_x)$ is coefficients-continuous. 
\end{definition}

We offer the warning to the reader that even if a measurable section $v_x$ is coefficients-continuous, its norm function $x\mapsto \|v_x\|_x$ may well not be continuous, as demonstrated by the following counterexample. 
\begin{example}
    Let $M=[0,1]$, and let $B_x=\begin{cases}
        \ell^1 &\text{if } x\leq 1/2 \\
        \ell^\infty &\text{if } x>1/2
    \end{cases}$
    with the usual bases. 
    Then $v_x=b_1(x)+b_2(x)$ is coefficients-continuous, but its norm function is $\|v_x\|_x=\begin{cases}
        2 &\text{if } x\leq 1/2 \\
        1 &\text{if } x>1/2
    \end{cases}$.
\end{example}

\begin{prop}
    If $(B_x)$ is a continuous abstract Banach bundle, then whenever $(v_x)$ is a coefficients-continuous section, the norm function 
    \begin{align}
        N':M&\to \R^{\geq0 } \\
        x&\mapsto \|v_x\|_x
    \end{align}
    is continuous.
\end{prop}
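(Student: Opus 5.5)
The plan is to reduce to finitely supported sections, where the joint continuity of Remark \ref{Rem: joint continuity} applies directly, and then to pass to the uniform limit using the reverse triangle inequality.

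\textbf{Basic sections.} By definition, since $(v_x)$ is coefficients-continuous, there is a sequence $(v^n_x)$ of coefficients-continuous basic sections with $\sup_{x\in M}\|v_x-v^n_x\|_x\to 0$. Here I view each $(v^n_x)$ as a morphism $(k)\to(B_x)$. Its matrix coefficients are $a^n_i(x):=b_i^*(x)v^n_x$. Each $a^n_i$ is continuous in $x$, and there is some $N_n$ with $a^n_i\equiv 0$ for $i\geq N_n$.

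First I check that $v^n_x=\sum_{i<N_n}a^n_i(x)b_i(x)$ for every $x$. The difference of the two sides is killed by every $b_j^*(x)$: for $j<N_n$ this uses biorthonormality, and for $j\ge N_n$ it uses the vanishing of the coefficients. Totality of the M-basis (Definition \ref{Def: Markushevich Basis}(4)) then forces the difference to be $0$. One small point: the zero vectors among the $b_i(x)$ are harmless, since the corresponding coefficients can be taken to be zero.

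Next, consider the map $x\mapsto (a^n_i(x))_i\in k^{<\omega}$. It takes values in the fixed finite-dimensional coordinate subspace $k^{N_n}$. It is continuous for the $\ell^1$-norm, because each of its finitely many coordinates is continuous. Hence
\[x\mapsto \|v^n_x\|_x=N\big((a^n_i(x))_i,x\big)\]
is the composite of the continuous map $x\mapsto((a^n_i(x)),x)$ with the function $N$. Since $(B_x)$ is a continuous abstract Banach bundle, $N$ is jointly continuous on $k^{<\omega}\times M$ by Remark \ref{Rem: joint continuity}. So this composite is continuous.

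\textbf{Uniform limit.} The reverse triangle inequality gives
\[\big|\|v_x\|_x-\|v^n_x\|_x\big|\le\|v_x-v^n_x\|_x .\]
The right-hand side tends to $0$ uniformly in $x$. Therefore $N'$ is a uniform limit of continuous functions, and hence continuous. The reverse triangle inequality holds for ultrametric norms as well, so the argument covers the non-Archimedean case too.

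\textbf{Expected obstacle.} The main step needing care is the joint continuity of $N$. Continuity in $x$ alone, for each fixed coefficient vector, would not suffice, because the coefficients $a^n_i(x)$ move with $x$. This is exactly why the equicontinuity estimate in Remark \ref{Rem: joint continuity} is needed. That estimate rests on the normalisation $\|b_i(x)\|\le 1$, which makes $N_x$ $1$-Lipschitz for the $\ell^1$-norm, uniformly in $x$.
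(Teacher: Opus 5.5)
Your proposal is correct and follows the paper's proof: you approximate $(v_x)$ uniformly by coefficients-continuous basic sections, use the joint continuity of $N$ from Remark~\ref{Rem: joint continuity} to get continuity of $x\mapsto\|v^n_x\|_x$, and then pass to the uniform limit. You also fill in steps the paper leaves implicit. These are the identification $v^n_x=\sum_{i<N_n}a^n_i(x)b_i(x)$ via totality of the M-basis, the $\ell^1$-continuity of the coefficient map, and the reverse triangle inequality in the uniform-limit step.
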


\begin{proof}
    There is a sequence $(v_x^n)\in (B_x^n)$ of coefficients-continuous sections that converges uniformly to $(v_x)$. Therefore, by the remark following Definition \ref{Def: continuous family}, the composition 
    \begin{align}
        M\to k^n\times M\xrightarrow{N}\R^{\geq 0} \\
        x\mapsto \big((v^n_x),x\big)\to \|v^n_x\|_x
    \end{align}
    is continuous. Then $\|v_x\|_x$ is the uniform limit of continous functions, proving the proposition.
\end{proof}

\begin{prop}
    Let $(B_x,\{b_i(x)\}_{i\in \omega}),(C_x,\{c_i(x)\}_{i\in \omega})$ and $(D_x,\{d_i(x)\}_{i\in \omega})$ be abstract Banach bundles of $\cat{bBan}^M$.
    If $(T_x):(B_x)\to(C_x)$ and $(S_x):(C_x)\to (D_x)$ are coefficients-continuous morphisms, then so is the composition $(S_x\circ T_x)$. 
\end{prop}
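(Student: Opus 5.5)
The plan mirrors the proof of Lemma \ref{lem: composition}: first treat coefficients-continuous basic morphisms, then pass to uniform limits.

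\textbf{Step 1 (basic case).} Let $(T_x)$ and $(S_x)$ be coefficients-continuous basic. Because $S_x$ is basic, for each $j$ the functional $d_j^*(x)S_x$ on $C_x$ agrees on $\Span\{c_s(x)\}$ with the finite combination $\sum_s \alpha_s(x)c_s^*(x)$, where $\alpha_s(x)=d_j^*(x)S_xc_s(x)$. Then
\[(S\circ T)_{ij}(x)=d_j^*(x)S_xT_xb_i(x)=\sum_s S_{sj}(x)\,T_{is}(x).\]
Since $T$ is basic, only $s<N_i^T$ contribute, so the sum is finite. It is therefore a finite sum of products of continuous functions, hence continuous.

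For the vanishing condition, $T_xb_i(x)\in\Span\{c_s(x):s<N^T_i\}$, and each $S_xc_s(x)$ lies in $\Span\{d_j(x):j<N^S_s\}$. So $(S\circ T)_{ij}\equiv 0$ for $j\ge\max_{s<N_i^T}N^S_s$. Uniform boundedness is immediate from $\|S_xT_x\|\le\|S_x\|\|T_x\|$, and measurability follows from Lemma \ref{lem: composition}. Hence $S\circ T$ is coefficients-continuous basic.

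One subtlety should be addressed first. The identity $d_j^*S_xT_xb_i=\sum_s S_{sj}T_{is}$ needs $T_xb_i(x)$ to be a genuine finite combination of the $c_s(x)$, including at points where some $c_s(x)=0$. I would check that the expansion $T_xb_i(x)=\sum_s c_s^*(x)(T_xb_i(x))\,c_s(x)$ holds for finite-span vectors, which is true by biorthogonality.

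\textbf{Step 2 (limits).} Take coefficients-continuous basic $T^n\to T$ and $S^n\to S$ uniformly in operator norm. Since $T$ and $S$ are uniformly bounded, the approximants are eventually uniformly bounded: $\sup_x\|T^n_x\|\le \sup_x\|T_x\|+1$ for large $n$. Then
\[\|S_xT_x-S^n_xT^n_x\|\le\|S_x-S^n_x\|\,\|T_x\|+\|S^n_x\|\,\|T_x-T^n_x\|,\]
which tends to $0$ uniformly in $x$. Combined with Step 1, this exhibits $S\circ T$ as a uniform limit of coefficients-continuous basic morphisms.

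The main obstacle is Step 1: making sure the composition of basic operators genuinely stays within finite spans with continuously varying coefficients, including at points where some basis vectors vanish. The issue is resolved by the finite-sum matrix identity above. An ess.\ sup versus sup issue in the definition of uniform boundedness may need a null-set modification, but uniform convergence of the approximants already controls norms everywhere.
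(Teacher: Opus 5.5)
Your proposal is correct and follows essentially the same route as the paper: in the basic case you use the finite matrix identity $d_j^*S_xT_xb_i=\sum_s S_{sj}T_{is}$, and in general you use the triangle-inequality estimate, with the approximants' norms eventually bounded by the limit's norm plus $1$. One small correction: your opening claim that $d_j^*(x)S_x$ agrees on $\Span\{c_s(x)\}$ with a \emph{finite} combination of the $c_s^*(x)$ is false in general, because basicness only makes each column $S_xc_s(x)$ finitely supported, not each row; you never actually use this claim, though, since the finiteness of the sum comes from $T$, as you correctly note.
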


\begin{proof}
    Suppose $T_x$ and $S_x$ are both coefficients-continuous. If $(S_x)$ and $(T_x)$ are coefficients-continuous basic morphisms, then the identities
    \[d_k^*(x)S_xT_xb_i(x)=\sum_j d_k^*(x)S_x c_j(x)c_j^*(x)T_xb_i(x)\]
    follow because all but finitely many terms in this sum are zero. 
    It thus follows that $S_xT_x$ is coefficients-continuous basic. 

    If $S_x$ and $T_x$ are not basic, 
    then there are sequences of coefficients-continuous basic morphisms $S_x^n$ and $T^n_x$ that converges uniformly to $S_x$ and $T_x$ respectively. Therefore, for $n$ large enough,
    \begin{align*}
        \|S_xT_x-S^n_xT_x^n\|&\leq \|S_xT_x-S_xT_x^n\|+\|S_xT_x^n-S_x^nT_x^n\| \\
        &\leq \|S_x\|\|T_x-T_x^n\|+\|S_x-S_x^n\|\|T_x^n\|\\
        &\leq \|S_x\|\|T_x-T_x^n\|+\|S_x-S_x^n\|\big(\|T_x\|+1\big)
    \end{align*}
    because $\|T_x^n\|\leq \|T_x\|+1$ for all $n\gg0$. Because $(T_x)$ and $(S_x)$ are uniformly bounded, it follows that this sequence converges to zero uniformly in $x$. Therefore, $(S_xT_x)$ is coefficients-continuous. 
\end{proof}

\begin{remark}
    Suppose $(T_x):(B_x)\to (C_x)$ is a coefficients-continuous basic morphism. For each $n\in \N$, one can restrict $T_x$ to $(B_x^n)$ Then $T_x|_{B_x^n}$ can be viewed as a $N\times n$ matrix (with $N=\max\{N_1,\cdots, N_n\}$).
    The map 
    \[M\to M_{N\times n}(k): \qquad x\to T_x|_{B_x^n}\]
    is then continuous, because all coefficients of the finite dimensional vector space are continuous. 
\end{remark}

\begin{definition}
    The category $\cat{ccBan}^M$ of measurable abstract Banach bundles with coefficients-continuous morphisms is the subcategory of $\cat{bBan}^M$ whose objects are the continuous abstract Banach bundles and whose morphisms are the closed subspace of $\Hom_{\cat{bBan}^M}\big((B_x),(C_x)\big)$ consisting of the coefficients-continuous morphisms. 
    The continuous sections functor is the functor
    \[\int^{\oplus,c}_M:=\Hom_{\cat{ccBan}^M}\big((k),\bullet\big):\cat{cBan}^M\to \cat{Ban}.\]
\end{definition}

That $\int^{\oplus,c}_M$ is a functor follows from the previous proposition. 
\section{Algebraic and Analytic Properties of Direct Integrals}

In what follows, we will always assume that $\omega=\N$, and supress the index notation in the categories $\cat{bBan}^M:=\cat{bBan}^M_\N$, $\cat{mBan}^M:=\cat{mBan}^M_\N$ etc.

Also note that in sections 4.1, 4.3 and 4.4, due to technical issues of separability, we often need to restrict to the situation $p\in [1,\infty)$,(or even $p\in (1,\infty)$) and thus inherently require that the base field $\KK$ is Archimedean. 

\subsection{Disintegrations} 

Naturally, one wants to ask the question of how one can vary between index spaces $M$. This notion was tackled by Michael Wendt in \cite{hilbertsheaves}, though in a slightly different context with model-theoretic motivations. We outline the ideas here.

\begin{definition}
    We define the category $\cat{Disint}$ of disintegrations as the category whose objects are Polish spaces with Borel measures $(M,\Fc,\mu)$ and whose morphisms between two measure spaces $(X,\Fc_X,\mu)\to (Y,\Fc_Y,\nu)$ are pairs
    \[(f, \{\nu_y\})\]
    with $f:X\rightarrow Y$ a measurable function and $\{\mu_y\}$ a family of measures on the sub-measure spaces $f^{-1}(y)\subseteq X$ such that for any measurable subset $U\subseteq X$
    \[\mu(U)=\int_{f(U)}\mu_y(U\cap f^{-1}(y))d\nu(y).\]
    Such a morphism is known as a \emph{disintegration}.
\end{definition}
\begin{remark}
    Given a measurable function $f:(M,\mu)\to (M',\mu')$ of Polish spaces, a consequence of the disintegration theorem (see \cite{WikiDisint} for a statement and \S 10.4, \S 10.6 of \cite{Bogachev07} for a proof) is the necessary and sufficient condition for the existence of a disintegration compatible with $f,\mu$ and $\mu'$ is that $f$ reflects measure zero sets. In other words, for $U\subseteq M'$ measurable and $\nu(U)=0$, then $\mu(f^{-1}(U))=0$.     
\end{remark}

\begin{remark}
    Our category $\cat{Disint}$ is a full subcategory of the category considered in Section 2 of \cite{hilbertsheaves}. This is not a significant restriction, since for our purposes, it makes sense to consider Radon measures, and it is a technical requirement that we need the topologies in question to have a countable base.
\end{remark}

\begin{lemma}
    Let $(f,\{\mu_y\}):(M,\mu)\to (N,\nu)$ be a disintegration. For each $p\in [1,\infty)$, we obtain a functor
\begin{align*}
    \int^{\oplus,p}_f:\cat{bBan}^M&\to \cat{bBan}^{N} \\
    (V_x,\{b_i(x)\})_{x\in M}&\mapsto \left(\int^{\oplus,p}_{f^{-1}(y)}V_{x}d\mu_{y}(x),\{b'_{ij}(y)\}\right)_{y\in N}
\end{align*}
where the the M-basis $\{\tilde{b}_{ij}(y)\}$ chosen can be constructed functorially from $\{b_i(x)\}$.  
\end{lemma}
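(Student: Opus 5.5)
The plan is to build the fibrewise direct integrals, equip them with an M-basis coming from the generators $\mathds{1}_{U}b_i$, verify the axioms of Definition \ref{Def: Objects} over $N$, and then define the functor on morphisms by fibrewise integration.

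\textbf{Step 1: objects and the candidate M-basis.} Fix the countable base $\Bc=\{U_j\}_{j\in\N}$ of the topology on $M$. For $y\in N$ put $W_y:=\int^{\oplus,p}_{f^{-1}(y)}V_x\,d\mu_y(x)$. This is a Banach space by the previous subsection, and it is separable by the argument in Lemma \ref{Lem: ker and coker}: the step sections $\Sc(Q,f^{-1}(y))$, with values in $\Span_Q\{b_i(x)\}$ and supports in $\Bc\cap f^{-1}(y)$, are countable and dense. The candidate generating family is the countable collection $v_{ij}(y):=\mathds{1}_{U_j\cap f^{-1}(y)}\,b_i(x)$. Its span is dense in $W_y$ whenever it is nonzero, because $b_i$ and $\Bc$ generate.

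For the dual side I would use the functionals
\[\phi_{ij}(y)(w):=\int_{U_j\cap f^{-1}(y)} b_i^*(x)(w_x)\,d\mu_y(x).\]
These are bounded whenever $\|b_i^*(x)\|$ is essentially bounded on the fibre. Otherwise I would first replace $U_j$ by the subsets $U_j\cap\{\|b_i^*(x)\|\le n\}$, which still form a countable family; this uses the measurability lemma for $\|\phi_x\|$ proved earlier. I would then apply the Gram--Schmidt Lemma \ref{Lem: Gram--Schmidt} to the pairs $(v_{ij}(y),\phi_{ij}(y))$, after enumerating $\N\times\N\cong\N$. Since the recipe is canonical once this enumeration is fixed, it yields $\{b'_{ij}(y)\}$ "functorially" from $\{b_i(x)\}$.

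\textbf{Step 2: the measurability hypotheses of Lemma \ref{Lem: Gram--Schmidt}.} This is the main obstacle. We need $y\mapsto \|\sum a_{ij}v_{ij}(y)\|_{W_y}$ to be measurable in $y$, which equals
\[\left(\int_{f^{-1}(y)}\Big\|\sum_{ij} a_{ij}\mathds{1}_{U_j}(x)b_i(x)\Big\|_x^p\,d\mu_y(x)\right)^{1/p}.\]
The integrand is a measurable function on $M$, by condition \ref{Condition 4} applied to each piece of the finite partition generated by the $U_j$. The measurability of $y\mapsto\int g\,d\mu_y$ for nonnegative measurable $g$ is exactly the standard property of disintegrations (Bogachev \S10.6), proved first for indicators and then extended by monotone convergence. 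The same argument, applied to $y\mapsto\phi_{ij}(y)(v_{kl}(y))=\int \mathds{1}_{U_j\cap U_l}b_i^*(b_k)\,d\mu_y$, gives hypothesis 3.

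Hypothesis 2, the measurability of dual norms, is the subtle point. I would compute $\|\sum a_{ij}\phi_{ij}(y)\|$ as a countable supremum over the dense set of $Q$-step sections of quotients of such measurable integrals, exactly as in the proof of measurability of $\|T_x\|$. Finally, the zero sets $Z_{ij}=\{y:\mu_y(U_j)=0\}$ are measurable for the same reason.

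\textbf{Step 3: morphisms and functoriality.} For a uniformly bounded morphism $(T_x)$ with $\esssup\|T_x\|\le C$, set $(\int_f T)_y(w):=(T_xw_x)_{x\in f^{-1}(y)}$. This is bounded by $C$ on each fibre by the Lemma preceding Proposition \ref{Prop: DI strong exact}. The disintegration identity $\mu=\int\mu_y\,d\nu$ ensures that a $\mu$-null exceptional set is $\mu_y$-null for $\nu$-almost every $y$, so the essential bound transfers.

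For measurability, approximate $(T_x)$ by step basic families $S^n_x$. Each induced $(\int_f S^n)_y$ has matrix coefficients $\phi_{kl}(y)\big(S^n v_{ij}(y)\big)$, which are integrals of measurable functions and hence measurable in $y$. I would then approximate these by step basic families on $N$ via Proposition \ref{Prop: Decent=Everything} and the rapid-convergence Lemma \ref{Lem: Rapid convergence}, together with a diagonal argument. Convergence in fibre operator norm follows from dominated convergence on each fibre, after using Egorov to make $S^n_x\to T_x$ uniform off small sets. Compatibility with composition and identities is immediate from the pointwise definition.
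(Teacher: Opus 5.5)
\textbf{Steps 1 and 2 versus the paper.} These steps reproduce the paper's construction: generators $\mathds{1}_{U_j}b_i$, fibrewise averaged functionals built from $b_i^*$, and Gram--Schmidt after enumerating the index pairs. You are more careful than the paper about boundedness of the functionals and about the hypotheses of Lemma~\ref{Lem: Gram--Schmidt}. The paper instead spends its effort on independence of the choices of base and enumeration, and never treats morphisms.

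\textbf{The gap is Step 3.} Measurable matrix coefficients of $(\int_f S^n)_y$ only make $y\mapsto(\int_f S^n)_y$ a \emph{weakly} measurable family, and nothing you cite upgrades this:
\begin{itemize}
\item Proposition~\ref{Prop: Decent=Everything} is a pointwise density statement with no control in $y$.
\item Lemma~\ref{Lem: Rapid convergence} concerns sections, not operators.
\item The paper's own remark with $t\mapsto\mathds{1}_{A_t}$ shows that weak measurability does not imply operator-norm (Bochner) measurability.
\end{itemize}
Your limiting argument also fails. The operator norm of a fibrewise-decomposable operator is $\|(\int_f R)_y\|=\esssup_{x\in f^{-1}(y)}\|R_x\|$. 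So fibrewise operator-norm convergence needs $S^n_x\to T_x$ uniformly on almost every fibre. Dominated convergence gives only strong convergence, and the set discarded by Egorov still counts fully in the supremum.

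\textbf{Counterexample: Step 3 cannot be repaired as stated.}
\begin{itemize}
\item Take $M=[0,1]^2$, $N=[0,1]$, $f(y,t)=y$, $\mu_y$ Lebesgue measure on the fibre, and $V_x=\KK$ with $b_1=1$.
\item Let $T_{(y,t)}=\mathds{1}_{\{t\le y\}}$. This is a step basic family with $\|T_x\|\le 1$, so a morphism of $\cat{bBan}^M$.
\item Use the base of rational boxes $A_a\times B_b$ together with the strips $[0,1]\times B_b$. Enumerate so that each strip precedes every box with the same $B_b$.
\item At every $y$, each generator or functional coming from a box is either $0$ or equal to an earlier strip term. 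So Gram--Schmidt never selects it.
\item Hence the bundle you build over $N$ is the constant space $L^p[0,1]$ with a $y$-independent M-basis.
\item For such a bundle, step basic families take only countably many operator values. So every morphism is essentially separably valued in operator norm.
\item But $(\int_f T)_y$ is multiplication by $\mathds{1}_{[0,y]}$, and $\|(\int_f T)_y-(\int_f T)_{y'}\|=\|\mathds{1}_{(y',y]}\|_\infty=1$ for $y\neq y'$.
\end{itemize}
Hence $\int_f T\notin\Hom_{\cat{bBan}^N}$.

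What your argument actually yields is a uniformly bounded family with measurable matrix coefficients. To land in $\cat{bBan}^N$ you must either weaken the morphisms on $N$ to such families or restrict the morphisms allowed on $M$. The paper's choice-independence argument also checks only matrix coefficients, and it does not address morphisms, so it does not close this gap either.
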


\begin{proof}
    Pointwise, we already know how to construct $\int^{\oplus,p}_{f^{-1(y)}}B_xd\mu_y(x)$, the issue only being finding a sequence of M-bases for each $y\in N$.
    To construct the M-basis, choose a countable base $\{M_j\}_{j\in J}$ of the topology of $M$, indexed by $J$, and let $\{b_{ij}(y)\}_{(i,j)\in \omega\times J}$ be a collection of functions in $\int^{\oplus,p}_{f^{-1}(y)}B_x $ defined by
    \begin{align*}
        b_{ij}(y): f^{-1}(y)&\to \bigcup_{x\in f^{-1}(y)}B_x \\ 
        x&\mapsto \begin{cases}
        b_i(x) & \text{if } x\in M_j \\
        0& \text{otherwise}
    \end{cases}
    \end{align*}
    The $\{b_{ij}(y)\}_{i,j}$ give a countable collection inside $\int^{\oplus,p}_{f^{-1}(y)}B_xd\mu_y(x)$ whose span is dense, and automatically comes along with a set 
    \[\left\{b_{ij}^*(y): \int^{\oplus,p}_{f^{-1}(y)}V_xd\mu_y(x)\ni \phi\mapsto \frac{1}{\mu_y(M_j\cap f^{-1}(y))}\int_{M_j\cap f^{-1}(y)}b^*_i(x)\phi(x)d\mu_y(x)\right\}_{(i,j)\in \omega\times J}\]  
    which separates the points of $\int^\oplus_{f^{-1}(y)}V_x$. (One interprets $b_{ij}^*(y)=0$ in the case $\mu_y(M_j\cap f^{-1}(y))=0$).
    By choosing a bijection $\omega\times J\cong \omega$, and performing the Gram--Schmidt algorithm, one can transform $\{\tilde{b}_{ij}(y),\tilde{b}^*_{ij}(y)\}$ into a Markushevich basis, and complete the construction.

    We have now completed the construction of $\{b'_{ij}\}$, and it remains only to show that (up to isomorphism) the object is choice-free - namely, that the integral $\int^{\oplus,p}_f$ is independent of the choice of countable base of $M$ and the choice of bijection $\omega\times J\cong \omega$. 
    
    Let $\{M_j\}_{j\in J},\{M_j'\}_{j\in J'}$ be two choices of countable base of the topology on $M$, and let $s:\omega\times J\xrightarrow{\sim}\omega$ (resp. $s':\omega\times J'\xrightarrow{\sim}\omega$ be corresponding choices of bijections on the index sets. 

    We show that the map
    \[(\id_y)_{y\in N}:\left(\int^{\oplus,p}_{f^{-1}(y)}V_x, \{\tilde{b}_{ij}(y)\}\right)\to \left(\int^{\oplus,p}_{f^{-1}(y)}V_x, \{\tilde{b}'_{ij}(y)\}\right) \]
    is a measurable isomorphism. 
    Firstly, we argue that change in basis determined by the choices $s$ and $s'$, and the Gram--Schmidt algorithm are measurable, since each element $\tilde{b}_{ij}(y)$ is replaced by a finite linear combination of such vectors determined algorithmically via a measurable process (see Lemma \ref{Lem: Gram--Schmidt}). 
    Thus, it is sufficient to show that the maps 
    $y\mapsto (\tilde{b}')^*_{st}(y)\tilde{b}_{ij}(y)$ 
    are all measurable. 
    We have
    \begin{align*}
        (\tilde{b}')^*_{st}(y)\tilde{b}_{ij}(y)&= \frac{1}{\mu_y(M'_t\cap f^{-1}(y))}\int_{M'_t\cap f^{-1}(y)}(b')^*_s(x)\mathds{1}_{M_j}b_i(x)d\mu_y(x)\\
        &=\begin{cases}
            0 & \text{if } s\neq i \\
            \frac{\mu_y(M'_t\cap f^{-1}(y)\cap M_j)}{\mu_y(M'_t\cap f^{-1}(y))} &\text{otherwise}
        \end{cases}
    \end{align*}
    which is clearly a measurable function of $y$. 
\end{proof}

More generally, we have:
\begin{theorem}
    For each fixed $p\in [1,\infty)$, direct integration is a pseudo-functor from the category of disintegrations to the category of (small) categories
    \[\int^{\oplus,p}:\cat{Disint}\to \cat{CAT}.\]
\end{theorem}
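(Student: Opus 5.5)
The plan is to assemble the pseudo-functor from the data of the preceding lemma. On objects, send $(M,\mu)$ to $\cat{bBan}^M$. On a morphism $(f,\{\mu_y\}):(M,\mu)\to(N,\nu)$, take the functor $\int^{\oplus,p}_f:\cat{bBan}^M\to\cat{bBan}^N$ constructed in the previous lemma, after first checking that it really is a functor. On objects it is given in the lemma. On morphisms, a bounded family $(T_x)$ is sent to the family $\big(\int^{\oplus,p}_{f^{-1}(y)}T_x\,d\mu_y\big)_{y\in N}$. Pointwise this is bounded with norm at most $\esssup_x\|T_x\|$, so it is uniformly bounded. Measurability of the new family is checked first for step basic $(T_x)$: there its matrix coefficients against the bases $\tilde b_{ij}(y)$ are ratios of the form $\mu_y(\cdot)/\mu_y(\cdot)$, exactly as in the computation at the end of the previous lemma, and these are measurable in $y$. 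The general case then follows by approximation, since $\|\int T-\int S\|\le\esssup\|T-S\|$. Functoriality of each $\int_f$ (identities and composites) holds pointwise by the functoriality of $\int^{\oplus,p}$ already established.

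Next come the coherence isomorphisms. For composable disintegrations $(f,\{\mu_y\}):(X,\mu)\to(Y,\nu)$ and $(g,\{\nu_z\}):(Y,\nu)\to(Z,\rho)$, the composite disintegration is $g\circ f$ with fibre measures $\lambda_z=\int_{g^{-1}(z)}\mu_y\,d\nu_z(y)$. I would construct a natural isomorphism
\[
\Phi:\int^{\oplus,p}_g\circ\int^{\oplus,p}_f\Rightarrow\int^{\oplus,p}_{g\circ f}.
\]
Pointwise in $z$, this is the Fubini-type identification
\[
\int^{\oplus,p}_{g^{-1}(z)}\Big(\int^{\oplus,p}_{f^{-1}(y)}V_x\,d\mu_y\Big)d\nu_z\cong\int^{\oplus,p}_{(gf)^{-1}(z)}V_x\,d\lambda_z.
\]
It sends an iterated section $y\mapsto(x\mapsto v_x)$ to $x\mapsto v_x$, and it is an isometry because $\int\!\!\int\|v_x\|^p\,d\mu_y\,d\nu_z=\int\|v_x\|^p\,d\lambda_z$ by the defining property of disintegrations. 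Two things must then be shown. First, the identification is well defined and surjective: a jointly measurable section restricts to measurable sections of almost every fibre, and conversely an iterated measurable section is measurable. This should follow from the weak/Bochner equivalence of Appendix~\ref{App: A}, which lets me test measurability on the coefficients $b_i^*(x)(v_x)$, together with ordinary Fubini for scalar functions. Second, the family $(\Phi_z)_z$ is an isomorphism in $\cat{bBan}^Z$. For this I would check that its matrix coefficients with respect to the Gram--Schmidt bases of the previous lemma are measurable, again reducing to ratios of measures of basic sets, and then apply Proposition~\ref{Prop: pointwise iso implies iso}. If that proposition's Schauder hypothesis gets in the way, I would instead note that $\Phi^{-1}$ is given by the same explicit formula, so it is measurable by the same argument and has norm $1$. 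Naturality of $\Phi$ in $(V_x)$ is immediate, since both sides act by $v_x\mapsto T_xv_x$. The unitor for $\id_M$ with Dirac fibre measures $\delta_x$ is the evident isometry $\int^{\oplus,p}_{\{x\}}V\,d\delta_x\cong V_x$.

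Finally, the pentagon (associativity) and unit coherence axioms hold because every coherence map is, pointwise, the identity on the underlying sections $x\mapsto v_x$ after re-bracketing. Hence both sides of each diagram are equal as maps of sections. The main obstacle is the measurability bookkeeping in the second step. The M-bases on the iterated integral and on the single integral arise from different countable bases and Gram--Schmidt runs, so the real work is showing that $\Phi$ is a morphism of $\cat{bBan}^Z$, that is, a limit of step basic families. I expect this to go through using the independence-of-choices argument of the previous lemma, with the product base $\{M_j\cap f^{-1}(N_k)\}$ as the comparison base.
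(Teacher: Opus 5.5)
Your outline matches the paper's. You identify the underlying spaces of $\int^{\oplus,p}_g\int^{\oplus,p}_f V_x$ and $\int^{\oplus,p}_{g\circ f}V_x$ fibrewise via Fubini for the composite fibre measures, observe that all comparison maps are pointwise identities on sections, and read off naturality and the coherence diagrams from that. Your unitor for $(\id_M,\{\delta_x\})$ is, if anything, more careful than the paper's claim that $\int^{\oplus,p}_{\id_M}$ is literally the identity functor.

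\emph{The gap.} The one substantive point is that these identity families are morphisms of $\cat{bBan}^Z$. The paper dismisses this as ``a straightforward exercise'', and the way you propose to do the exercise does not suffice. Morphisms must be Bochner measurable, i.e.\ almost-everywhere operator-norm limits of step basic families. By the Remark following Lemma~\ref{matrix coefficients are measurable maps}, measurability of all matrix coefficients does not imply this. Consequently:
\begin{itemize}
\item Computing the coefficients of $\Phi$ (or of $\int^{\oplus,p}_f T$ for step basic $T$) as ratios of fibre measures only proves weak measurability.
\item The independence-of-choices argument you want to reuse has the same limitation, since it too only checks coefficients.
\item Proposition~\ref{Prop: pointwise iso implies iso} presupposes that $\Phi$ is already a morphism.
\end{itemize}

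\emph{An example.} The failure occurs even for change-of-basis identities. Let $\sigma_i(y)\in\{\pm1\}$ be read off from the $i$-th binary digit of $y\in[0,1]$, and consider the identity from the constant bundle $(\ell^2,\{e_i\})$ to $(\ell^2,\{\sigma_i(y)e_i\})$. Every coefficient is the two-valued step function $\sigma_j\delta_{ij}$. Yet a basic operator with fixed matrix $(s_{ij})$ lies within distance $1$ of the identity at $y$ only if $\sigma_i(y)\,\mathrm{Re}\,s_{ii}>0$ for all $i$, and this determines $y$ up to two values. Hence a sequence of step basic families can converge to this identity only on a countable set. What is missing is an actual argument that $\Phi_z$, its inverse, and $\int^{\oplus,p}_f T$ are almost-everywhere operator-norm limits of step basic families. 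Nothing in your sketch provides it.

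\emph{A second, repairable slip.} To pass from step basic to general $T$ you invoke $\|\int^{\oplus,p}_f T-\int^{\oplus,p}_f S\|\le\esssup\|T-S\|$. But the definition only supplies step basic $S^n$ with $S^n_x\to T_x$ almost everywhere, and this does not make $\esssup_x\|T_x-S^n_x\|$ small. For example, take the projection $f(s,y)=y$ from $[0,1]^2$ to $[0,1]$ and $S^n=\mathds{1}_{\{s\le 1/n\}}$. Then $S^n\to 0$ pointwise, while $\int^{\oplus,p}_f S^n$ is multiplication by $\mathds{1}_{[0,1/n]}$ and has norm $1$ on every fibre. You should first upgrade to uniform approximants, e.g.\ $R^\epsilon_x:=S^{n(x)}_x$ with $n(x)$ the least $n$ such that $\|S^n_x-T_x\|_x<\epsilon$; this is again step basic. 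However, this only transports whatever you proved in the step basic case, so it does not close the first gap.
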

In this context, this means with have the following data:

\begin{enumerate}
    \item A map of objects $M\mapsto  \cat{bBan}^M$;
    \item An association sending any disintegration $f:M\to N$ to the functor \[\int^{\oplus, p}_f:\cat{bBan}^M\to \cat{bBan}^N;\]
    \item The identity map $\id:M\to M$ is associated to the identity functor $\id_{\cat{bBan}^M}$;
    \item for any $f:M\to N$, $g:N\to P$, an isomorphism, 
    \[\eta_{g,f}:\int^{\oplus,p}_g\circ \int^{\oplus,p}_f\Rightarrow\int^{\oplus,p}_{g\circ f}\]
    which is natural in $f$ and $g$;
    \item The functors $\int^{\oplus,p}_{f},\int^{\oplus,p}_{f}\circ\int^{\oplus,p}_{\id_M}, \int^{\oplus,p}_{f\circ\id_M} ,\int^{\oplus,p}_{\id_N}\circ\int^{\oplus,p}_{f}$and $\int^{\oplus,p}_{\id_N\circ f}$ are all exactly the same functor;
    \item The natural isomorphism $\eta_{g,f}$ is associative in that the diagram
\[\begin{tikzcd}[cramped]
	{\int^{\oplus,p}_h\circ(\int^{\oplus,p}_g\circ\int^{\oplus,p}_f)} && {(\int^{\oplus,p}_h\circ\int^{\oplus,p}_g)\circ\int^{\oplus,p}_f} \\
	{\int^{\oplus,p}_h\circ\int^{\oplus,p}_{g\circ f}} && {\int^{\oplus,p}_{h\circ g}\circ\int^{\oplus,p}_{f}} \\
	{\int^{\oplus,p}_{h\circ (g\circ f)}} && {\int^{\oplus,p}_{(h\circ g)\circ f}}
	\arrow["{\id}",equals, from=1-1, to=1-3]
	\arrow["{\id_{\int h}\circ\eta_{g,f}}", Rightarrow, from=1-1, to=2-1]
	\arrow["{\eta_{h\circ g}\circ \id_{\int f}}"', Rightarrow, from=1-3, to=2-3]
	\arrow["{\eta_{h,g\circ f}}", Rightarrow, from=2-1, to=3-1]
	\arrow["{\eta_{h\circ g,f}}"', Rightarrow, from=2-3, to=3-3]
	\arrow[equals, from=3-1, to=3-3]
\end{tikzcd}\]
    commutes.
\end{enumerate}

Most of these properties are obvious (and much stronger than the general definition of a pseudofunctor - a relic of the fact that $\cat{Disint}$ is a $1$-category). 
In fact, it is only properties (4) and (6) that must be affirmed. 

\begin{proof}
    Let $f:M\to N$ and $g:N\to K$ be two disintegrations of Polish spaces. It is clear that the underlying Banach spaces of 
    \[\int^{\oplus,p}_g\int^{\oplus,p}_fV_x ~~ \text{and}~~ \int^{\oplus,p}_{g\circ f}V_x\]
    are identical, and it only remains to show that the family of identity maps in these spaces is measurable - a straightforward exercise. This is clearly a natural choice, and the diagram in part(6) is clearly commutative, since all maps are pointwise identity operators. 
\end{proof}

This functor is compatible with the direct integral in the sense that one can identify 
\[\int^{\oplus}_M=\int^{\oplus}_{M\to *}\]

\subsection{Continuous Abstract Banach Bundles and Local Isometric Approximation}

The defining algebraic property of the direct sum is its universal property;  morphisms $X\to \bigoplus_i X_i$ are in one-to-one correspondence with families of morphisms $\{X\to X_i\}_i$. The direct integral is different, in that there may be, in general, \emph{no} morphisms $X\to \int^{\oplus}X_i$ at all; a clear example given by the non-existence of representation homomorphisms between any one-dimensional representation of $\R$ and the regular representation $L^2(\R)\cong\int^{\oplus,2}_{i\R}\pi_{\alpha}$.
The fundamental reason behind this observation is that direct integrals do not remember the value of a function at any particular point, only its equivalence class of functions modulo measure zero-equivalence.

However, not all hope is lost. The most important property of integration is, of course, the fundamental theorem of calculus. One way of formulating this theorem is the following. 
For any \emph{continuous} function $f:\R\to \R$, 
\[\lim_{\epsilon_1,\epsilon_2\to 0^+}\frac{1}{\epsilon_1+\epsilon_2}\int_{x-\epsilon_2}^{x+\epsilon_1}f(t)dt=f(x).\]
This gives us hope that direct integration might, in some sense, approximate $B_x$ as we consider a `limit' of Banach spaces $\int^{\oplus,p}_UB_tdt$ over a sequence of shrinking neighbourhoods $U_n\subseteq M$ forming a neighbourhood basis of $x$ (so $\bigcap_n U_n=\{x\}$). The only question remaining is: in what sense does this happen?

There is an elementary obstruction, illustrated with the following example. 
\begin{example} Let $M=[0,1]$, and $B_x=k$ with M-basis $b_1(x)=1$ for all $x\in M$.

It is clear that $\int^{\oplus,p}_{[1/2-\epsilon, 1/2+\epsilon]}k=L^p([1/2-\epsilon, 1/2+\epsilon])$ is always infinite dimensional for all $\epsilon>0$, while $B_{1/2}=k$ is one dimensional, so any `limit' as $\epsilon\to 0$ is strictly larger than $B_{1/2}$. 
\end{example}

This example demonstrates that there is an obstruction to the existence of any isomorphism between $B_x$ and a `limit' $\varinjlim_{U\ni x}\int_UB_tdt$, but also hints at the important fact we prove in this section that $B_{1/2}$ can be approximately isometrically embedded in $\int_UB_tdt$, and this approximation improves to an arbitrary degree as the domain $U$ shrinks. 

Recall from Section 3.3, that an abstract Banach bundle $\{B_x,\{b_i(x)\}_{i\in \omega}\}\in \cat{mBan}_{\omega}^M$ is said to be \emph{continuous} if, for all $(a_i)\in k^{<\omega}$, every function of the form
\[x\mapsto \left\|\sum_{i}a_ib_i(x) \right\|_{B_x}\]
is continuous, and recall Remark \ref{Rem: joint continuity}, which tells us the function 
\begin{align*}
        N:k^{<\omega}\times M&\to k\\
        ((a_i),x)&\mapsto \left\|\sum_i a_ib_i(x)\right\|_x
    \end{align*}
    is continuous, when $\KK^{<\omega}$ is endowed with the $\ell^1$-norm $\|(a_i)-(a_i')\|_{k^{<\omega}}:=\sum_i |a_i-a_i'|$.

We also make the following observation. Let $x\in M$, and let $U$ be an (open) neighbourhood of $x$. 

There is a linear map 
\begin{align*}
    I_U:B_x^\circ&\to \int_U^{\oplus,p}B_t \\
    b_i(x)& \mapsto \frac{1}{\mu(U)^{1/p}}(b_i(t))_{t\in U}.
\end{align*}
In general, this map may not be bounded (for example, in the case $\ell^p(\N)\to \int_{p-\epsilon}^{p+\epsilon}\ell^t(\N)dt$ these maps are unbounded). However, one can restrict this map locally to $B_x^n=\Span\{b_1(x),\cdots ,b_n(x)\}$, on which $I_{U,n}=I_U|_{B^n_x}$ is  a continuous injection, and therefore, there are bounds 
\[m_{U,n}:=\inf_{\|v\|_x\leq 1} \|I_{U,n}(v)\|_{\oplus}, \qquad M_{U,n}:=\sup_{\|v\|_x\leq 1} \|I_{U,n}(v)\|_{\oplus}\]
where $\|\cdot \|_{\oplus}$ indicates the norm on $\int_U^{\oplus,p}B_t$,  so that
\[0< m_{U,n}\|v\|_x\leq \|I_{U,n}(v)\|_{\oplus}\leq M_{U,n}\|v\|_x\]
for all non-zero $v\in B^n_x$.

\begin{theorem}{\label{thm: approximate isometries}} Let $p\in [1,\infty]$ and $\KK$ is Archimedean or non-Archimedean.
    Suppose that $B_t$ is a continuous abstract Banach bundle, $x\in M$ and $\{U\}$ is a neighbourhood basis of $x$. Then for any $n\in \N$, the lower and upper bounds 
    $M_{U,n},m_{U,n}$ defined above tend to $1$ as $U$ shrinks to the point $\{x\}$. In other words, $I_{U,n}$ approximates an isometry, and this approximation improves arbitrarily as $U$ shrinks. 
\end{theorem}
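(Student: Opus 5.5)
The plan is to reduce the statement to a uniform-continuity estimate for the norm function $N$ on a compact set of coefficients. Fix $n$ and identify $B_x^n$ with $\KK^n$ via $v=\sum_{i\le n}a_ib_i(x)$. I will assume the neighbourhoods $U$ have $0<\mu(U)<\infty$, as is implicit in the definition of $I_U$. For such $v$ we have
\[
\|I_{U,n}(v)\|_{\oplus}^p=\frac{1}{\mu(U)}\int_U N\big((a_i),t\big)^p\,dt \quad (p<\infty),
\qquad
\|I_{U,n}(v)\|_{\oplus}=\esssup_{t\in U}N\big((a_i),t\big) \quad (p=\infty,0).
\]
In either case this is an average or essential supremum of $N((a_i),t)$ over $t\in U$, to be compared with $N((a_i),x)=\|v\|_x$. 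It therefore suffices to show
\[
\sup_{\|v\|_x=1}\;\sup_{t\in U}\big|N((a_i),t)-N((a_i),x)\big|\longrightarrow 0
\]
as $U$ shrinks. Given this, the average (or essential supremum) lies in $[1-\delta,1+\delta]$ for every unit $v$, so $m_{U,n}\ge 1-\delta$ and $M_{U,n}\le1+\delta$. Homogeneity of the norm then handles non-unit $v$.

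For the uniform estimate, first note that since the nonzero $b_1(x),\dots,b_n(x)$ are linearly independent, the unit sphere $S_x=\{(a_i)\in\KK^n:N((a_i),x)=1\}$ is compact in $\KK^n$. Here I use local compactness of $\KK$ and equivalence of norms on $\KK^n$. Zero basis vectors are the one wrinkle: if some $b_i(x)=0$, I discard those indices. By the hypothesis on zero sets and continuity, $\|b_i(t)\|=N(e_i,t)$ is then $0$ near $x$, so these coordinates contribute nothing for $t$ near $x$ either.

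Next, by Remark~\ref{Rem: joint continuity}, $N$ is jointly continuous on $\KK^{n}\times M$, with the equicontinuity bound $|N_t(a)-N_t(a')|\le\|a-a'\|_{\ell^1}$ uniform in $t$. A standard compactness argument then gives the uniform estimate. Suppose it failed: there would be $\epsilon>0$, points $t_k\to x$ drawn from shrinking members of the neighbourhood basis, and $a^{(k)}\in S_x$ with $|N(a^{(k)},t_k)-1|\ge\epsilon$. Pass to a convergent subsequence $a^{(k)}\to a\in S_x$. Joint continuity gives $N(a^{(k)},t_k)\to N(a,x)=1$, which is a contradiction. For this sequential argument I will use first countability, which holds because $M$ is Polish, as the paper assumes. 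Alternatively one can use a finite $\delta$-net of $S_x$ together with equicontinuity in $a$ and pointwise continuity in $t$ at the finitely many net points, which avoids sequences entirely. The net argument is the cleaner choice.

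Finally, I must check that $N(\cdot,t)^p$ is integrable over $U$, so that the averages make sense. This follows from boundedness of $N$ near $x$ on $S_x$, which the uniform estimate itself provides.

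I expect the main obstacle to be the uniformity over the unit sphere of $B_x^n$. The $\ell^1$-equicontinuity in Remark~\ref{Rem: joint continuity} is measured in coefficient norm, not in $\|\cdot\|_x$, so I need compactness of $S_x$ to convert between the two, and that is where both finite dimensionality ($n$ fixed) and local compactness of $\KK$ are used. In the non-Archimedean case the norms take values in a discrete-ish set, but the argument only needs continuity of $N$ and compactness, so it goes through verbatim with $p=\infty,0$.
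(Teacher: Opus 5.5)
Your proof is correct and follows essentially the paper's route: compactness of a finite-dimensional coefficient set, a finite net, and the $\ell^1$-equicontinuity of Remark~\ref{Rem: joint continuity}, which together upgrade pointwise continuity at $x$ to a uniform estimate. The differences are only refinements. You normalise on the $\|\cdot\|_x$-unit sphere $S_x$ instead of the coefficient cube $C$, and you prove the estimate pointwise in $t$ before averaging. This makes explicit several points the paper leaves implicit: the passage to $m_{U,n}$ and $M_{U,n}$, the $p=\infty$ case, and the treatment of zero basis vectors.
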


\begin{proof}
    Let $C=\{(a_i)\in k^n:|a_i|\leq1\}$. By Remark \ref{Rem: joint continuity}, the function 
    $N:C\times M\to k$ given by 
    \[N:((a_i),t)\mapsto \|\sum_{i=1}^na_ib_i(t)\|_t\]
    is continuous.

Consider the equality
    \[\left\|I_{U,n}\left(\sum_{i=1}^n a_ib_i(x)\right)\right\|^p_{\oplus}=\frac{1}{\mu(U)}\int_U\left\|\sum_{i=1}^n a_ib_i(t)\right\|_t^pdt\]
    As $U$ shrinks to $x$, the quantity on the right converges to 
    \[\left\|\sum_{i=1}^n a_ib_i(x)\right\|_x^p\]
    for any fixed $(a_i)$. Since the set $C=\{(a_i)\in k^n:|a_i|\leq1\}$ is compact, one can find, for any $\epsilon_1$, a finite subset $\{(\tilde{a}_i)\}$ of $C$ such that any expression 
    \[\frac{1}{\mu(U)}\int_U\left\|\sum_{i=1}^n a_ib_i(t)\right\|_t^pdt\] 
    is $\epsilon_1$-close to the same expression with one of the finitely many $(\tilde{a}_i)$.  
    
    One can thus conclude that the convergence is uniform in $(a_i)\in C$, which in turn implies that $m_{U,n},M_{U,n}\to 1$ as $U$ shrinks to $x$. 
\end{proof}

\begin{corollary}\label{Cor: approximate isometries}
    For any measurable abstract Banach bundle, there is a measure zero subset $Z\subseteq M$ such that for all $z\in M\setminus Z$, $M_{U,n}$ and $m_{U,n}$ as above converge to 1, so that $I_{U,n}$ approximates an isometry.
\end{corollary}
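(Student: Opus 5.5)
The plan is to replace continuity of the norm functions, which the proof of Theorem \ref{thm: approximate isometries} used, by a Lebesgue differentiation argument, and then upgrade pointwise convergence to convergence uniform in the coefficients via equicontinuity. Fix $n$. For each finitely supported coefficient vector $a=(a_i)$ with entries in the countable dense set $Q$, the function $N_a(t):=\|\sum_{i=1}^n a_ib_i(t)\|_t$ is measurable by condition \ref{Condition 4} of Definition \ref{Def: Objects}. Since $|N_a(t)|\le \sum_i|a_i|$, it is bounded, so $N_a^p$ is locally integrable. I would then invoke the Lebesgue differentiation theorem for the Radon measure $\mu$ on $M$. This requires a differentiation basis for which it holds, such as balls in a doubling metric or Vitali setting; I would state this as a standing assumption on how "$U$ shrinks to $x$". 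The theorem gives a null set $Z_a$ off which
\[\frac{1}{\mu(U)}\int_U N_a(t)^p\,dt\longrightarrow N_a(x)^p.\]
Let $Z$ be the union of the $Z_a$ over all $n$ and all $a\in Q^n$. This is a countable union, so $\mu(Z)=0$.

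Next, fix $z\notin Z$ and pass from $Q$-coefficients to the whole compact set $C=\{a\in\KK^n:|a_i|\le 1\}$. The estimate in Remark \ref{Rem: joint continuity} gives $|N_a(t)-N_{a'}(t)|\le\sum_i|a_i-a_i'|$ for every $t$, since each $b_i(t)$ has norm at most $1$. Hence the maps
\[a\mapsto \Big(\tfrac{1}{\mu(U)}\textstyle\int_U N_a^p\Big)^{1/p} \quad\text{and}\quad a\mapsto N_a(z)\]
are all $1$-Lipschitz in the $\ell^1$-norm, uniformly in $U$; for the first map this follows from Minkowski's inequality for the normalised $L^p$ norm. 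Choose a finite $\epsilon$-net of $C$ inside $Q^n$, which is possible by density and compactness. Pointwise convergence on the net together with the uniform Lipschitz bound gives uniform convergence on $C$, exactly as in the proof of Theorem \ref{thm: approximate isometries}.

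Finally, convert this to the ratio bounds. The quantity $m_{U,n}$ is the infimum, and $M_{U,n}$ the supremum, of $\|I_{U,n}v\|_\oplus/\|v\|_z$ over nonzero $v\in B_z^n$. The denominator $N_a(z)$ is bounded below by some $c>0$ on $\{a: \max|a_i|=1\}$, because the $b_i(z)$ are linearly independent and this set is compact. So uniform convergence of the numerator to $N_a(z)$ on that set forces both ratios to tend to $1$. There are two edge cases to handle. Indices with $b_i(z)=0$ are discarded, and the zero sets $Z_i$ only alter finitely many coordinates near $z$; I would add to $Z$ the points of $Z_i$ that are not density points of $Z_i$. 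For $p=\infty$ the averaged integral is replaced by an essential supremum, so I would use approximate continuity (essential limit) in place of differentiation.

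The main obstacle is the differentiation step. It requires a Vitali-type covering property for $\mu$ and a restriction on the shape of the shrinking $U$, neither of which the paper has stated explicitly. I would try first to work with metric balls under a doubling or Besicovitch hypothesis on $M$.
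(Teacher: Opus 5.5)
\textbf{Your $p=\infty$ step fails, and it cannot be repaired, because the statement is false there.} Approximate continuity of $N_a$ at $z$ only controls $N_a$ off a set of density zero at $z$. By contrast, $\|I_{U,n}v\|_{\oplus,\infty}=\esssup_{t\in U}N_a(t)$ sees every positive-measure subset of $U$, however thin.

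Here is a counterexample. Let $M=[0,1]$ with Lebesgue measure, let $F\subseteq M$ be open and dense with $\mu(F)\le\tfrac12$, and let $B_t=\KK^2$ with $b_1(t)=e_1$, $b_2(t)=e_2$ and $b_i(t)=0$ for $i>2$. Norm $B_t$ by $\ell^1$ on $F$ and by $\ell^\infty$ off $F$. The norm functions are measurable, so this is a measurable abstract Banach bundle.

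For $z\notin F$ and $v=b_1(z)+b_2(z)$ we have $\|v\|_z=1$. But every open $U\ni z$ meets $F$ in a nonempty open set, so $\|I_{U,2}v\|_{\oplus,\infty}=2$ and $M_{U,2}\ge 2$ for \emph{every} neighbourhood $U$. This happens on all of $M\setminus F$, a set of measure at least $\tfrac12$. Almost every point there is a density point of $M\setminus F$, hence a point of approximate continuity of every $N_a$.

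So your proof, and the corollary, must be restricted to $p\in[1,\infty)$. This also excludes the non-Archimedean case. An ultrametric variant fails the same way: use $\max(|a_1|,|a_2|)$ on $F$, $\max(|a_1+a_2|,|\varpi|\max(|a_1|,|a_2|))$ off $F$ for some $0<|\varpi|<1$, and $v=b_1(z)-b_2(z)$. The paper's Lusin argument breaks here for the same reason: an essential supremum over $U$ still sees the excised set.

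\textbf{For $p<\infty$ your argument is correct and complete}, given your hypothesis on how $U$ shrinks. That hypothesis is genuinely needed, not just a convenience. In the same example, for $z\notin F$ let $U_r=(z-\delta_r,z+\delta_r)\cup J_r$, where $J_r\subseteq F\cap(z-r,z+r)$ is an open interval and $\delta_r=r|J_r|$. These sets form a neighbourhood basis of $z$, yet $\mu(U_r)^{-1}\int_{U_r}\|e_1+e_2\|_t^p\,dt\ge 2^p/(1+2r)$, so $M_{U_r,2}$ does not tend to $1$.

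Your route has three steps: Lebesgue differentiation for the countably many bounded functions $N_a^p$ with $a\in Q^{<\omega}$; the $U$-uniform $1$-Lipschitz bound together with a finite net; and a positive lower bound for $N_a(z)$ on $\{\max_i|a_i|=1\}$ over the indices with $b_i(z)\neq0$. The paper proceeds differently. It applies Lusin's theorem to make all rational norm functions continuous off a set of measure at most $\epsilon$, then quotes Theorem \ref{thm: approximate isometries} for the restricted bundle.

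That route implicitly needs the same input as yours. The averages over $U$ still include the excised set, so one must take $z$ to be a density point of the good set. One then needs a union over $\epsilon=1/k$ to obtain a genuinely null exceptional set. Your version is more direct and makes the required differentiation hypothesis explicit.
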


\begin{proof}
    This is essentially a consequence of Lusin's theorem, which states that given any measurable function $f$ on a Borel space $X$ with finite measure, and given any $\epsilon$, there is a subset $Z$ of measure at most $\epsilon$ such that $f|_{X\setminus Z}$ is continuous. 

    The point is that we can take a countable dense subset of $C^n$, (namely, $Q^n\cap C^n$), put some order on the corresponding norm functions of these $v_n(x)\in Q^n\cap C^n$, and apply Lusin's theorem to obtain a subset $Z=\bigcup_n Z_n$ of measure at most $\epsilon=\sum_{n=1}^{\infty} \epsilon/2^n$ away from which all norm functions $\|v_n(x)\|_x$ are continuous. Because all rational norm functions are continuous, one can extend to all norm functions to obtain that $(B_x)_{M\setminus Z}$ is a continuous abstract Banach bundle, and so by the previous theorem, we obtain that for all $z\in M\setminus Z$, the relevant convergence holds.  
\end{proof}

\subsection{Duality}

Let $(B,\{b_i\})$ be a Banach space with a Markushevich basis $(b_i,b_i^*)$.
We let $B^\vee$ be the dual of $B$; that is, the Banach space of bounded functionals. Consider the subspace $B^*\subseteq B^\vee$ defined to be the subset of $B^\vee$ that is the \emph{Norm} closure of $\Span\{b_i^*\}$ (rather than the weaker weak-$^*$ closure). 

Because the set $\{b_i\}_{i\in \N}$ is the set of biorthonormal functions of $\{b_i^*\}_{i\in \N}$, and their algebraic span is norm dense inside $B\subseteq (B^*)^\vee\subseteq B^{\vee\vee}$, Goldstine's theorem guarantees that $\{b_i\}$ separates the points of $B^*$, so that $(b_i^*,b_i)_{i\in \N}$ is a Markushevich basis of $B^*$. 

\begin{definition}
    We call the pair $(B^*,\{b^*_i\}_{i\in \omega})$ the \emph{shrinking dual} of $(B,\{b_i\}_{i\in \omega})$. 
\end{definition}

Note that, by definition, an M-basis $\{b_i\}$ of $B$ is shrinking if and only if $B^*=B^\vee$. Chief examples include the standard bases on $\ell^p$ for $ p\in (1,\infty)$, (not $\ell^1$!) and the space of sequences $c_0$ that converge to $0$.
On the other hand, $\ell^1$ has no shrinking basis. With the standard basis, we have $(\ell^1)^\vee=\ell^\infty$ and $(\ell^1)^*=c_0$. 

Because $B^{**}$ and $B$ are both identified as the norm closure of $\Span\{b_i\}$, it is easy to see that the shrinking double dual is idempotent, that is $(B^{**},\{b_i\})=(B,\{b_i\})$.

\begin{lemma}{\label{lem: Norm convergence}}
Suppose $M$ has finite measure.
    Let $(B_x)\in\cat{bBan}^M$ and let $(v_x)\in \int^{\oplus,p}_MB_x$. Then there is a sequence of measurable sets $(U_k)$, a double sequence of integers $(s^n_k)_{n,k\in \N}$,  and a sequence $(v_n(x))\in \int^{\oplus,p}_MB_x$ such that: 
    \begin{itemize}
    \item $U_{k+1}\subseteq U_{k}$ for all $k\in \N$,
    \item The measure $\mu(U_k)\leq 1/10^k$
        \item the restriction of $v_n(x)$ to $U_{k-1}\setminus U_k$ lies inside
        \[\int^{\oplus,p}_{U^n_k}B^{s^n_k}\]
        \item $(v_n(x))$ converges to $(v_x)$ in norm.
    \end{itemize}
    In particular, for each $x\in M$ the vector $v_n(x)\in \Span\{b_i(x)\}$ is in the algebraic span of the $\{b_i(x)\}$. 
\end{lemma}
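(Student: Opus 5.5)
The plan is to combine Lemma \ref{Lem: Rapid convergence} with Egorov's theorem, and then use dominated convergence in $L^p$. The statement is somewhat garbled: $U^n_k$ should be read as the annulus $U_{k-1}\setminus U_k$, and $B^{s}$ as the finite-dimensional family $(B^s_x)=(\Span\{b_1(x),\dots,b_s(x)\})$. So the goal is to approximate $(v_x)$ in $\|\cdot\|_{\oplus,p}$ by step sections which, on each annulus $U_{k-1}\setminus U_k$, take values in a fixed finite truncation $B^{s^n_k}_x$.

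\textbf{Step 1 (pointwise approximants).} By Lemma \ref{Lem: Rapid convergence}, fix step sections $f_s(x)\in B^s_x$ with $f_s(x)\to v_x$ for almost every $x$. We may choose them with
\[
\|f_s(x)-v_x\|_x\le 2\operatorname{Dist}(v_x,B^s_x)+\epsilon_s .
\]
Since $0\in B^s_x$, this gives $\|f_s(x)\|_x\le 3\|v_x\|_x+\epsilon_s$. As $\mu(M)<\infty$, the right-hand side lies in $L^p$. The sections $f_s$ are measurable by construction.

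\textbf{Step 2 (the sets $U_k$ via Egorov).} Put $U_0=M$. Egorov's theorem on the finite measure space $M$ gives, for each $k$, a set $E_k$ with $\mu(E_k)\le 10^{-k}$ off which $f_s\to v$ uniformly. Set $U_k:=\bigcup_{j\ge k}E_j$ and rescale the $\epsilon$'s so that $\mu(U_k)\le 10^{-k}$. These sets are decreasing and have the required measure bound, and uniform convergence holds on each $M\setminus U_k\supseteq U_{k-1}\setminus U_k$.

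\textbf{Step 3 (choosing $s^n_k$).} For each $n$ and $k\ge1$, use uniform convergence on $U_{k-1}\setminus U_k$ to pick $s^n_k$ with
\[
\sup_{x\in U_{k-1}\setminus U_k}\|f_{s^n_k}(x)-v_x\|_x<1/n .
\]
Define $v_n(x)$ as follows:
\begin{itemize}
\item on $M\setminus U_k$ with $k\le n$ (that is, on the annuli $U_{j-1}\setminus U_j$ for $j\le n$), set $v_n(x):=f_{s^n_j}(x)$;
\item on $U_n$, set $v_n(x):=f_{s^n_{n+1}}(x)$, or simply $0$.
\end{itemize}
Each $v_n$ takes values in the algebraic span of the $b_i(x)$, and on each annulus it lies in $\int^{\oplus,p}B^{s^n_k}$.

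\textbf{Step 4 (norm convergence).} Split the norm into the part on $M\setminus U_n$ and the part on $U_n$:
\[
\|v_n-v\|_p^p\le n^{-p}\mu(M)+\int_{U_n}\|v_n(x)-v_x\|_x^p\,dx .
\]
The first term tends to $0$. For the second, if $v_n=0$ on $U_n$ it equals $\int_{U_n}\|v_x\|^p$. This tends to $0$ by absolute continuity of the integral, since $\mu(U_n)\to0$ and $\|v_x\|^p\in L^1$. If instead $v_n=f_{s^n_{n+1}}$ on $U_n$, it is bounded by the integral over $U_n$ of the dominating function $(4\|v_x\|+\epsilon)^p$, which again tends to $0$.

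\textbf{Main obstacle.} The delicate point is measurability: we need the selections $f_s$ to be measurable step sections with the near-optimal bound used in Step 1. This requires the measurability of $\operatorname{Dist}(v_x,B^s_x)$ from the earlier lemma, and a countable-image selection over $S_Q$. The rest is bookkeeping of indices.
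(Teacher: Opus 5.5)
Your argument is correct for $p\in[1,\infty)$, but it handles the tail differently from the paper.

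\textbf{The paper's route.} The paper fills in \emph{every} annulus. For each $n$ it sets $v_n=f_{s^n_k}$ on $U_{k-1}\setminus U_k$ for all $k$, and $v_n=0$ on the null set $\bigcap_k U_k$. This gives $\|v_n(x)-v_x\|_x\le 2^{-n}$ for almost every $x$. Norm convergence then follows from $\mu(M)<\infty$ alone, with rate $\mu(M)^{1/p}2^{-n}$. The same bound works verbatim for $p=\infty$ and $p=0$. (The paper also reruns Lemma \ref{Lem: Rapid convergence} and Egorov inductively on $U_{k-1}$ to get nested sets, where you use one sequence and the tails $\bigcup_{j\ge k}E_j$; that difference is cosmetic.)

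\textbf{Your route.} You use only the first $n$ annuli, set $v_n=0$ on $U_n$, and pay for the tail with absolute continuity of $\int\|v_x\|_x^p\,dx$. This buys something: each $v_n$ then takes values in the single finite truncation $B^{S_n}_x$ with $S_n=\max_{j\le n}s^n_j$. In the paper's construction, by contrast, the levels $s^n_k$ may be unbounded in $k$, so $v_n$ need not lie in any fixed finite-rank sub-bundle.

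\textbf{The cost.} Absolute continuity has no analogue for the essential supremum. Your Step 4 therefore says nothing when $p=\infty$ (or $p=0$), where $\esssup_{x\in U_n}\|v_x\|_x$ need not tend to $0$. If you want those cases, fill the remaining annuli with $f_{s^n_k}$ as the paper does.

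\textbf{A simplification.} With the choice $v_n=0$ on $U_n$, the quantitative bound $2\operatorname{Dist}(v_x,B^s_x)+\epsilon_s$ from Step 1 is never used. Plain a.e.\ convergence from Lemma \ref{Lem: Rapid convergence} suffices, so the ``main obstacle'' you flag largely disappears.
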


\begin{proof}
    By Lemma \ref{Lem: Rapid convergence}, we know there is a sequence $f^1_n(x)$ which converges to $v_x$ pointwise. By Egorov's theorem, there is a subset $U_1\subseteq M$ on which the convergence of $\|f^1_n(x)-v_x\|_x$ to $0$ is uniform and whose measure is $\mu(M\setminus U_1)\leq 1/10$. 
    Let $(s^n_1)$ be an increasing sequence of integers such that
\[\|f^1_{s^n_1}(x)-v_x\|_x\leq 1/2^n\]
for all $x\in M\setminus U_1$ and all $n\in \N$. 

By induction, construct $f^k_n(x)\in \Span\{b_i(x)\}$ defined on $U_{k-1}$, which converges uniformly outside some set $U_k$ of measure $\mu(U_k)\leq 1/10^k$.  Let $(s^n_k)$ be an increasing sequence of integers such that
\[\|f^k_{s^n_k}(x)-v_x\|_x\leq 1/2^n\]
for all $x\in U_{k-1}\setminus U_k$. 

It is clear that $\bigcap_k U_k$ has measure zero, and that the family $v_n(x)$ defined by 
\[v_n(x):=\begin{cases}
    f^k_{s^n_k}(x) &\text{if } x\in U_{k-1}\setminus U_k \\
    0 &\text{if } x\in \bigcap_k U_k
\end{cases} \] converges almost everywhere and uniformly to $v_x$. Thus, one finds
\[\|v_x-v_n(x)\|_{\oplus} \leq \mu(M)/2^n\]
for each $n$ and thus that $v_n(x)$ converges to zero. 
\end{proof}

\begin{prop}{\label{Prop: Direct integral duals}}  Assume the base field $\KK$ is Archimedean. 
Let $p\in [1,\infty]$ and $q=\frac{p}{p-1}$ be the H\"older dual. 

Let $(B_x,\{b_i(x)\}_{i\in \N})_{x\in M}$ be an abstract Banach bundle in $\cat{bBan}^M$ and let $(B_x^*,\{b^*_i(x)\}_{i\in \N})_{x\in M}$ be its shrinking dual.
\begin{enumerate}
      \item There is a pairing
    \begin{align*}
        \langle\cdot,\cdot\rangle:\int^{\oplus,p}_MB_x\times \int^{\oplus,q}_MB_x^*&\to k \\
        (v_m),(\phi_m)&\mapsto \int_M\phi_m(v_m)dm
    \end{align*}
    which induces an isometric linear map
    \[D:\int^{\oplus,q}_MB_x^*\to \left(\int^{\oplus,p}_MB_x\right)^\vee.\]

    \item Assume, in addition, that:
    \begin{enumerate}
        \item $M$ is $\sigma$-finite;
        \item $p\neq \infty$ (so that the base field $\KK$ is Archimedean);
        \item almost all $(B_x,\{b_i(x)\})$ are shrinking.
    \end{enumerate}
    Then $D$ is an isomorphism. 
\end{enumerate}  
\end{prop}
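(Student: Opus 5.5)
For part (1), I first check that the pairing is well defined. For $(v_m)\in\int^{\oplus,p}_MB_x$ and $(\phi_m)\in\int^{\oplus,q}_MB_x^*$, the function $m\mapsto\phi_m(v_m)$ is measurable. To see this, approximate both families by step families in the algebraic spans, using Lemma \ref{Lem: Rapid convergence} for $(v_m)$ and its analogue for the shrinking dual bundle $(B^*_x,\{b^*_i(x)\})$. On such approximants the pairing reduces to finite sums of products of coefficients, and those are measurable; passing to pointwise limits gives measurability. Since $|\phi_m(v_m)|\le\|\phi_m\|_{B_x^*}\|v_m\|_{B_x}$ and the norm on $B_x^*$ is the restriction of the dual norm, H\"older's inequality gives $|\langle v,\phi\rangle|\le\|v\|_p\|\phi\|_q$. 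Hence $D$ is bounded with $\|D\phi\|\le\|\phi\|_q$.

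For the reverse inequality $\|D\phi\|\ge\|\phi\|_q$, I norm $\phi$ by a measurable selection. For a.e.\ $m$, $\|\phi_m\|=\sup_{v\in S_Q,\|v\|_m\le1}|\phi_m(v)|$, and this is a countable supremum of measurable functions. Given $\epsilon>0$, choose measurably (enumerate $S_Q$ and take the first index that works) a unit vector $u_m\in S_Q$ with $|\phi_m(u_m)|\ge(1-\epsilon)\|\phi_m\|$. This produces a step family, hence a measurable section. Rotating by a unimodular scalar, assume $\phi_m(u_m)\ge0$. For $1<q<\infty$, set $v_m=\|\phi_m\|^{q-1}u_m$, truncated to a finite-measure set where $\|\phi_m\|$ is bounded, and use the standard H\"older-equality computation to get $\langle v,\phi\rangle\ge(1-\epsilon)\|\phi\|_q\|v\|_p$. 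For $q=\infty$ ($p=1$), take $v=\mathds1_A u/\mu(A)$ with $A$ a positive-finite-measure set on which $\|\phi_m\|>\|\phi\|_\infty-\epsilon$. For $q=1$ ($p=\infty$), take $v_m=u_m$ directly. Letting $\epsilon\to0$ shows $D$ is isometric.

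For part (2), take $\Lambda\in(\int^{\oplus,p}_MB_x)^\vee$. By $\sigma$-finiteness, I reduce to $\mu(M)<\infty$ by exhausting $M$ with an increasing sequence of finite-measure sets and gluing, where compatibility follows from injectivity of $D$. For each $i$, the map $A\mapsto\Lambda(\mathds1_Ab_i)$ is a countably additive measure, countable additivity coming from $p<\infty$ and dominated convergence. It is absolutely continuous with respect to $\mu$, so Radon--Nikodym gives densities $g_i\in L^1(M)$. For a.e.\ $m$, define $\phi_m$ on $\Span\{b_i(m)\}$ by $b_i(m)\mapsto g_i(m)$. The main obstacle is to show that $\phi_m$ extends to a bounded functional on $B_m$ with $\|\phi_m\|\in L^q$. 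The plan is to test $\Lambda$ against $Q$-rational step families $\mathds1_A\sum a_ib_i$, with $(a_i)\in Q^{<\omega}$ ranging over a countable set. This gives $|\int_A\sum a_ig_i\,d\mu|\le\|\Lambda\|\,\|\mathds1_A\sum a_ib_i\|_p$. Lebesgue differentiation along the countable base, or alternatively a duality argument with the $L^q$ function $h(m)=\sup_{(a_i)}|\sum a_ig_i(m)|/\|\sum a_ib_i(m)\|_m$ tested against selections as in part (1), then yields $h\in L^q$ with $\|h\|_q\le\|\Lambda\|$. So $\phi_m\in B_m^\vee$ a.e., and by the shrinking hypothesis (c), $B_m^\vee=B_m^*$, so $(\phi_m)$ is a measurable section of the shrinking dual bundle, since its coefficients $\phi_m(b_i(m))=g_i(m)$ are measurable.

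Finally, $D\phi$ and $\Lambda$ agree on all step families $\mathds1_A\sum a_ib_i$ by construction. These are dense in $\int^{\oplus,p}_MB_x$ by Lemma \ref{lem: Norm convergence}, so $D\phi=\Lambda$ and $D$ is surjective. I expect the pointwise boundedness and $L^q$ estimate for $\phi_m$ to be the delicate step, since it must handle the supremum over infinitely many coefficient vectors $(a_i)$ measurably. Restricting to $Q^{<\omega}$ and using the monotone convergence theorem is what should make it work.
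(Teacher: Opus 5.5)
Your proposal is correct. In its key step it takes a different, and sounder, route than the paper.

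\textbf{How the paper argues.} The paper applies Radon--Nikodym to $U\mapsto\lambda(v\mathds{1}_U)$ for every $v\in\KK^{<\omega}$. It identifies $\lambda_x(v)$ as the limit of $\lambda(v\mathds{1}_U)/\mu(U)$. It then invokes Corollary \ref{Cor: approximate isometries} to claim the pointwise bound $\|\lambda_x\|\le\|\lambda\|$, and reads off the $L^q$ estimate from that.

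\textbf{How you argue.} You take Radon--Nikodym densities $g_i$ only for the countably many $b_i$ and define $\phi_m$ by linearity. This sidesteps the question of a.e.\ linearity in uncountably many $v$. You then obtain $\|\phi_m\|=h(m)$ with $\|h\|_q\le\|\Lambda\|$ by testing $\Lambda$ against weighted norming selections $h_N^{q-1}u\mathds{1}_E$ and using monotone convergence. In part 1, your weight $\|\phi_m\|^{q-1}$ likewise supplies what the paper's sketch leaves out: its unit norming vectors alone only handle $q=1$.

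\textbf{What each route buys.} The paper's route is shorter and reuses Section 4.2, but its pointwise estimate is valid only for $p=1$.
\begin{itemize}
\item Since $\|v\mathds{1}_U\|_p\approx\mu(U)^{1/p}\|v\|_x$, the quotient $\|v\mathds{1}_U\|_p/\mu(U)$ blows up as $U$ shrinks when $p>1$.
\item Indeed $\|\lambda_x\|$ can be unbounded: take $B_x=\KK$ and $\lambda(f)=\int fg$ with $g\in L^2\setminus L^\infty$.
\item Even a correct pointwise bound would only give $\int\|\lambda_x\|^q\le\|\lambda\|^q\mu(M)$.
\end{itemize}
Your duality argument gives the right estimate for all $p\in[1,\infty)$. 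For the same reason, drop the ``Lebesgue differentiation along the countable base'' alternative you mention. It inherits the paper's defect, works only for $p=1$, and the duality route is the one to commit to.

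\textbf{Details to make explicit when writing it up.}
\begin{itemize}
\item The duality test needs $\Lambda(\sum_i c_ib_i)=\int_M\sum_i c_ig_i\,d\mu$ for measurable coefficient functions $c_i$, not just indicators. This holds because $c\mapsto\Lambda(cb_i)$ is bounded on $L^p(M)$, as $\|cb_i\|_p=\|c\mathds{1}_{M\setminus Z_i}\|_{L^p}$. So scalar $L^p$--$L^q$ duality gives $g_i\in L^q$, not merely $L^1$.
\item The normalised coefficients $a_i/\|\sum_ja_jb_j(m)\|_m$ of your selections are bounded only by $\|b_i^*(m)\|$, which may be unbounded in $m$. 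Truncate to sets where these are bounded before exhausting $M$.
\item In part 1 with $p=1$, you need a positive finite-measure subset of $\{m:\|\phi_m\|>\|\phi\|_\infty-\epsilon\}$. This requires $\mu$ to be semifinite, which is automatic for a Radon measure on a second countable space.
\end{itemize}
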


\begin{proof}[Proof of part 1.]
    We first want to show that $D$ is an isometry, which follows from classical ideas by showing that $\|(\phi_x)\|\leq \|D(\phi_x)\|\leq \|(\phi_x)\|$ for all families $(\phi_x)$. The second inequality follows from the classical H\"older-inequality:
    \begin{align*}
        \|D(\phi_x)\|_{\int^{p,\vee}}& =\sup_{\|(v_x)\|_p\leq 1}\left|\int_M \phi_xv_xdx\right| \\
        &\leq \sup_{\|(v_x)\|_p\leq 1}\int_M |\phi_xv_x|dx \\
        &\leq \sup_{\|(v_x)\|_p\leq 1}\int_M \|\phi_x\|_{B_x^\vee}\|v_x\|_{B_x}dx \\
        &\leq \sup_{\|(v_x)\|_p\leq 1}\left(\int_M \|\phi_x\|^qdx\right)^{\frac{1}{q}}\left(\int_M \|v_x\|^pdx\right)^{\frac{1}{p}} \\
        &=\|(\phi_x)\|_{\int^q}
    \end{align*}
    As for the first inequality, we merely have to find a measurable family $(v_m)$ (dependent on $\epsilon$) with $\|v_m\|=1$ such that $|\phi_mv_m|>\|\phi_m\|-\epsilon$. Indeed, this can be done with a step family by taking some close approximation of $\phi_m$.
\end{proof}

\begin{proof}[Proof of part 2.]
It remains only to show that $D$ is surjective under the additional hypotheses.

Let $\lambda:\int^{\oplus,p}_MB_x\to k$ be a linear functional. 
Assume for now that $M$ has finite measure. For each $v\in k^{<\omega}$, define an complex valued association from measurable sets on $M$ by
\[\mu_v:U\mapsto \lambda(v\mathds{1}_U)\]
where $v\mathds{1}_U$ denotes the step family equal to $v$ for $x\in U$ and $0$ otherwise. 

This association is in fact a complex valued measure. It is countably additive because if $U:=\coprod_iU_i\subseteq M$, then
\begin{align*}\sum_{i=1}^{\infty}\mu_v(U_i)&=\lim_{N\to \infty}\sum_{i=1}^{N}\lambda(v\mathds{1}_{U_i})\\
&=\lim_{N\to \infty}\lambda(v\mathds{1}_{\coprod_{i=1}^NU_i})\\
&=\mu_v(U)
\end{align*}
The above limit evaluates to $\lambda(v\mathds{1}_U)$ because $\lambda$ is continuous and $\sum_{u=1}^{N}\mathds{1}_{U_i}$ converges to $\mathds{1}_U$ in the $L^p$ norm (here, $p\neq \infty$ is important) and because $M$ has finite measure. 

By the Radon-Nikodym theorem, there is complex function $\lambda_x(v):M\to \C$ such that
\begin{equation}{\label{Eq: Radon-Nikodym}}
    \lambda(v\mathds{1}_U)=\int_U\lambda_x(v)d\mu(x).
\end{equation}

As an aside, we can now extend the existence of $\lambda_x(v)$ to the case $M$ is a $\sigma$-finite measure space, by dividing $M$ into countably many disjoint finite measure subsets, and applying the Radon--Nikodym theorem to each subset. 

The family
\[\lambda_x(v)=\varinjlim_{U\ni x} \frac{\lambda(v\mathds{1}_U)}{\mu(U)}\]
can be expressed as a limit over neighbourhoods of $x$ (at least, for almost all $x\in M$), it follows that the association $v\mapsto \lambda_x(v)$ is linear in $v$. 
Furthermore, we can demonstrate that $\lambda_x$ is continuous with respect to the topology on $B_x^\vee$ through the following calculation
\begin{align*}
        |\lambda_x(v)|&=\varinjlim_{U\ni x}\frac{1}{\mu(U)}|\lambda(v\mathds{1}_U)| \\
        &\leq \|\lambda\| \limsup_{U\ni x} \frac{\|v\mathds{1}_U\|}{\mu(U)} =\|\lambda\|\|v\|_x
    \end{align*}
    where the last equality arises because the inclusion $B_x\to \int_U^{\oplus,p}B_x$ approximates an isometry by Corollary \ref{Cor: approximate isometries}. Thus, $\lambda_x$ can be uniquely extended to an element of $B_x^\vee$.

    We first claim that the family $(\lambda_x)_{x\in M}$ is Bochner measurable, and lies in $\int^{\oplus,q}_MB_x^*$. 

    Since each function $x\mapsto \lambda_x(v)$ is measurable, it is clear that $\lambda_x$ is a weakly measurable family, so an application of Theorem \ref{Thm: Separable measurablity} shows that $(\lambda_x)$ is a Bochner measurable family of $(B_x^*)$. Also, since $\|\lambda_x\|_{B^*_x}\leq \|\lambda\|$, it is clear that 
    \[\int_M\|\lambda_x\|_{B_x^*}^qd\mu(x)\leq \|\lambda\|^q\]
so that $\lambda_x\in \int^{\oplus,q}_MB_x^*$.

Thus, we have a family of functionals $\lambda_x$, and by Equation (\ref{Eq: Radon-Nikodym}), we can demonstrate that, for any step family $(v_x)=\sum_{i=1}^{\infty}v_i\mathds{1}_{U_i}\in \int^{\oplus,p}_MB_x$,
\begin{align*}
  D(\lambda_x)(v_x)&=\int_M\lambda_x(v_x)d\mu \\
   &=\sum_i\int_{U_i}\lambda_x(v_x)d\mu \\
   &=\sum_i\lambda(v_i\mathds{1}_{U_i})\\
   &=\lambda (v_x).
\end{align*}
Thus, since $\lambda$ and $D(\lambda_x)$ agree on a dense subset of $\int^{\oplus,p}_MB_x$, we conclude that $\lambda=D(\lambda_x)$. 

\end{proof}

\subsection{Integral Kernel representations of bounded operators}
In this section, we assume the base field $\KK$ is Archimedean. 
Here, we want to prove a result that amounts to the existence of integral kernels. In our application, we want to be able to view morphisms $T:\int^{\oplus,p}_MX_m\to \int^{\oplus,q}_NY_n$ between direct integral spaces as an integral against a family of morphisms $k_T(m,n):X_m\to Y_n$, such as in Equation \ref{Eq: integral kernel}. However, one must be careful. 

    If one allowed a more general notion of direct integral than we have defined in this paper, then there may well be no local representation of such a bounded operator $T$. 
    
    In a simple example, let $C^1(-1,1)$ be the space of continuously differentiable functions, which can be expressed as some kind of direct integral $\int^\oplus_{(-1,1)}\C$, by taking all measurable sections on which the norm $f\mapsto \|f\|_\infty+\|f'\|_{\infty}$ is well defined. 
    The Schwartz kernel theorem guarantees that any operator $T:C^1(-1,1)\to \C$ can be expressed as a distributional integral kernel, but a distributional kernel need not be anywhere close to being a function.
    As a counterexample, consider the bounded operator $T:f\mapsto f'(0)$, which can be expressed as an integral
    \[Tf=\int_{-1}^1\delta'(m)f(m)dm\]
    against a distribution. The differential $\delta'(m) dm$ does not take the form $k(m)d\mu(m)$ for any pair with $k(m)$ a function, because if it did, the support of the measure $\mu$ would necessarily be the set $\{0\}$, taking the form of a (scalar multiple of) the counting measure, in which case the integral (\ref{Eq: integral kernel}) takes the form 
    \[\int_{-1}^1k_T(m)f(m)d\mu(m)=k_T(0)f(0)\]
    which visibly does not depend on $f'(0)$. 

\begin{theorem}{\label{Thm: integral kernels}}
    Assume the base field $\KK$ is Archimedean. 
    Suppose $M,N$ are locally compact Polish spaces with Borel measures. 
    Suppose $(X_m)\in \cat{bBan}^M$ and $(Y_n)\in \cat{bBan}^N$
    are abstract Banach bundles such that almost all $Y_n$ are shrinking. Let $p\in [1,\infty)$, $q\in(1,\infty)$, and let $T:\int^{\oplus,p}_MX_m\to \int^{\oplus,q}_NY_n$ be a bounded operator between direct integral spaces. 
    
    Then there exist:
    \begin{enumerate}
        \item a measure $\hat{\mu}_T$ on $M\times N$;
        \item a disintegration $(\hat{\mu}_{T,n}:\Mc(M)\to [0,\infty])_{n\in N}$ of $\mu_f$;
        \item an essentially bounded family $(k_T(m,n)\in \Hom_{\cat{Ban}}(X_m,Y_n))$;
    \end{enumerate}
    such that for any family $(x_m)\in (X_m)$, the operator $T$ is expressible as:
    \begin{equation}{\label{Eq: integral kernel}}
        T:(x_m)_{m\in M}\mapsto \left({\small \int}_M k_T(m,n)x_m d\hat{\mu}_{T,n}(m)\right)_{n\in N}.
    \end{equation}
    
\end{theorem}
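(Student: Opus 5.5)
The plan is to reduce to the duality result, Proposition \ref{Prop: Direct integral duals}, applied on the target side, and then run a Radon--Nikodym argument on $M\times N$, much as in the proof of part 2 of that proposition. Since $q\in(1,\infty)$ and almost all $Y_n$ are shrinking, with $q'=\frac{q}{q-1}$ we have $\int^{\oplus,q'}_NY_n^*\cong\left(\int^{\oplus,q}_NY_n\right)^\vee$ isometrically, after restricting to $\sigma$-finite pieces of $N$ (local compactness plus Polish gives $\sigma$-compactness). Pairing with constant step families on both sides gives scalar set functions
\[\nu_{v,\psi}(U\times V):=\big\langle T(v\mathds{1}_U),\,\psi\mathds{1}_V\big\rangle,\qquad v\in Q^{<\omega},\ \psi\in Q^{<\omega},\]
where $v\mathds{1}_U$ is the step family $\sum_i v_ib_i(m)$ on $U\subseteq M$ and $\psi\mathds{1}_V=\sum_j\psi_jy^*_j(n)$ on $V\subseteq N$. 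First I check that each $\nu_{v,\psi}$ is finitely additive and countably additive separately in $U$ and $V$ on finite-measure rectangles. This uses $p,q'<\infty$ and continuity of $T$, exactly as in the countable-additivity step of Proposition \ref{Prop: Direct integral duals}. I then extend $\nu_{v,\psi}$ to a complex Borel measure on $M\times N$ by the standard bimeasure-to-measure extension, which works because $M\times N$ is locally compact Polish and the variations are controlled by $\|T\|\,\|v\mathds{1}_U\|_p\|\psi\mathds{1}_V\|_{q'}$.

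Next I choose a single dominating measure. Enumerate the countably many pairs $(v,\psi)$ and set $\hat\mu_T:=\sum_k 2^{-k}|\nu_{v_k,\psi_k}|/(1+|\nu_{v_k,\psi_k}|(K_k))$, normalised on an exhaustion by compacts $K_k$ so that it is $\sigma$-finite. Radon--Nikodym then gives densities $\kappa_{v,\psi}(m,n)$ with $d\nu_{v,\psi}=\kappa_{v,\psi}\,d\hat\mu_T$. Since $N$ is Polish, the disintegration theorem (Remark after the definition of $\cat{Disint}$, applied to the projection $M\times N\to N$ with marginal $\pi_{N*}\hat\mu_T$) produces the family $(\hat\mu_{T,n})_{n\in N}$ of measures on $M$. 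As in Proposition \ref{Prop: Direct integral duals}, I define $k_T(m,n)$ on finite spans by $y$-coefficients. For $\hat\mu_T$-almost every $(m,n)$, $(v,\psi)\mapsto\kappa_{v,\psi}(m,n)$ is $Q$-bilinear on the countable set; this holds off a null set because each linear relation holds a.e. and there are countably many. I then set $\psi(k_T(m,n)v):=\kappa_{v,\psi}(m,n)$.

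The \textbf{main obstacle} is the boundedness step: showing that $|\kappa_{v,\psi}(m,n)|\le C\|v\|_{X_m}\|\psi\|_{Y_n^*}$ almost everywhere. This bound is what lets $k_T(m,n)$ extend to an element of $\Hom_{\cat{Ban}}(X_m,Y_n)$, with weak-$^*$ values landing in $Y_n$ by shrinkingness, and what makes the family essentially bounded. My first attempt would be to put the normalisation into $\hat\mu_T$ itself. Concretely, I would take $\hat\mu_T$ to be the supremum (least upper bound measure) of the $|\nu_{v,\psi}|$ over $\|v\|\le1,\|\psi\|\le1$ with respect to the local norms. I would then use the approximate-isometry Corollary \ref{Cor: approximate isometries} on $M$ and on $N$ (the latter applied to $(Y_n^*)$), so that the density at a Lebesgue-type point is computed as a limit of $\nu_{v,\psi}(U\times V)/\hat\mu_T(U\times V)$ over shrinking rectangles, with $\|v\mathds{1}_U\|,\|\psi\mathds{1}_V\|$ comparable to the local norms. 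This freedom to choose $\hat\mu_T$ depending on $T$ is precisely why the statement allows a non-product measure, as the $\delta'$ discussion shows one must. Finally, formula (\ref{Eq: integral kernel}) is verified on step families by unwinding the disintegration, and extended by density (Lemma \ref{lem: Norm convergence}) together with continuity of both sides.
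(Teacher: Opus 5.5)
\textbf{The gap is the step where you extend $\nu_{v,\psi}$ to a complex Borel measure on $M\times N$.} A bimeasure is a set function on rectangles that is countably additive in each variable separately. It extends to a complex measure only if $\sum_i|\nu(U_i\times V_i)|$ stays bounded over finite disjoint families of rectangles in $U\times V$. Your estimate does not give this. Cut $U\times V$ into $k^2$ congruent rectangles: then $\sum_{i,j}\|T\|\,\|v\mathds{1}_{U_i}\|_p\|\psi\mathds{1}_{V_j}\|_{q'}$ grows like $k^{2-1/p-1/q'}$, which is unbounded because $q'<\infty$.

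The extension genuinely fails. Take $M=N=\R$ with Lebesgue measure, $X_m=Y_n=\C$, $p=q=2$, $T$ the Hilbert transform, and $v=\psi=1$. For disjoint intervals, $\nu(U\times V)=\frac1\pi\iint_{U\times V}\frac{dm\,dn}{n-m}$. Any extension would therefore have density $\frac{1}{\pi(n-m)}$ off the diagonal, and this density is not integrable on $[0,1]^2$.

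No cleverer route can repair this, because the same $T$ violates the conclusion of the statement. For almost every $n\in(0,1)$, the formula (\ref{Eq: integral kernel}) at $x=\mathds{1}_{[0,1]}$ requires $\int_{[0,1]}|k_T(m,n)|\,d\hat\mu_{T,n}(m)<\infty$. On the other hand, testing the formula on indicators of rational subintervals of $[0,n)$ forces $k_T(\cdot,n)\,\hat\mu_{T,n}=\frac{dm}{\pi(n-m)}$ on $[0,n)$, which has infinite variation.

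Two later steps of your proposal also fail.
\begin{itemize}
\item Shrinkingness gives $Y_n^\vee=Y_n^*$, so your bounded form on $X_m\times Y_n^*$ yields an operator into $Y_n^{\vee\vee}$, not into $Y_n$. For example, take $M=[0,1]$, $N$ a point, $Y=c_0$, $X_m=\C$, $p=1$, and $Tx=\big(\int_0^1x\,r_j\big)_j$ with $r_j$ the Rademacher functions. The kernel is forced to be pointwise a nonzero multiple of $(r_j(m))_j$, which is not in $c_0$.
\item Your final ``density plus continuity of both sides'' step presumes that the right-hand side of (\ref{Eq: integral kernel}) is continuous on $\int^{\oplus,p}_MX_m$. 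For the Fourier transform on $L^2(\R)$, the measures $k_T(\cdot,n)\,\hat\mu_{T,n}$ are forced to be $(2\pi)^{-1/2}e^{-imn}\,dm$. Then $\int|x(m)|\,dm=\infty$ for $x\in L^2\setminus L^1$, so the right-hand side is not even defined there.
\end{itemize}

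The paper's proof does not supply the missing ingredient either. It applies Radon--Nikodym and disintegration to the total variation $\hat\mu_T$ of $U\times V\mapsto T(\cdot|_U)|_V$. Already for $T=\id$ on $L^2([0,1])$, this variation is infinite on every rectangle with $|U\cap V|>0$: cutting $U\cap V$ into $k$ equal pieces gives $\sum_i\|\mathds{1}_{U_i}\|_2=\sqrt{k\,|U\cap V|}$.

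So both arguments tacitly assume that $T$ is an honest integral operator. The statement needs a hypothesis to that effect: scalar bimeasures of locally bounded variation, absolutely convergent fibre integrals, and either reflexive $Y_n$ or kernels valued in $Y_n^{\vee\vee}$. Under such a hypothesis your scalar-bimeasure route is the more workable one. Moreover, the bound you call the main obstacle becomes automatic. Let $\hat\mu_T$ be the supremum of the normalised measures $|\nu_{v,\psi}|/(\|v\|_{X_m}\|\psi\|_{Y_n^*})$, and suppose it is $\sigma$-finite. Then $|\kappa_{v,\psi}|\le\|v\|_{X_m}\|\psi\|_{Y_n^*}$ holds $\hat\mu_T$-a.e., with no appeal to approximate isometries.
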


\begin{proof}
    Let $T:\int^{\oplus,p}_M X_m\to \int^{\oplus,q}_N Y_n$ be a bounded operator. Identify $\KK^{<\omega}$ with the set $S_k\subseteq \int^{\oplus,p}_M X_m$ of constant families in the finite span. Let $\cdot|_U:\int^{\oplus,p}_M X_m\to\int^{\oplus,p}_UX_m$ denote the natural projection.
    
    Let $\mu_T$ be the $B(k^{<\omega},\int_N^\oplus Y_n)$-valued vector measure defined on the Borel $\sigma$-algebra of $M\times N$ given by
    \[\mu_T(U\times V)=\{x\mapsto T(x|_U)|_V\}\]
    and consider the $[0,\infty]$-valued total variation measure on $M\times N$ given by 
    \[\hat{\mu}_T(U\times V)=\sup_{\substack{\coprod_iU_i\times V_i\subseteq U\times V \\ x_i\in C(U_i)} } \sum_i \|T(x_i)|_{V_i}  \| \]
    where we assume $\bigcup_i U_i$ is a disjoint finite union of measurable subsets. 
    This measure, by the disintegration theorem, exhibits a disintegration $(\hat{\mu}_n)_{n\in N}$ (defined almost everywhere) along the fibres $M_n:=M\times \{n\}\subseteq M\times N $ such that 

    \[\hat{\mu}_T(S)=\int_N \hat{\mu}_n(S\cap M_n)d\nu_N(n). \]

Now, $\mu_T$ is absolutely continuous with respect to $\hat{\mu}_T$, so we want to find the Radon-Nikodym derivative

\[\frac{d\mu_T}{d\hat{\mu}_T}(m,n):=\lim_{\substack{\hat{\mu}_T(Z_i)>0 \\\bigcap_i{Z_i}=\{(m,n)\}}}\mu_T(Z_i)/\hat{\mu}_T(Z_i)\]
where we take the limit in the weak${}^*$ topology.

\begin{claim}
    The Radon-Nikodym derivative $\frac{d\mu_T}{d\hat{\mu}_T}(m,n)$ exists almost everywhere, and we have 
    \[\frac{d\mu_T}{d\hat{\mu}_T}(m,n)\in \Hom(X_m,Y_n)  \textrm{ and } \left\|\frac{d\mu_T}{d\hat{\mu}_T}(m,n)\right\|\leq 1\]
    for $\hat{\mu}_T$- almost all $(m,n)$. 
\end{claim}
\begin{adjustwidth}{1cm}{1cm}
    \begin{proof}
        Since $\mu_T$ is a $B(k^{<\omega},\int^\oplus_N Y_n)$-valued measure, each $\alpha\in k^{<\omega}$ gives rise to a $\int^{\oplus,q}_N Y_n$-valued measure $\mu_{T,\alpha}=\mu_T(\alpha)$, which defines a linear functional on $(\int^{\oplus,q}_VY_n)^\vee=\int^{\oplus,q^*}_V Y_n^*$ for any $V\subseteq N$ by setting
        $$\chi=\sum v_i  1_{A_i}\mapsto \int_{M\times N}\chi d\mu_T:=\sum v_i \mu_T(A_i) $$
        on step functions and extending by continuity. 
        By Proposition \ref{Prop: Direct integral duals}, there is a function $h_{\alpha}\in (\int^{\oplus,q^*}_VY_n^*)^\vee=\int^{\oplus,q}_V Y_n$ such that 
        \[\mu_{T,\alpha}(A)=\int_{M\times N} 1_{A} \  h_\alpha d\widehat{\mu_{T,\alpha}}\]
    where $\widehat{\mu_{T,\alpha}}$ is the total variation of $\mu_{T,\alpha}$. 
    What's more, if we take measurable subsets $V\subseteq V'$, we see that $h_{V,\alpha}(m,n)=h_{V',\alpha}|_{M\times V}(m,n)$, so that $h_{V,\alpha}$ is independent of $V$, and we can take $h_\alpha(m,n)\in Y_n$, independent of $V$.

    Now, $\widehat{\mu_{T,\alpha}}$ is a $[0,\infty]$-valued measure that is absolutely continuous with respect to $\hat{\mu}_T$, because 
    \begin{align}
        \hat{\mu}_T(A)=0&\implies \mu_T(S)=0\ \forall S\subseteq A \\
        &\implies \mu_T(S)(x)=0\ \forall S\subseteq A \\
        &\implies \widehat{\mu_{T,\alpha}}(A)=0.
    \end{align}
    Thus, we can find the usual Radon-Nikodym derivative $\frac{d\widehat{\mu_{T,\alpha}}}{d\hat{\mu}_T}$, 
    and set $h_{1,\alpha}=h_\alpha\frac{d\widehat{\mu_{T,\alpha}}}{d\hat{\mu}_T}\in \int^{\oplus,q}_NY_n$ so that
    \[d\mu_{T,\alpha}=h_{1,\alpha}d\hat{\mu}_T.\]
    These expressions are linear in $x$, so we have the equality of $B(k^{<\omega},\int^{\oplus}_NY_n)$-valued measures
    \[d\mu_T=h_1d\hat{\mu}_T.\]
    allowing us to identify $h_1(m,n)$ with $\frac{d\mu_T}{d\hat{\mu}_T}(m,n)$
    for each $(m,n)$, $h_1(m,n)$ is a linear operator from $\KK^{<\omega}$ to $Y_n$, and because $X_m\to \int^{\oplus,p}_UX_{m'}dm'$ is a near isometry, we can deduce that it is bounded, and thus can view it as a linear operator from $X_m$ to $Y_n$, so that each $h_1(m,n)\in \Hom(X_m,Y_n)$.

    \end{proof}
\end{adjustwidth}

We now claim
\begin{equation}{\label{eq: almosteverywhere}}
    (Tv)_n\overset{a.e}{=}\int_M \frac{d\mu_T}{d\hat{\mu}_T}(m,n) v(m)d\hat{\mu}_{T,n}(m),
\end{equation}
so that $k_T(m,n):=\frac{d\mu_T}{d\hat{\mu}_T}(m,n)$ satisfies the claim in the beginning. 

Let $\phi\in (\int^{\oplus,q}_N Y_n)^{\vee}=\int^{\oplus,q^*}_NY_n^*$, which we can write as an integral $\int_N\lambda(n)d\nu(n)$. We then calculate 

\begin{align}
    \phi\left(\int_M \frac{d\mu_T}{d\hat{\mu}_T}(m,n) v(m)d\hat{\mu}_{T,n}(m)\right) &=\int_N\lambda(n)\int_M \frac{d\mu_T}{d\hat{\mu}_T}(m,n) v(m)d\hat{\mu}_{T,n}(m)d\nu(n) \\
    &=\int_{M\times N}\lambda(n) \frac{d\mu_T}{d\hat{\mu}_T}(m,n) v(m)d\hat{\mu}_{T}(m,n) \\
    &=\int_{M\times N}\lambda(n)v(m)d\mu_{T}(m,n) \\
    &=\phi(T(v))
\end{align}
where we can interpret the final integral expression as a Dobrakov-integral (See. also Definition 2.1 of \cite{Rod06}).
This gives us Equation (\ref{eq: almosteverywhere}).   
\end{proof}

We're mostly interested in building up a theoretical framework to be able to integrate different kinds of algebraic objects together, namely sheaves and representations, which thankfully can be unified in the more general context:

Suppose $\cat{C}=\cat{Fun}(I,\cat{Vect}_k)$ is the category of functors from a small category $I$ to $\cat{Vect}_k$. Let $\cat{bC}^M:=\cat{Fun}(I,\cat{bBan}^M)$ be the category of measurable families of $\cat{C}$-objects, with the direct integral functor $\int^{\oplus}:\cat{bC}^M\to \cat{C}$.

\begin{corollary}{\label{Corol: integral kernels}}
    Assume the base field $\KK$ is Archimedean.
    Let $(X_m)\in \cat{b}\Cc^M, (Y_n)\in\cat{b}\Cc^N$ and let $T:\int^{\oplus,p}_MX_m \to\int^{\oplus,q}_NY_n$ be a bounded operator. Then, for each $i\in I$ there is a family $(k_{T(i)}(m,n)\in \Hom_{\Cc}(X_m,Y_n))$ and a family of measures $(\mu_n)_{n\in N}$ such that 

   \[T(i):(x_m)_{m\in M}\mapsto \left({\small \int}_M k_{T(i)}(m,n)x_m d\mu_n(m)\right)_{n\in N}\]
    
\end{corollary}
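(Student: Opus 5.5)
The plan is to reduce Corollary \ref{Corol: integral kernels} to Theorem \ref{Thm: integral kernels}, applied one object $i\in I$ at a time, and then to check that the resulting kernels are compatible with the morphisms of $I$. An object $(X_m)$ of $\cat{b}\Cc^M=\cat{Fun}(I,\cat{bBan}^M)$ gives, for each $i\in I$, an abstract Banach bundle $(X_m(i))\in\cat{bBan}^M$. For each arrow $u:i\to j$ it gives a uniformly bounded morphism $(X_m(u))$. Since $\int^{\oplus,p}_M$ is a functor (Proposition \ref{Prop: DI strong exact} and the remark before it), $\int^{\oplus,p}_MX_m$ is the functor $i\mapsto\int^{\oplus,p}_MX_m(i)$. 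A morphism $T$ in $\Cc$ is therefore a natural family of bounded operators
\[T(i):\int^{\oplus,p}_MX_m(i)\to\int^{\oplus,q}_NY_n(i).\]
We impose the same standing hypotheses as in Theorem \ref{Thm: integral kernels}: $M,N$ locally compact Polish, $p\in[1,\infty)$, $q\in(1,\infty)$, and almost all $Y_n(i)$ shrinking for every $i$.

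\textbf{Step 1 (pointwise kernels).} Fix $i\in I$ and apply Theorem \ref{Thm: integral kernels} to $T(i)$. This produces a measure $\hat\mu_{T(i)}$ on $M\times N$, a disintegration $(\hat\mu_{T(i),n})_{n\in N}$, and an essentially bounded family $k_{T(i)}(m,n)\in\Hom_{\cat{Ban}}(X_m(i),Y_n(i))$ that represents $T(i)$ as in Equation (\ref{Eq: integral kernel}).

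If the statement only requires a separate measure family for each $i$, Step 1 already finishes the proof. The statement is written with a single family $(\mu_n)$, so the following steps address that reading.

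\textbf{Step 2 (a common dominating measure).} Assume first that $I$ has countably many objects, enumerated as $i_1,i_2,\dots$. Normalise each total variation measure and set
\[\hat\mu:=\sum_k 2^{-k}\frac{\hat\mu_{T(i_k)}}{1+\hat\mu_{T(i_k)}(M\times N)}.\]
Every $\hat\mu_{T(i)}$ is absolutely continuous with respect to $\hat\mu$, so Radon--Nikodym gives densities $g_i=d\hat\mu_{T(i)}/d\hat\mu$. Disintegrate $\hat\mu$ along $N$ to obtain $(\mu_n)$, as in the proof of Theorem \ref{Thm: integral kernels}, and replace the kernel by $k_{T(i)}g_i$. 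The representation by this common family of measures then follows from the fibrewise identity $d\hat\mu_{T(i),n}=g_i\,d\mu_n$, which holds for $\nu$-almost every $n$ by uniqueness of disintegrations.

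\textbf{Step 3 (naturality).} For each arrow $u:i\to j$, the two composites $T(j)\circ\int X(u)$ and $\int Y(u)\circ T(i)$ are equal. Their kernels are $k_{T(j)}(m,n)X_m(u)$ and $Y_n(u)k_{T(i)}(m,n)$, both taken against the common $\mu_n$. Both kernels arise as the Radon--Nikodym derivative of the same vector measure $\mu_{T(j)\circ\int X(u)}$, so the uniqueness built into the Claim in the proof of Theorem \ref{Thm: integral kernels} should force them to agree for almost every $(m,n)$. This is exactly the statement that $k_T(m,n)$ is a morphism $X_m\to Y_n$ in $\Cc$.

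\textbf{Main obstacle.} Step 3 is where I expect the difficulty, specifically the ``almost everywhere'' part: it only gives agreement off a null set for each arrow separately. When $I$ has countably many morphisms, one can discard a countable union of null sets and the argument goes through. For uncountable $I$, I would first try to reduce to a countable generating set of arrows. Failing that, I would prove the corollary only in the weaker form where each $k_{T(i)}$ is natural almost everywhere, which is all that the displayed formula for $T(i)$ actually requires.
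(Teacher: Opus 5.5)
Your proposal is correct, granting Theorem \ref{Thm: integral kernels} as a black box, and it follows the paper's overall structure. That structure is: apply the theorem to each $T(i)$, pass to a single measure on $M\times N$, absorb the Radon--Nikodym densities into the kernels, and read off naturality from uniqueness of the kernel of $\int Y(u)\circ T(i)=T(j)\circ\int X(u)$.

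The difference lies in how the common measure is produced. The paper compares kernels along each arrow $\phi:i\to j$ first. It asserts that uniqueness puts $\mu(i)$ and $\mu(j)$ in the same measure class, derives $Y(\phi)_nk_{T(i)}=\mu(\phi)\,k_{T(j)}X(\phi)_m$, and then picks one $\mu_T$ in that class. You instead dominate all the $\hat\mu_{T(i)}$ by a weighted sum and absorb the densities before comparing kernels.

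Your route is the sounder one, because the paper's measure-class claim does not follow from naturality. If $X(\phi)=0$ and $Y(\phi)=0$, the naturality square imposes nothing, and $T(i)$ and $T(j)$ may have mutually singular kernel measures. Domination needs no such relation. Its cost is the assumption that $I$ has countably many objects, a restriction the paper avoids only through that unjustified claim. Both arguments need countably many arrows (or a countable generating set) to turn ``natural off a null set for each arrow'' into $k_T(m,n)\in\Hom_\Cc(X_m,Y_n)$, after setting the kernels to $0$ on the exceptional set. The paper passes over this silently; you are right to flag it.

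Three small repairs are needed.

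First, the normalisation $\hat\mu_{T(i_k)}/(1+\hat\mu_{T(i_k)}(M\times N))$ is meaningless when the total mass is infinite, and the theorem does not rule that out. Replace each $\hat\mu_{T(i_k)}$ by an equivalent probability measure before summing. This is possible when the measure is $\sigma$-finite, which the Radon--Nikodym steps in both your argument and the paper's require anyway.

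Second, uniqueness of kernels is not actually part of the Claim, so prove it directly. Test two kernels against families $a\mathds{1}_U$ and functionals $\psi\mathds{1}_V$, with $a$ and $\psi$ in the rational spans of the M-basis vectors of $X_m$ and of the biorthogonal functionals of $Y_n$. Extend from rectangles to all Borel subsets of $M\times N$. Then use totality of those functionals and density of the span of the basis vectors, together with essential boundedness of the kernels.

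Third, Steps 2 and 3 are not optional under a weaker reading. The statement asks for kernels in $\Hom_\Cc(X_m,Y_n)$, that is, natural in $i$, and the paper's proof ends with exactly such a natural family. Step 1 alone therefore never suffices, whether or not a single family $(\mu_n)$ is demanded.
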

    \begin{proof}
        For each $i\in I$, Proposition \ref{Thm: integral kernels} guarantees the existence of the family $(k_{T(i)}(m,n))$ and a measure $\mu_{T(i)}$ on $M\times N$. Furthermore, for each $\phi:i\to j$ in $I$, the corresponding commutative diagram gives us 
        \[\int_MY(\phi)_nk_{T(i)}(m,n)f(m)d\mu(i)_n(m)=\int_Mk_{T(j)}(m,n)X(\phi)_mf(m)d\mu(j)_n(m)\]
        for all $(m,n)\in M\times N$ and all families $f(m)\in \int^{\oplus,p} X_m$. By the uniqueness of the integral kernel for this bounded operator $\int^{\oplus,p}_M X_m(i)\to \int^{\oplus,q}_N Y_n(j)$, we see that $\mu(i)$ and $\mu(j)$ are in the same measure class, and thus there is a measurable function $\mu(\phi):M\times N\to \R^{\geq 0}$ such that $d\mu(i)/d\mu(j)=\mu(\phi)$ and 
        \[Y(\phi)_nk_{T(i)}(m,n)=\mu(\phi)(m,n)k_{T(j)}(m,n)X(\phi)_m.\]
        Thus, by replacing  each $d\mu_{T_i}$ with $\frac{d\mu_{T(i)}}{d\mu_{T}}d\mu_{T}$ for some fixed measure $\mu_{T}$ in the common measure class and incorporating the Radon-Nikodym derivative into the kernel $k_{T(i)}$, we obtain a family of natural transformations $k_{T}(m,n):X_m\to Y_n$, in $\Cc$, such that 
        \[T=\int_Mk_{T}(m,n) (\_) d\mu_n(m)\]
        (where we can calculate all integrals level-wise).
    \end{proof}
\begin{corollary}{\label{Corol: Diag}}
    Assume the base field $\KK$ is Archimedean.
    Suppose $X=\int^{\oplus,p}_MX_m$ and $Y=\int^{\oplus,q}_NY_n$ be two objects in $\cat{C}$, and $f:X\to Y$ a morphism between them. 
    \begin{enumerate}
        \item If $\Hom_{\cat{C}}(X_m,Y_n)=0$ for all $m\in M, n\in N$, then $\Hom(\int^{\oplus}_MX_m,\int^{\oplus}_NY_n)=0$. 
        \item If $M=N$, and $\Hom_{\cat{C}}(X_m,Y_n)=0$ for all $m\neq n$, then $f$ is diagonalisable (in the image of $\int^{\oplus}$).
    \end{enumerate}
\end{corollary}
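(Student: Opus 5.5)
Both parts will come from the integral kernel representation of Corollary \ref{Corol: integral kernels}, applied to the given morphism $f:\int^{\oplus,p}_MX_m\to\int^{\oplus,q}_NY_n$ in $\cat{C}=\cat{Fun}(I,\cat{Vect}_k)$. That corollary gives a measure $\mu_T$ on $M\times N$, a disintegration $(\mu_n)_{n\in N}$, and a family of natural transformations $k_T(m,n)\in\Hom_{\cat{C}}(X_m,Y_n)$ such that
\[f\big((x_m)\big)=\left(\int_M k_T(m,n)x_m\,d\mu_n(m)\right)_{n\in N}.\]
The hypotheses on the $\Hom$-spaces then force the kernel to vanish on the relevant parts of $M\times N$.

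For part (1), the hypothesis $\Hom_{\cat{C}}(X_m,Y_n)=0$ holds for every pair, so $k_T(m,n)=0$ for $\mu_T$-almost all $(m,n)$. The integral above is therefore zero for $\nu$-almost every $n$. Hence $f(x)=0$ in $\int^{\oplus,q}_NY_n$ for every $x$, and so $f=0$. I will check that the kernel is only defined $\mu_T$-almost everywhere and that changing it on a null set does not affect the integral. This is immediate, since the integral against $d\mu_n\,d\nu$ is the integral against $\mu_T$.

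For part (2), with $M=N$, the same argument shows that $k_T(m,n)=0$ off the diagonal $\Delta\subseteq M\times M$. Thus $\mu_T$ can be replaced by its restriction $\mu_T|_\Delta$ without changing $f$. The disintegration of $\mu_T|_\Delta$ along the second factor is then supported on the single point $\{n\}$, so $\mu_n=c(n)\delta_n$ for a measurable function $c\geq0$. This gives
\[f\big((x_m)\big)_n=c(n)\,k_T(n,n)\,x_n.\]
Setting $T_n:=c(n)k_T(n,n)\in\Hom_{\cat{C}}(X_n,Y_n)$ shows that $f=\int^{\oplus}(T_n)$, that is, $f$ lies in the image of the direct integral functor.

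The main obstacle is verifying that $(T_n)$ is actually a morphism of $\cat{bC}^M$, namely that it is Bochner measurable and essentially uniformly bounded. Measurability should follow from the measurability of $k_T$ on $\Delta$ and of $c$, the density of $\mu_T|_\Delta$ pushed to $M$ relative to $\nu$, using the appendix result that weak measurability equals Bochner measurability. Boundedness is harder, because the kernel bound $\|k_T\|\leq1$ does not control $c$. Here I would first try using the approximate-isometry embeddings of Corollary \ref{Cor: approximate isometries}. Testing $f$ on sections $x_m=b\,\mathds{1}_U$ with $U$ shrinking to $n$ should give $\|T_n\|\leq\|f\|$ almost everywhere when $p=q$. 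For $p\neq q$, I expect a scaling argument of the form $\mu(U)^{1/q-1/p}$ to force $c=0$ unless $\mu$ has atoms.
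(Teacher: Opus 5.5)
Up to the formula $f\big((x_m)\big)_n=c(n)k_T(n,n)x_n$, your argument is the paper's. Both parts are read off from Corollary \ref{Corol: integral kernels}: the kernel vanishes wherever the $\Hom$-space does, and in part (2) the disintegration collapses to point masses on the diagonal. You keep the multiplier $c(n)$, which the paper silently drops. The paper never checks that the diagonal family is a morphism of $\cat{bC}^M$, so your last paragraph goes beyond it, but two steps of your plan there fail.

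First, measurability. Theorem \ref{Thm: Separable measurablity} is about sections, i.e.\ morphisms out of $(k)$. For operator families, weak measurability does not imply Bochner measurability; the paper's own example $t\mapsto\mathds{1}_{A_t}\in\ell^\infty$, in the remark after Lemma \ref{matrix coefficients are measurable maps}, shows this. The failure genuinely occurs under the hypotheses of part (2), even for $p=q$. Take $I=*/\mathbb{Z}$, $M=N=[0,1)$ with Lebesgue measure, $X_t=(\ell^1,\{e_j\})$ and $Y_t=(\C,\{1\})$, with the generator acting on both by the scalar $e^{2\pi i t}$. Then $\Hom_{\cat{C}}(X_m,Y_n)=0$ for $m\neq n$. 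The map $f(x)(t)=\sum_{j\in A_t}e_j^*\big(x(t)\big)$ is an intertwiner $\int^{\oplus,2}X_t\to\int^{\oplus,2}Y_t$ of norm at most $1$. However, any $(S_t)$ with $f=\int^{\oplus,2}(S_t)$ equals $\mathds{1}_{A_t}$ for almost every $t$ (test on $e_j\mathds{1}_U$). A morphism of $\cat{bBan}^M$ between these bundles is essentially separably valued, while $\|\mathds{1}_{A_s}-\mathds{1}_{A_t}\|_\infty=1$ for $s\neq t$. So $f$ is diagonal but not in the image of $\int^{\oplus}$, and an extra hypothesis is needed. Finite-dimensional fibres would suffice, since there weak and Bochner measurability of operator families agree.

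Second, boundedness. For $p=q$ your test works, and you do not even need Corollary \ref{Cor: approximate isometries}: take $U$ a finite-measure subset of $\{n:\|T_nb\|>(\|f\|+\epsilon)\|b\|_{X_n}\}$. For $p<q$, the resulting inequality $\|T_n\|\,\mu(U)^{1/q-1/p}\lesssim\|f\|$ does force $T_n=0$ off the atoms. For $p>q$, however, $\mu(U)^{1/q-1/p}\to0$ and the test says nothing, so your expectation is false. Take $X_t=Y_t=(\C,\{1\})$ with the same $\mathbb{Z}$-action. The inclusion $L^4[0,1)\to L^2[0,1)$ is a nonzero diagonal morphism $\int^{\oplus,4}X_t\to\int^{\oplus,2}Y_t$ with $c\equiv1$ on an atomless space. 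Multiplication by $t^{-1/5}\in L^4$ is bounded by H\"older, yet its fibre family is not essentially bounded. For $p>q$, boundedness of $f$ only places $n\mapsto\|T_n\|$ in $L^r$ with $1/r=1/q-1/p$. In any case, ``the image of $\int^{\oplus}$'' has no meaning when $p\neq q$, since $\int^{\oplus,p}$ is a functor for one fixed $p$.
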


\begin{proof}
    \begin{enumerate}
    \item Let $f:\int^\oplus_M X_m\to\int^\oplus_NY_n$ be a $\cat{C}$-morphism. Then $f$ arises from a matrix $f(m,n)\in \Hom(X_m,Y_n)$ so that 
    \[f(x_m)_{m\in M}=\left(\int_Mf(m,n)x_m\right)_{n\in N}\]
    We then conclude that $f(m,n)=0$ for all $m,n$ by the hypothesis and obtain that $f=0$.

        \item We use Corollary \ref{Corol: integral kernels} to conclude that $f$ takes the form 
        \[f(i):(x_m)_{m\in M}\mapsto \left({\small\int}_M k_{f(i)}(m,n)x_m d\mu_n(m)\right)_{n\in N}\]
        and $k_{f(i)}(m,n)=0$ unless $m=n$. Thus, the only measures on $M$ that give anything non-trivial are the (multiples of) counting measure. Thus, 
        $f(i)(x_m)=k_{f(i)}(m,m)(x_m)$ and $f$ is diagonalisable.
        
    \end{enumerate}
\end{proof}

\section{Direct Integrals of Sheaves}

In this section, we return again to the general situation where the base field $\KK$ can be either Archimedean (with $p\in [1,\infty]$) or non-Archimedean (with $p=0 $ or $\infty$).
Up until this point in the paper we have used $x$ to denote an element of a Polish space $M$. Since we need a Polish space $M$ and a Topological space $X$, we use the letters $m$ and $x$ to denote elements of $M$ and $X$ respectively. Thus, to avoid unnecessary notational confusion will use $\int^{\textrm{meas}}$ instead of $\int^m$ to denote the measurable sections. 

\subsection{Conditions for detecting sheaves of measurable abstract Banach bundles}

    We now move onto the notion of sheaves of abstract Banach bundles.
    Throughout, we let $X$ be a second countable topological space.  

    Let $\cat{mPreSh}^M(X)$ be the category of functors $\Fc:\cat{Top}(X)^\textrm{op}\to\cat{mBan}^M$. As $\cat{mBan}^M$ is an elementary Quasi-abelian category (see Definition 2.1.10 of \cite{Schneiders1999}), one can define the category of sheaves valued in $\cat{mBan}^M$ just as in [\cite{Schneiders1999} \S 2.2.1], which we will denote by $\cat{mSh}^M(X)$. 
    The compositions of $\Fc$ with either the measurable sections functor $\int^{\textrm{meas}}$ or the evaluation functor $\{\ev_m\}_{m\in M}$ automatically provide us with two functors, 
    \[\cat{mPreSh}^M(X)\xrightarrow{\int^{\textrm{meas}},\ev_m}\cat{PreSh}(X)\]
    which are easily seen to be strictly exact as a consequence of Proposition \ref{Prop: Measurable exactness}.

\begin{prop}{\label{Prop: measurable sheafiness}}
    Let $\Fc\in\cat{mPreSh}^M(X)$. 
\begin{enumerate}
    \item  The presheaf $\Fc$ is a sheaf if and only if every $\ev_m\Fc$ is a sheaf.
    \item The presheaf $\int^{\textrm{meas}}\Fc$ is a sheaf if and only if $\ev_m\Fc$ is a sheaf for almost all $m$. 
\end{enumerate}
\end{prop}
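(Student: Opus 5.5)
We reduce the sheaf condition to the exactness of the Čech-type sequences. In the quasi-abelian setting of \cite{Schneiders1999} \S 2.2.1, a presheaf $\Fc$ is a sheaf if and only if for every open $U$ and every open covering $U=\bigcup_{i}U_i$ (countable, since $X$ is second countable) the sequence
\[0\to \Fc(U)\to \prod_i \Fc(U_i)\rightrightarrows \prod_{i,j}\Fc(U_i\cap U_j)\]
is strictly exact. Equivalently, $\Fc(U)\to\prod_i\Fc(U_i)$ is a kernel of the difference map $\prod_i\Fc(U_i)\to\prod_{i,j}\Fc(U_i\cap U_j)$. The countable products exist in $\cat{mBan}^M$ via pointwise products with reindexed M-bases, as for coproducts. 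The plan is to apply the pointwise exactness results of Section 3 to these sequences.

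For part 1, we use that kernels and strictness in $\cat{mBan}^M$ are computed pointwise (Lemma \ref{Lem: ker and coker} and the discussion after Theorem \ref{Thm: mBan QA}). If $\Fc$ is a sheaf, then the strong exactness of $\ev_m$ (Proposition \ref{Prop: Measurable exactness}) sends each sheaf sequence to a strictly exact sequence in $\cat{Ban}$. Here we must check that $\ev_m$ commutes with the countable products involved; this holds because the products are built pointwise. Hence each $\ev_m\Fc$ is a sheaf. Conversely, if every $\ev_m\Fc$ is a sheaf, then the first part of Proposition \ref{Prop: Reflecting Exactness} shows that the sequence is strictly exact in $\cat{mBan}^M$. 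It remains to see that the comparison map $\Fc(U)\to\ker(\cdots)$ is an isomorphism and not merely a pointwise one. Proposition \ref{Prop: pointwise iso implies iso} gives this under its Schauder hypothesis; otherwise we argue directly that a morphism which is pointwise a kernel is a kernel in $\cat{mBan}^M$, by the uniqueness of kernels together with the pointwise construction of Lemma \ref{Lem: ker and coker}.

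For part 2, we use that $\int^{\textrm{meas}}=\widetilde{\Hom}(\mathds{1},\_)$ preserves kernels, since it is a hom-functor, and products, since measurable sections of a countable product are tuples of measurable sections. If $\ev_m\Fc$ is a sheaf for $m\notin Z$ with $\mu(Z)=0$, we discard $Z$; this is harmless because $\int^{\textrm{meas}}$ factors through $\widetilde{\cat{mBan}}^{(M,\mu)}$. Part 1 then shows that $\Fc|_{M\setminus Z}$ is a sheaf, and applying the left-exact functor $\int^{\textrm{meas}}$ gives the sheaf condition for $\int^{\textrm{meas}}\Fc$ in $\cat{Vect}$. Conversely, suppose $\int^{\textrm{meas}}\Fc$ is a sheaf. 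Then the sequence $0\to\int\Fc(U)\to\prod_i\int\Fc(U_i)\to\prod_{i,j}\int\Fc(U_i\cap U_j)$ is exact. By the second part of Proposition \ref{Prop: Reflecting Exactness}, for each covering there is a null set $Z_{U,\{U_i\}}$ off which the $\cat{mBan}^M$ sequence is strictly exact, and by the pointwise description the corresponding $\ev_m$ sequence is strictly exact for $m$ off that set.

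The main obstacle is that there are uncountably many coverings, so a naive union of the exceptional null sets need not be null. We would first restrict to open sets and coverings drawn from a fixed countable base $\Bc$ of $X$. There are countably many finite-intersection-closed data built from $\Bc$, hence countably many null sets, whose union $Z$ is null. We then argue that the sheaf condition for coverings by basic opens on basic opens implies the full sheaf condition for each $\ev_m\Fc$, in the spirit of sheaves on a base. This step is delicate because $\ev_m\Fc$ is given on all opens, not only basic ones, and a general covering of a general open set involves uncountably many subcovers. If needed, we would additionally use countable refinements, available by second countability (Lindelöf), to compare the values on arbitrary opens with the limits over basic opens they contain.
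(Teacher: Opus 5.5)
Your route is the paper's: reduce to strict exactness of the \v{C}ech sequences and invoke Propositions \ref{Prop: Measurable exactness} and \ref{Prop: Reflecting Exactness}. The genuine gap is the step you flagged at the end of part 2, and it cannot be closed, because the ``only if'' direction of part 2 is false as stated.

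Proposition \ref{Prop: Reflecting Exactness} only yields a null set $Z_{U,\{U_i\}}$ for each covering separately. The paper's one-line proof tacitly takes the union over all of them. Passing to a countable base does not help, because the values of $\ev_m\Fc$ on non-basic opens are not controlled by its restriction to a base, and the defect can sit on an open set that depends on $m$.

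Concretely, take $X=\R$, $M=[0,1]$ with Lebesgue measure, and $W_m=\R\setminus\{m+1\}$. Let $\Fc_m$ be the skyscraper sheaf at $m$, except that $\Fc_m(W_m)=\KK^2$. The restriction $\Fc_m(\R)\to\Fc_m(W_m)$ is $a\mapsto(a,0)$, and $\Fc_m(W_m)\to\Fc_m(V)$ is $(a,b)\mapsto a$ if $m\in V$ (and $0$ otherwise).
\begin{itemize}
\item Since $\R$ is the only open set strictly containing $W_m$, this is a presheaf.
\item The section $(0,1)\in\Fc_m(W_m)$ restricts to zero on both members of the covering $W_m=(-\infty,m+1)\cup(m+1,\infty)$. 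So \emph{no} $\ev_m\Fc$ is a sheaf.
\item For all but countably many $m$, $W_m$ is not in a given countable base, so there $\ev_m\Fc$ restricted to the base is just the skyscraper. This is exactly what breaks your proposed reduction.
\item For fixed $U$, the set $\{m:W_m=U\}$ has at most one point. So each $\Fc(U)$ (with the obvious M-basis) and each restriction map (a step basic family) lies in $\cat{mBan}^M$ and agrees with the skyscraper family off a null set.
\item Hence $\int^{\textrm{meas}}\Fc(U)$ is naturally the space of measurable functions on $U\cap M$ modulo null functions. This is a sheaf on $\R$; by second countability it suffices to glue over countable subcovers.
\end{itemize}
Any correct version must either keep the null set dependent on the covering, or add a hypothesis that reduces the sheaf condition of every $\ev_m\Fc$ to countably many fixed coverings.

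Separately, your claim that countable products exist in $\cat{mBan}^M$ as pointwise products with reindexed M-bases is false. Already for $M$ a point, a product of countably many copies of $\KK$ would have to be $\KK^{\N}$ as a vector space, since morphisms out of $\KK$ correspond to arbitrary scalar sequences. It would also need a complete norm making every coordinate projection bounded. The open mapping theorem rules this out, because $\KK^{\N}$ is not normable. A sup-norm product has the universal property only for uniformly bounded families.

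This is why the paper writes the \v{C}ech sequence in a completion of the cocompletion. Your assertions that $\ev_m$ and $\int^{\textrm{meas}}$ commute with these products therefore have to be justified there, or replaced by a direct pointwise gluing argument over countable subcovers.
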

  \begin{proof}
  View the abstract Banach bundles $\Fc(U)$ as belonging to the cocompletion of $\cat{mBan}^M$. By Proposition 2.1.15 of \cite{Schneiders1999}, this category is also complete. Therefore, because the sheaf property is equivalent to the strict exactness of the sequence 
   \begin{equation}\label{Eq: Sheaf Exact}
        0\to\Fc(U)\to\prod_i\Fc(U_i)\to\prod_{i,j}\Fc(U_i\cap U_j),
    \end{equation}
     the Lemma follows directly from Proposition \ref{Prop: Reflecting Exactness}.
  \end{proof}

We now provide conditions for a sheaf of abstract Banach bundles $(\Fc_m)_{m\in M}$ on a space $X$ to define an element of $\cat{mPreSh}^M(X)$, so that 
$$\int^{\textrm{meas}}\Fc_m(U)dm$$
is well defined.

\begin{definition}
    Let $\Fc=\{\Fc_m\}_{m\in M}$ be a sheaf of measurable abstract Banach bundles in $\cat{mSh}^M(X)$. We say that $\Fc$ is a locally constant (resp. constructible/skyscraper) sheaf of abstract Banach bundles if (almost) all $\Fc_m=\ev_m(\Fc)$ are themselves locally constant (resp. constructible/skyscraper) sheaves. 
\end{definition}

    We consider now the category of sheaves of bounded abstract Banach bundles $\cat{bSh}^M(X)$, defined in the same manner as $\cat{mSh}^M(X)$. A priori, it seems intuitive that there should be no difference between the direct integral and the measurable sections functor, but this would be incorrect, and as even this simple example shows, the direct integral of a sheaf may not output a sheaf in general.

\begin{counterexample}

    Let $M=X=\C$, and let $\Fc=\{\Fc_m\}$ be the sheaf of abstract Banach bundles with $\Fc_m$ the skyscraper sheaf at $m\in X$.
        Then we can calculate the direct integral as
        \[\left(\int^{\oplus,p}_M\Fc_mdm\right)(U)=\int^{\oplus,p}_M\Fc_m(U)dm=L^p(U).\]
        This defines a presheaf. However, because there is a strictly countable cover $\bigcup_{i\in \N} U_i$ of $X=\C$ such that each $U_i$ has finite measure, it follows that the constant function $i\in\KK\subseteq L^p(U_i)$ but, when we glue to get a function on $X$, the constant function on $X$ is not in $L^{p}$. 
\end{counterexample}

We explain this phenomenon with help of the following Proposition. 

\begin{prop}{\label{prop: notsheaf}}
    Let $\Fc\in \cat{bPreSh}^M(X)$ be a presheaf $\Fc:\cat{Top}(X)^\textrm{op}\to \cat{bBan}^M$. 
\begin{enumerate}
    \item  The presheaf $\Fc$ is a sheaf if and only if every $\ev_m\Fc$ is a sheaf.
    \item If almost all $\ev_m\Fc$ are sheaves, then the presheaf $\int^{\oplus,p}\Fc$ is local, and satisfies \emph{finite} gluing. 
\end{enumerate}
\end{prop}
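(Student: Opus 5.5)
Part 1 should be handled exactly as in Proposition \ref{Prop: measurable sheafiness}. Regard each $\Fc(U)$ as an object of the cocompletion of $\cat{bBan}^M$. The sheaf condition for a cover $U=\bigcup_i U_i$ is the strict exactness of Sequence (\ref{Eq: Sheaf Exact}). Kernels, cokernels and (co)products, and hence strictness, are computed pointwise in $\cat{bBan}^M$: Lemma \ref{Lem: ker and coker} together with Theorem \ref{thm: bBan QA} and the remark that the kernels and cokernels constructed there already lie in $\cat{bBan}^M$. So the first bullet of Proposition \ref{Prop: Reflecting Exactness} applies verbatim. Conversely, the strong exactness of each $\ev_m$ (Proposition \ref{Prop: Measurable exactness}) shows that if $\Fc$ is a sheaf then every $\ev_m\Fc$ is. The only point I would check is that the product over the cover is still computed pointwise in $\cat{bBan}^M$. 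For infinite covers the uniform bound on the projections is automatic, since each projection has norm at most $1$.

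For part 2, fix an open $U$ and a cover $\{U_i\}$, and pass to the null sequence of Banach spaces
\[0\to \int^{\oplus,p}_M\Fc_m(U)\to\prod_i\int^{\oplus,p}_M\Fc_m(U_i)\to\prod_{i,j}\int^{\oplus,p}_M\Fc_m(U_i\cap U_j).\]
\textbf{Locality.} Suppose a section $(v_m)$ restricts to zero on every $U_i$ in $\int^{\oplus,p}_M\Fc_m(U_i)$. Then for each $i$ the restriction vanishes off a null set $Z_i$. The union $\bigcup_i Z_i$, together with the null set where $\ev_m\Fc$ fails to be a sheaf, is still null because $X$ is second countable and we may take countable covers. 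Off this null set, locality of $\ev_m\Fc$ gives $v_m=0$, so $(v_m)=0$ in the direct integral.

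\textbf{Finite gluing.} Given finitely many $U_1,\dots,U_n$ and compatible sections $(s^i_m)\in\int^{\oplus,p}_M\Fc_m(U_i)$, glue pointwise to $v_m\in\Fc_m(U)$ off a null set. Two things must then be checked.
\begin{enumerate}
\item Measurability of $(v_m)$. The glued family is the image of $(s^i_m)_i$ under the kernel map of the finite sequence, which is strict in $\cat{bBan}^M$. Equivalently, since the direct integral functor is strongly exact (Proposition \ref{Prop: DI strong exact}), the finite product commutes with $\int^{\oplus,p}$ as a finite biproduct. The kernel of the map to $\prod_{i,j}$ is then $\int^{\oplus,p}$ of the pointwise kernel, which is $\Fc(U)$ by part 1.
\item $p$-integrability. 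This also comes out of the strong exactness, via Lemma \ref{lem: uniformly strict}: the pointwise inclusion $\Fc_m(U)\to\prod_{i\le n}\Fc_m(U_i)$ is uniformly strict, so $\|v_m\|\le C\max_i\|s^i_m\|$ and hence $\|v_m\|^p\le C^p\sum_i\|s^i_m\|^p$.
\end{enumerate}

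\textbf{Main obstacle.} The hard step is justifying that uniform constant $C$. Pointwise, the open mapping theorem gives a constant $C_m$, but uniformity in $m$ is exactly strictness in $\cat{bBan}^M$ rather than in $\cat{mBan}^M$. I would first try to get it from part 1 applied to $\cat{bBan}^M$, where the kernel object is $\Fc(U)$ itself with bounded inclusion, via Lemma \ref{lem: uniformly strict}. If that route fails, I would instead take the norm on $\Fc_m(U)$ induced from the finite product; equivalently, I would view $\Fc(U)$ as the kernel object, so that the estimate holds with $C=1$.

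For countable covers I would not attempt gluing, since finite gluing is all the statement claims. The counterexample shows that $\sum_i\|s^i\|^p$ can diverge, which explains why the argument stops at finite covers.
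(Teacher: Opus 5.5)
Your locality argument is correct. It is more direct than the paper's, which deduces locality from $\int^{\textrm{meas}}\Fc$ being a sheaf, and your reduction to a countable subcover is what keeps the exceptional set null.

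The rest of your proposal follows the paper's route: pointwise reflection of exactness for part 1, then strong exactness of $\int^{\oplus,p}$ on the finite sheaf sequence. It fails at the step you yourself flag as the main obstacle, because the uniform constant $C$ cannot be obtained. What needed checking in part 1 was not the projections but your claim that strictness is computed pointwise in $\cat{bBan}^M$. That claim is false. By Lemma \ref{lem: uniformly strict}, strictness in $\cat{bBan}^M$ requires bounds uniform in $m$. Proposition \ref{Prop: pointwise iso implies iso} also has no $\cat{bBan}^M$ analogue: the paper's own example of multiplication by $x$ on $(0,1]$ is a pointwise isomorphism that is not an isomorphism. So the first bullet of Proposition \ref{Prop: Reflecting Exactness} does not transfer, the ``if'' direction of part 1 fails, and your first route to $C$ rests on it. Your fallback of giving $\Fc_m(U)$ the norm induced from $\prod_i\Fc_m(U_i)$ does not help either, since it replaces $\Fc$ by a different presheaf. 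The paper's proof has the same hole: it simply asserts that the finite sequence is strictly exact in $\cat{bBan}^M$.

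In fact finite gluing, and the ``if'' half of part 1, are false as stated. Let $X=\{a,b\}$ be discrete and $M=(0,1]$ with Lebesgue measure. Put $\Fc(\emptyset)=0$, let $\Fc(\{a\})$ and $\Fc(\{b\})$ be the constant bundle $\KK$, and let $\Fc(X)$ be the constant bundle $\KK^2$. Take the restrictions to be $(a_1,a_2)\mapsto ma_1$ and $(a_1,a_2)\mapsto ma_2$. These are continuous families of norm at most $1$, so $\Fc\in\cat{bPreSh}^M(X)$.

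Each $\ev_m\Fc$ is a sheaf, because $\Fc_m(X)\to\Fc_m(\{a\})\oplus\Fc_m(\{b\})$ is invertible for $m>0$. But the inverse is multiplication by $1/m$, so $\Fc$ is not a sheaf in $\cat{bBan}^M$. Now take the constant section $1$ over $\{a\}$ and $0$ over $\{b\}$; they are compatible. They can only glue to $m\mapsto(1/m,0)$, which does not lie in $\int^{\oplus,p}_M\Fc_m(X)=L^p((0,1];\KK^2)$ for any $p$. So finite gluing fails.

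The missing ingredient is a uniformity hypothesis. Suppose $\Fc$ is a sheaf in $\cat{bBan}^M$ itself (off a null set), i.e.\ the pointwise gluing maps are essentially uniformly bounded in $m$. Then Lemma \ref{lem: uniformly strict} supplies your $C$, and your argument (equivalently, Proposition \ref{Prop: DI strong exact}) gives finite gluing.
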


\begin{proof}
\begin{enumerate}
    \item  The proof of the first item is similar to that of $\cat{mBan}^M$, the point being that one can view the outputs as objects in the cocompletion, and the functor $\ev_m$ can be uniquely extended to this cocompletion and remain strongly exact, the rest of the argument of Lemma \ref{Prop: measurable sheafiness} works out. 
    \item For the second item, consider an open set $U$ and a \emph{finite} cover $\{U_i\}_{i\in I}$. Then finite gluing and finite locality follows from the strict exactness of the sequence
    \begin{equation}\label{Eq: Sheaf Exact 2 }
        0\to\int^{\oplus,p}\Fc(U)\to\prod_{i\in I}\int^{\oplus,p}\Fc(U_i)\to\prod_{i,j\in I}\int^{\oplus,p}\Fc(U_i\cap U_j)
    \end{equation}
    which makes sense in $\cat{bBan}^M$ because each product $\prod_{i\in I}=\bigoplus_{i\in I}$ when $I$ is finite. 

    The strict exactness of this sequence follows directly from the strong exactness of the direct integral $\int^{\oplus,p}$ (see Proposition \ref{Prop: DI strong exact}).

    To show that the locality axiom holds for arbitrary covers, it follows simply because, when viewed as sheaves on vector spaces, $\int^{\oplus,p}\Fc_m$ is a subsheaf of $\int^{\textrm{meas}}\Fc_m$, and Proposition \ref{Prop: measurable sheafiness} tells us that $\int^{\textrm{meas}}\Fc_m$ is a sheaf.
\end{enumerate}  
\end{proof}

\begin{remark}
    Because of the example preceding the last proposition, the direct integral of sheaves of abstract Banach bundles need not generally satisfy the infinite gluing axiom. The reason for this discrepancy is that the measurable sections functor acts like a product (see Example \ref{Ex: measurable sections sim product}), and the direct integral, like a coproduct, so that $\int^{\textrm{meas}}\prod_I=\prod_I\int^{\textrm{meas}}$, but $\int^{\oplus,p}\prod_I\neq\prod_I\int^{\oplus,p}$.
\end{remark}
        Given a sheaf of abstract Banach bundles $\Fc:\cat{Top}(X)^{\textrm{op}}\rightarrow \cat{bBan}^M$, when we refer to the direct integral, we will mean the sheafification of the presheaf obtained by post-composing with the functor $\int^{\oplus,p}_M:\cat{bBan}^M\rightarrow \cat{Vect}$.  
        In our example, we calculate the direct integral as
        \[\int^{\oplus,p}_M\Fc_mdm=\Lc^p \textrm{ where } \Lc^p(U)=L^p_{\loc}(U).\]
        
\subsection{Some criteria for sheaves valued in $\cat{bBan}^M$}

\begin{prop} Let $M$ be a measure space, and assume that $X$ is a second countable Hausdorff topological space, with Borel $\sigma$-algebra.
    \begin{enumerate}
        \item 
        Let $(\Fc_m)$ be a family of locally constant sheaves on $X$, which we can identify with a family of representations of $\pi_1(X)$. If the map
        \begin{align*}
            M&\to \coprod_n \Hom(\pi_1(X),\GL_n(\C) )\\
            m&\mapsto \Fc_m
        \end{align*}
        is measurable, then there is a sheaf 
        $\Fc:\cat{Top}(X)^{\operatorname{op}}\to \cat{mBan}^M$
        such that each $\Fc_m=\ev_m\Fc$.
        \item Let $(\Fc_m)$ be a sheaf of abstract Banach bundles on $X$ such that $\Fc_{m}$ is a skyscraper sheaf at the point $x_m\in X$. 
        Suppose the maps $P:m\in M\mapsto x_m\in X$ and $d:m\in M\mapsto \dim(\Fc_{m,x_m})\in \N$ are measurable. Then there is a sheaf 
        $\Fc:\cat{Top}(X)^{\operatorname{op}}\to \cat{mBan}^M$
        such that each $\Fc_m=\ev_m\Fc$.
    \end{enumerate}
\end{prop}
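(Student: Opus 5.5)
The plan is to build $\Fc$ open set by open set, reducing everything to the existence of measurable M-bases and measurable restriction maps. Then, by Proposition \ref{prop: notsheaf}(1) (equivalently Proposition \ref{Prop: measurable sheafiness}(1)), $\Fc$ is a sheaf as soon as every $\ev_m\Fc=\Fc_m$ is one, and this holds by hypothesis. So the real content is that the assignment $U\mapsto (\Fc_m(U))_{m\in M}$ lands in $\cat{mBan}^M$, and that the restriction maps are Bochner measurable morphisms.

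For part 1, I would first partition $M=\coprod_n M_n$ by rank, $M_n=\{m:\Fc_m \text{ has rank } n\}$. This partition is measurable because the map to $\coprod_n\Hom(\pi_1(X),\GL_n(\C))$ is. Since $X$ is second countable, it has countably many connected components, and I can reduce to a countable base of connected open sets $U$. On a connected open $U$ with basepoint $u$, $\Fc_m(U)$ is the space of $\pi_1(U,u)$-invariants of $\rho_m|_{\pi_1(U)}$ acting on $\C^n$, with the Euclidean norm (or sup norm, for sections over non-connected $U$ taken as a product over components). The key step is to choose measurably in $m$ a spanning family of $\Fc_m(U)=\bigcap_{\gamma}\ker(\rho_m(\gamma)-1)$. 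I would take $v_j(m)$ to be the images of the standard basis vectors $e_j$ under the orthogonal projection onto this intersection. Since $\pi_1(U)$ is countable and the intersection is of countably many kernels of matrices with measurable entries, this projection is a measurable matrix-valued function; for instance it is the limit of finite intersections, each computable from measurable data. For the dual vectors I take $\phi_j=\langle\cdot,e_j\rangle$. Lemma \ref{Lem: Gram--Schmidt} then produces an M-basis making $(\Fc_m(U))$ an object of $\cat{mBan}^M$. The restriction maps $\Fc_m(U)\to\Fc_m(V)$ are inclusions of invariant subspaces, equivalently parallel transport along a chosen path. Their matrix coefficients are measurable, and since the spaces are finite dimensional, Proposition \ref{Prop: Decent=Everything} together with step approximation of measurable matrix coefficients upgrades weak measurability to Bochner measurability. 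Functoriality in $U$ then holds pointwise.

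For part 2, $\Fc_m(U)=\Fc_{m,x_m}\cong\C^{d(m)}$ if $x_m\in U$, and $0$ otherwise. The zero sets are $\{m: x_m\notin U\}=P^{-1}(X\setminus U)$, which is measurable. I would choose the basis $b_i(m)=e_i$ for $i\le d(m)$ and $b_i(m)=0$ otherwise, cut down by $\mathds{1}_{P^{-1}(U)}$. Norm measurability (Condition \ref{Condition 4}) follows because, on each measurable piece $\{d=k\}$, the norm is a fixed norm on $\C^k$; here I would fix the $\ell^2$ norm using the remark that $\cat{mVect}^M$ is equivalent to the $\ell^p$-normed subcategory. The restriction maps are the identity or zero on measurable pieces, so they are step basic families.

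I expect the main obstacle to be part 1's measurable choice of bases for the invariant subspaces, together with the claim that this choice is compatible with the norms. Dimension jumps of the invariants, as in the earlier remark on kernels not being continuous, mean this can only be done measurably, not continuously. If the projection argument is awkward, my fallback is to note that $\Fc_m(U)$ is the kernel of the measurable morphism $(\rho_m(\gamma_k)-1)_k$ from the constant bundle $(\C^n)$ into a countable product, and to invoke Lemma \ref{Lem: ker and coker} directly, restricted to each $M_n$.
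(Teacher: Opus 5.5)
Your proposal is correct, and its architecture matches the paper's proof. You split $M$ by rank, realise $\Fc_m(U)$ as the $\rho_m$-invariants in a fixed stalk $\C^n$, give these measurable M-bases, check that the restriction maps are measurable, and get sheafiness pointwise from Proposition~\ref{Prop: measurable sheafiness}. Your part 2 coincides with the paper's.

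The real difference is the key step of part 1. The paper writes $\Fc_m(U)=\ker(\{\rho_m(T_i)-I\}_i)$ for a \emph{finite} generating set $\{T_i\}$ of $\im(\pi_1(U)\to\pi_1(X))$ and lets Lemma~\ref{Lem: ker and coker} supply the M-basis; this is exactly your fallback. You instead write down explicit measurable spanning vectors $P_me_j$ and feed them to Lemma~\ref{Lem: Gram--Schmidt}. Here $P_m$ is the orthogonal projection onto the invariants, obtained as a pointwise, eventually stationary limit of measurable projections onto finite sub-intersections.

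The paper's route is shorter once the kernel lemma is available. Yours is more elementary: it bypasses the $L^1$-direct-integral machinery behind that lemma, which is overkill for fibres of dimension at most $n$. It is also more general, because $\im(\pi_1(U)\to\pi_1(X))$ need not be finitely generated (take $U=X=\R^2\setminus(\mathbb{Z}\times\{0\})$), whereas you only need countability. That countability should be of this image, a subgroup of $\pi_1(X)$, not of $\pi_1(U)$ as you wrote.

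Your description of restriction maps as $\rho_m$ of a fixed loop built from chosen paths is also sounder than the paper's identification via a simply connected $\tilde X$ mapping bijectively and locally homeomorphically onto $X$. Such a map is a homeomorphism, which would force $X$ itself to be simply connected.

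Two small repairs are needed. First, in the fallback, a countable product of copies of $\C^n$ is not an object of $\cat{mBan}^M$. Use instead the single measurable positive matrix $\sum_k 2^{-k}(1+\|\rho_m(\gamma_k)\|)^{-2}(\rho_m(\gamma_k)-1)^*(\rho_m(\gamma_k)-1)$, whose kernel is the same intersection. Second, your sup-norm product over the components of a disconnected $U$ becomes non-separable, and so leaves $\cat{mBan}^M$, as soon as infinitely many components carry nonzero invariants. The paper's proof silently ignores this case as well.
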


\begin{proof}
 It is enough to show that both in cases we can construct a functor (i.e. a presheaf) $\Fc:\cat{Top}(X)^{\operatorname{op}}\to \cat{mBan}^M$ such that $\Fc_m=\ev_m\Fc$ for every $m\in M$. The fact that these presheaves are in fact sheaves is the content of Proposition \ref{Prop: measurable sheafiness}.
    \begin{enumerate}

        \item First of all, we want to show that for any $m\in M$ and $U\subseteq X$ we can endow $\Fc_m(U)$ with a Markushevic basis so that $(\Fc_m(U),\{b_i(m)\}_{i\in\N})_{m\in M}\in\cat{mBan}^M$. For every $m\in M$, we denote by $\rho_m\in \operatorname{Hom}(\pi_1(X),\operatorname{GL}_n(\C))$ the finite dimensional representation of $\pi_1(X)$ associated to $\Fc_m$. Choose a point $x\in X$, and let $V_m=\Fc_{m,x}$ be the stalk at $x$. We may assume, by splitting $M=\bigcup_n M_n$ if necessary, that the dimension of $V_m$ is constant, so that $V_m=\C^n$ for some fixed $n\in\N$ and we endow each $V_m$ with the standard orthonormal basis of $\C^n$ (which is of course a Markushevich basis). Now, one can choose a simply connected space $\tilde{X}$ such that $p:\tilde{X}\rightarrow X$ is a bijective continuous map, and is locally a homeomorphism (for example, one can take a fundamental domain in a universal cover).

        As $\tilde{X}$ is simply connected, for any point $y\in \tilde{X}$ there is a unique path (up to homotopy) $y$ to $x$, and we can use this path to canonically identify the stalks $\gamma_y:\Fc_{m,y}\xrightarrow{\sim} \Fc_{m,x}$. Now, let $U\subseteq X$ be an open subset. For each $y\in U$, we obtain an injective map $\Fc_m(U)\to\Fc_{m,y}\xrightarrow{\gamma_y}\Fc_{m,x}$, where we can identify 
        \begin{align*}
            \Fc_m(U)=(\Fc_{m,y})^{\im(\pi_1(U)\rightarrow \pi_1(X))}&=\bigcap_{T_{i}}\ker(\rho_m(T_{i})-1) \\
            &=\ker\bigg(\{\rho_m(T_i)-I\}:\Fc_{m,y}\to \Fc_{m,y}^{\#\{T_i\} }\bigg)
        \end{align*} where $T_i$ is a finite set of generators of  $\im(\pi_1(U)\rightarrow \pi_1(X))$. By the assumptions, the maps $m\mapsto \rho_m(T_i)$ are all measurable, so that $(\Fc_m(U))$ becomes a measurable abstract Banach bundle, inheriting a Markushevich basis from the kernel construction of Lemma \ref{Lem: ker and coker}. This shows that $(\Fc_m(U),\{b_i(m)\}_{i\in\N})_{m\in M}\in\cat{mBan}^M$.
        
        To see that the association $U\mapsto(\Fc_m(U),\{b_i(m)\}_{i\in\N})_{m\in M}$ defines a functor $\Fc:\cat{Top}(X)^{\operatorname{op}}\to \cat{mBan}^M$ we need to see that 
        $\Fc(\operatorname{Res}:U\to V)$ is a measurable morphism. 
        
        We have \begin{align*}
            \Fc_m(U)&=(\Fc_{m,y})^{\im(\pi_1(U)\to \pi_1(X))}\\
            &=\big((\Fc_{m,y})^{\im(\pi_1(V)\to \pi_1(X))}\big)^{\im(\pi_1(U)\to \pi_1(X))} \\
            &=\Fc_m(V)^{\im(\pi_1(U)\to \pi_1(X))}
        \end{align*}
        so the restriction map 
        $\Fc_m(U)\to \Fc_m(V)$ is identified with the inclusion map of those vectors of $\Fc_m(V)$ that are fixed by all elements of $\im(\pi_1(U)\to \pi_1(X))$ into $\Fc_m(V)$. Since the fixed vectors, as shown earlier, are the intersection of the kernels of $\{\rho_m(T)-I\}$ where $T$ varies over a finite set of generators of this group, it follows that this is measurable.

        \item For $m\in M$, we set $d(m):=\operatorname{dim}(\Fc_{m,x_m}),$ so that for $U\subset X$ open 
        $$\Fc_m(U)=\begin{cases}
            \C^{d(m)},\quad \text{if } x_m\in U;\\
            0,\qquad\quad\ \text{otherwise}.
        \end{cases}$$

We define the association 
\begin{align*}
    \Fc:\cat{Top}(X)^{\operatorname{op}}&\to\cat{mBan}^M \\
    U&\mapsto\Big(\Fc_m(U),\{b_i(m)\}_{i\in\N}\Big)_{m\in M}
\end{align*}
where $b_i(m):=\begin{cases}
    e_i,\quad &x_m\in  U,\ i\le d(m)\\
    0, & \text{otherwise}.
\end{cases}$

We need to show that $\Fc_m(U)\in \cat{mBan}^M$. If $x_m\notin U$, then $\Fc_m(U)$ is the trivial abstract Banach bundle in $\cat{mBan}^M$. If $x_m\in U$, we need to make sure that, for any sequence $(a_i)$ in $\C$ such that all but finitely many $a_i$'s are zero, the function 
\begin{align*}
    X&\to\C \\
    m&\mapsto\Big|\Big|\sum_i a_i b_i(m)\Big|\Big|_{m}
\end{align*}
is measurable. Notice that 
$$\Big|\Big|\sum_i a_i b_i(m)\Big|\Big|_{m}=\Big|\Big|\sum_i a_i b_i(m)\Big|\Big|_{\C^{d(m)}}=\sqrt{\sum_{i=1}^{d(m)}|a_i|^2}$$
and the latter expression is measurable on account of $m\mapsto d(m)$ being measurable. Now, in order to show that $\Fc$ is a functor, we need to show that if $V\subseteq U$, then $\Fc(\operatorname{Res}:U\to V)=:\phi$ is a morphism in $\cat{mBan}^M$. Observe that $\phi$ is defined pointwise by 
$$\phi_m:=\begin{cases}
     \operatorname{Id}:\C^{d(m)}\to\C^{d(m)},\qquad &x_m\in V\\
     0, & x_m\not\in V.
\end{cases}$$

For $i,j\in \N$ the matrix coefficients of $\phi$ are
$$\phi_{i,j}(m)=\begin{cases}
    \delta_{i,j}\quad & x_m\in V\\0, & x_m\not\in V.
\end{cases}$$
In particular, $\phi$ is a step morphism (cf. Def. \ref{Def: morphisms}) with the decomposition 
\[M=\bigcup_{n=1}^{\infty}\big((P^{-1}(V)\cap d^{-1}(n)\big)\sqcup \big(P^{-1}(V)^c)\cap d^{-1}(n)\big).\]
    \end{enumerate}
\end{proof}

\subsubsection{Examples}

\begin{example}
    Let $M=\R,\ X=\R^2$ and $L=\C$. Denote by $\pi_X:\R^2\to\R$ the projection onto the first component. We define a functor $\mathcal{F}\in\cat{Top}(X)^{\operatorname{op}}$ by 
    $\mathcal{F}(U)=\{F_m(U)\}_{x\in\R}$, where 
    $$F_m(U)=\begin{cases}
        \C\quad\text{if } (m,0)\in U,\\
        0\quad\ \text{ otherwise}.
    \end{cases}$$
    Applying the measurable section functor yields 
    $$\int_\R^{\textrm{meas}} F_m(U) dm=\{ f:\pi_X(U)\to\C\ \text{measurable}\}.$$
    For $p\in[1,\infty]$, the direct integral of $L^p$-sections becomes 
    $$\int_R^{\oplus,p} F_m(U) dm=L^p_{\operatorname{loc}}(\pi_X(U)).$$
\end{example}
\begin{example}[Integral of skyscraper sheaves]
    Applying the measurable section functor yields 
    $$\int_M^{\textrm{meas}} \Fc_m(U) d\mu(m)=\{ f:\varphi^{-1}(U)\to \C\ \text{measurable}\}.$$
    For $p\in[1,\infty]$, the direct integral of $L^p$-sections becomes (after sheafification)
    $$\int_M^{\oplus,p} \Fc_m(U)d\mu(m)=L^p_{\operatorname{loc}}(\varphi^{-1}(U)).$$
\end{example}

We set up some notation for the next example.
Let $M$ be a Borel space, and let $L^2(M)$ be the set of $L^2$ functions on $M$. For a point $m\in M$, we let \[L_{\lim}^2(m):=\varinjlim_{\substack{ m\in U  \\ U\textrm{ open}}} L^2(U) \] be the equivalence classes of $L^2$ functions where $f\sim g$ if and only if there is a closed set $Z\not \ni m$ such that $f|_{M\setminus Z}=g|_{M\setminus Z}$ almost everywhere. If $M$ is not compact, we similarly let $L_{\lim}^2(\infty)$ be the equivalence classes of $L^2$-functions where two functions $f,g$ are equivalent if there is a compact subset $Z$ such that $f|_{M\setminus Z}=g|_{M\setminus Z}$.

More generally, whenever $S\subseteq M$ is some subset of $M$, we can define 
\[L_{\lim}^2(S):=\varinjlim_{\substack{ S\subseteq  U  \\ U\textrm{ open}}} L^2(U) \]

\begin{example}
     Let $M=\C$, $X=\Abb^2$. For each $m\in M$, let $\Fc_m$ be the rank 1 sheaf supported on $y=mx^2$. Then the sheaf 

\[\Fc^{\int}=\int^{\oplus}\Fc_mdm\]
has the following stalks: 
\[\Fc^{\int}_x=\begin{cases}L_{\lim}^2(y/x) & \textrm{if } x\neq 0 \\
L_{\lim}^2(\infty)^2 & \textrm{if } x=0,y\neq 0 \\
L^2(\C) & \textrm{if } (x,y)= 0
\end{cases}\]
\end{example}
    
\begin{example}
     Let $X=M=\C$, let $j_m:\C\setminus \{m\}\rightarrow \C$, and let  $\Fc_m=(j_m)_!\underline{\C}_{\C\setminus\{m\}}$ be the zero extension of the constant sheaf. Then 
\[\int^\oplus\Fc_m dm (U)= L^2[\C\setminus U] \]
and the stalk at $m\in X=\C$ is the space of $L^2$ functions that are identically a.e. zero in some neighbourhood of $m$. That is, the local functions in $\int^\oplus \Fc_m$ are precisely the functions that are zero near $m$.
\end{example}

\appendix
\section{Appendix: Competing notions of measurability}{\label{App: A}}

One of the technicalities in this work was choosing a suitable notion of measurable families of operators that works well with the algebraic properties we want. To do this, there are multiple choices one can make, as is already clear in the case where the families of Banach spaces $(B_x)$ are constant (say, each $B_x=B$). 
Here, families $(v_x)\in (B_x)$ coincide with functions $M\to B$ for which it is well known there are multiple notions of measurability, of which we list some important ones below. Suppose $f:M \to B$ is a function. Then:
\begin{description}
    \item[$f$ is norm measurable] if the preimage $f^{-1}(U)\subseteq M$ is measurable for every $U\subseteq B$ in the $\sigma$-algebra arising from the norm-topology;
    \item[$f$ is Bochner measurable] (also known as strongly measurable) if $f$ is a pointwise limit of step functions $f_n:M\to B$;
    \item[$f$ is weakly measurable] if the preimage $f^{-1}(U)\subseteq M$ is measurable for every $U\subseteq B$ in the $\sigma$-algebra arising from the weak topology.
\end{description}

The following facts are well known:

\begin{itemize}
    \item Bochner measurable $\implies $ Weakly measurable.
    \item If $B$ is separable, then Weakly measurable $\implies $ Norm measurable $\implies $ Bochner measurable.
    \item (\textbf{Pettis measurability theorem}) If $f$ is weakly measurable and the image $f(M)\subseteq B$ is (almost surely) separable, then $f$ is Bochner measurable.  
\end{itemize}

In our generalised context, there are generalisations of Bochner and weak measurability. Our notion of morphism in $\cat{mBan}^M_\omega$ (see Definition \ref{Def: morphisms}) agrees readily with the notion of Bochner measurability, and our notion of measurable sections (see Definition \ref{Def: measurable sections}) agrees readily with weak measurability, but it remains to be seen whether they are equivalent in appropriate circumstances. 

\begin{theorem}{\label{Thm: Separable measurablity}}
    Let $(B_x)$ be an abstract Banach bundle in $\cat{mBan}_{\omega}^M$, with $\omega=\N$. Then $(v_x)$ is a weakly measurable section if and only if it is Bochner measurable (that is, it is the pointwise limit of step families $(v_x^n)\in (B_x)$). 
\end{theorem}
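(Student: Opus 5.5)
The easy direction comes first. Suppose $(v_x)$ is a pointwise a.e.\ limit of step families $(v_x^n)$. Each coordinate $x\mapsto b_i^*(x)(v^n_x)$ is measurable, since on each piece of the countable decomposition the family is constant, and Lemma \ref{matrix coefficients are measurable maps} applies (equivalently, Lemma \ref{Lem: morphisms and Meas. families}). Norm convergence in $B_x$ implies convergence of $b_i^*(x)(v_x^n)\to b_i^*(x)(v_x)$, because $b_i^*(x)$ is continuous. So the coefficients of $(v_x)$ are a.e.\ limits of measurable functions and are therefore measurable. This gives Bochner $\Rightarrow$ weak.

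For weak $\Rightarrow$ Bochner, the plan is to reduce to Lemma \ref{Lem: Rapid convergence}. That lemma already produces step approximants with values in $B_x^n$ once one knows that the distance functions $d_n(x)=\operatorname{Dist}(v_x,B_x^n)$, or more simply the functions $x\mapsto\|v_x-w_x\|_x$ for $w\in S_Q$, are measurable. The key claim is therefore:
\begin{quote}
if $(v_x)$ is weakly measurable, then for every constant section $w=\sum_{i\le n} a_ib_i(x)$ with $(a_i)\in Q^{<\omega}$, the function $x\mapsto \|v_x-w_x\|_x$ is measurable.
\end{quote}
Given this claim, I take $d_n(x)=\inf_{w\in S_Q\cap B^n_x}\|v_x-w_x\|_x$, a countable infimum and hence measurable. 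I then choose $f_n$ measurably, with countable image, satisfying $\|v_x-f_n(x)\|_x<d_n(x)+\epsilon_n$, for instance by enumerating $S_Q\cap B^n$ and taking the first element that works. Completeness of $\{b_i(x)\}$ gives $d_n\to0$, and Lemma \ref{Lem: Rapid convergence} finishes.

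To prove the claim I use a norming countable family of functionals. By Hahn--Banach, $\|u\|_x=\sup|\phi(u)|$ over $\|\phi\|_{B_x^\vee}\le1$, and this supremum can be restricted to $\phi\in S^\vee_Q$ with $\|\phi\|_x\le 1$, as in the proof of the norm-measurability lemma. The reason is that $S_Q^\vee\cap\{\|\phi\|\le1\}$ is weak-$^*$ dense in the unit ball, and weak-$^*$ density suffices when evaluating against a fixed vector $u$. For a fixed $\phi=\sum_{j\le N}c_jb_j^*(x)$ with $c\in Q^{<\omega}$, the function $x\mapsto\phi_x(v_x-w_x)=\sum_j c_j\big(b_j^*(x)(v_x)-a_j\big)$ is measurable by weak measurability. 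The norm $\|\phi_x\|_x$ is measurable by the earlier lemma on constant families of functionals. So
\[
\|v_x-w_x\|_x=\sup_{\phi\in S^\vee_Q}\frac{|\phi_x(v_x-w_x)|}{\max(\|\phi_x\|_x,\,\cdot\,)}
\]
is handled by writing it as $\sup_{\phi}\,|\phi_x(v_x-w_x)|\cdot\mathds 1_{\{\|\phi_x\|_x\le1\}}$. This is a countable supremum of measurable functions and hence measurable.

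The main obstacle is the weak-$^*$ density step. It must hold uniformly enough in $x$ that the same countable index set $S^\vee_Q$ works at every $x$. This is fine pointwise, because for each fixed $x$ the restricted sup recovers the norm, and measurability only requires that the index set be countable and independent of $x$. I also have to check that the indicator cutoff does not lose the norm. Since $Q$ is dense, scaling $\phi$ by rationals slightly less than $1/\|\phi_x\|$ lands in the unit ball, so the supremum is still attained in the limit. For the non-Archimedean case I would invoke the corresponding Hahn--Banach theorem for spherically complete $\KK$, referenced in the second appendix.
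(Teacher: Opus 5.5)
Your architecture is the paper's. You get Bochner $\Rightarrow$ weak from continuity of the $b_i^*(x)$. For weak $\Rightarrow$ Bochner, you show that $x\mapsto\|v_x-w_x\|_x$ is measurable for the countably many $w\in S_Q$, using a countable supremum over $\Span_Q\{b_i^*(x)\}$. You then choose near-optimal approximants on a measurable partition; the paper minimises over a lattice $L^N\subseteq\KK^N$, and your first-fit choice feeding Lemma~\ref{Lem: Rapid convergence} is a tidier version of the same selection. Granting your key claim, everything else goes through.

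The gap is in the proof of the key claim, and it is exactly the step the paper's own proof also uses. The identity
\[
\|u\|_x=\sup\{|\phi(u)| : \phi\in S_Q^\vee,\ \|\phi\|_x\le 1\}
\]
is false for general M-bases. Totality of $\{b_i^*(x)\}$ only makes $\Span\{b_i^*(x)\}$ weak-$^*$ dense in $B_x^\vee$. It does not make its \emph{unit ball} weak-$^*$ dense in the unit ball of $B_x^\vee$. By the bipolar theorem, that stronger property is equivalent to the basis being $1$-norming, which Definition~\ref{Def: Markushevich Basis} does not demand.

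Concretely, take the constant bundle $c_0$ with the summing basis $b_m=e_1+\dots+e_m$, which is normalised and even a Schauder basis. Its biorthogonal functionals are $b_m^*=e_m^*-e_{m+1}^*$. Every $\phi\in\Span\{b_m^*\}\subseteq\ell^1$ has coordinate sum $0$, so $|\phi(e_1)|\le\tfrac12\|\phi\|$. Your supremum therefore returns at most $\tfrac12$ for $u=b_1$, whose norm is $1$. So the failure already happens at a single point and on finite spans. The uniformity in $x$ that you flagged as the main obstacle is not the issue. Your measurability of $\|\phi_x\|_x$ is fine, since that only needs a supremum over the norm-dense set $S_Q$ of \emph{vectors}.

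To repair this you need either an extra hypothesis or a different proof of the key claim. A sufficient hypothesis is that almost every $\{b_i(x)\}$ is $1$-norming; this holds for shrinking bases, hence for reflexive or Hilbert fibres.

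An argument that works in general goes as follows. Fix $w\in S_Q$ and $r>0$, and view each $u\in Q^{<\omega}$ as $\sum_i u_ib_i(x)\in B_x$. Then $\{x:\|v_x-w_x\|_x<r\}$ is the projection to $M$ of the set of $(x,(u_n))\in M\times (Q^{<\omega})^{\N}$ satisfying:
\[
\|u_n-u_{n+1}\|_x\le 2^{-n}\ \forall n,\qquad \lim_n b_i^*(x)(u_n)=b_i^*(x)(v_x)\ \forall i,\qquad \lim_n\|u_n-w\|_x<r.
\]
This set is product-measurable by condition~4 of Definition~\ref{Def: Objects} and weak measurability. Totality forces the limit of any such fast Cauchy sequence to be $v_x$. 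Either of two results then gives the measurability you need:
\begin{itemize}
\item the measurable projection theorem, for complete $\mu$;
\item for Borel data on Polish $M$, Souslin's theorem, since both this set and its analogue with $\ge r$ are analytic.
\end{itemize}
In the constant-bundle case there is a quicker route: apply the Lusin--Souslin theorem to the injective continuous coefficient map $B\to\KK^{\N}$.
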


\begin{proof}
That Bochner measurable implies weakly measurable is a special case of Lemma \ref{matrix coefficients are measurable maps}. 

    Let $w^N_x:=\sum_{i\leq N} b^*_i(v_x)b_i$. Each $w^N_x$ is weakly measurable as a function $M\to k^N$, and is therefore Bochner measurable, although it is not guaranteed to converge to $v_x$ as $N\to \infty$. 

    Let $\delta_N>0$ be some sequence of numbers that tends to zero, and let $L^N\subseteq k^N$ be a lattice, fine enough so that for every point $x\in k^N$, there is some lattice point $p\in L^N$ such that $\|x-p\|_{\ell^1}<\delta_N$.
    
    Consider the family $w^N_x-p-v_x$, with $p\in L^N$. Each family is weakly measurable by hypothesis, so the norm function 
    \[x\mapsto \|w_x^N-p-v_x\|_x=\sup_{\substack{\phi\in \Span_{\Q}(b_i^*(x))\\\|\phi\|\leq 1}}|\phi(w_x^N-p-v_x)|\] 
    is also measurable, being the supremum of countably many measurable functions. 
    Hence, it is clear that the function
    \begin{equation*}
        x\mapsto \min_{p\in L^N}\{\|w^N_x-p-v_x\|_x\}
    \end{equation*}
    is also measurable. 

    Put some order on the elements of $L^N=\{p_i\}_{i\in \N}$, define \[M'_i=\{x\in M: \textrm{The minimum }\min_{p\in L^N}\{\|w^N_x-p-v_x\|_x\} \textrm{ is attained at }p_i\},\]
    and set $M_i:=M'_i\setminus\bigcup_{j<i}M'_j$.

     Since $w_x^N-p_i$ is measurable as a function $M_i\to k^N$, it can be approximated to arbitrary degree by a step family. 
     Let $s^{N,i}_x$ be a step family on $M_i$ such that $\|s^{N,i}_x-(w_x^N-p_i)\|_x<\delta_N$. 
     For $x\in M_i$, we have the inequalities
\begin{align*}
    \|s^{N,i}_x-v_x\|_x &\leq \|s_x^{N,i}-(w^N_x-p)\|_x + \|w^N-p-\mu_N(v_x)\|_x+\|\mu_N(v_x)-v_x\|_x \\
    &<\delta_N+\|w^N-p-\mu_N(v_x)\|_{\ell^1}+\|\mu_N(v_x)-v_x\|_x \\
    &<2\delta_N +\operatorname{dist}(B^N_x,v_x).
\end{align*}
Combining these $(s_x^{N,i})_{x\in M_i}$ into one family $(s_x^N)_{x\in M}$, we have a step function such that for each $x\in M$, 
\[\|s^{N}_x-v_x\|_x<2\delta_N+\operatorname{dist}(B^N_x,v_x).\]
As, for fixed $x$, both quantities on the right vanish as $N\to\infty$, it follows that $(v_x)$ is the pointwise limit of a sequence of step families.  
\end{proof}

\section{Appendix: Archimedean and non-Archimedean functional analysis}{\label{App: B}}

In functional analysis, there are many deep analytical and geometric differences between the Archimedean and non-Archimedean cases, with surprising consequences. 
In this paper, we frequently rely on fundamental and deep results of functional analysis, which are well known in the Archimedean case and a little less well known in the non-Archimedean case. To this end, we list these required classical  theorems below, provide references for their non-Archimedean analogues, and, when necessary, point out the important additional conditions for the result to hold in the non-Archimedean case.

\begin{description}
    \item[Open mapping theorem] Proposition 8.6 \cite{Sch02}
    \item[Hahn--Banach] Proposition 9.2 \cite{Sch02}.  The field $\KK$ must be spherically complete. 
    \item[Banach--Alaoglu] 
    $V$ a Banach space and $V^\vee$ the dual space. Then
    the closed unit ball $B_{\leq 1}\subseteq V^\vee$ is compact in the weak-$^*$ topology.

    In the non-Archimedean setting, this result is Lemma 13.1(vi) of \cite{Sch02}, provided the non-Archimedean field $\KK$ is locally compact. (This is simply to force the notion of compactness to coincide with c-compactness).
    \item[Existence of Markushevich bases] 
    Every separable Archimedean Banach space has a Markushevich basis \cite{fabian2011banach}.
    Every separable non-Archimedean Banach space not only has an M-basis, it also has a Schauder basis and is consequently linearly homeomorphic to $c_0(\N)$.
    \item[Radon-Nikodym] We quote this result in a section where we only consider the Archimedean case, but it does have a non-Archimedean version, which one can find in Section 2, Theorem 4 of \cite{MR1874423}.
\end{description}

\bibliographystyle{alpha}
\bibliography{ref}
\end{document}